\numberwithin{equation}{section}
\newtheorem{theorem}{\textbf{Theorem}}[section]
\newtheorem{proposition}[theorem]{\textbf{Proposition}}
\newtheorem{lemma}[theorem]{\textbf{Lemma}}
\theoremstyle{definition}
\theoremstyle{remark}
\newtheorem{remark}[theorem]{\it{Remark}}
\def\e{\epsilon}
\def\R{\mathbb{R}}
\def\Rn{{\mathbb{R}}^n_+}
\def\d{\partial}
\def\fermi{\psi_i:B^+_{\delta}(0)\to M}
\def\fermilinha{\psi_i:B^+_{\delta'}(0)\to M}
\def\a{\alpha}
\def\ba{\begin{align}}
\def\ea{\end{align}}
\def\bp{\begin{proof}}
\def\ep{\end{proof}}
\renewcommand{\(}{\left(}
\renewcommand{\)}{\right)}
\begin{document}

\title{A positive mass theorem for non-smooth metrics on asymptotically flat manifolds with non-compact boundary}
\date{}

\author{\textsc{S\'ergio Almaraz}\footnote{Partially supported by grant 201.049/2022, FAPERJ/Brazil.}  \textsc{and Shaodong Wang}\footnote{Partially supported by NSFC 12001364 and the Fundamental Research Funds for the Central Universities, No.30924010839.}}

\maketitle

\begin{abstract}
On a smooth asymptotically flat Riemannian manifold with non-compact  boundary, we prove a positive mass theorem for metrics which are only continuous across a compact hypersurface. As an application, we obtain a positive mass theorem on manifolds with non-compact corners. 

\end{abstract}

\noindent\textbf{Keywords:} positive mass theorem, Riemannian manifold with boundary, non-smooth metric, manifold with corners.

\noindent\textbf{Mathematics Subject Classification 2020:} 53C21, 83C40.


\section{Introduction}
\label{sec:intro}

In \cite{arnowitt-deser-misner}, Arnowitt, Deser and Misner identified a key quantity in the General Relativity theory which stands for the total mass of an isolated gravitational system. Mathematically, this system is represented by a Riemannian manifold asymptotically modelled on the Euclidean space $\mathbb R^n$. That mass quantity, named the {\it{ADM mass}} after the authors, was later proved by Bartnik in \cite{bartnik} to be independent of the chosen asymptotic coordinates.

The non-negativity of the ADM mass in case the physically isolated system is time-symmetric when embedded in a Lorentzian space-time is known as the {\it{Riemannian positive mass theorem}} and was proved by Schoen and Yau in \cite{ schoen2, schoen-yau} in dimensions $3\leq n\leq 7$, and by Witten in \cite{witten} in any dimensions $n\geq 3$ assuming the manifold is spin.     
The results in \cite{schoen2, schoen-yau} played a crucial role in the proof of the remaining cases of the Yamabe problem by Schoen in \cite{schoen1}.
Extensions to high dimensions of the results in \cite{schoen2, schoen-yau} were obtained by Schoen and Yau in \cite{schoen-yau3}.  

The formulation of the Yamabe problem on manifolds with boundary by Escobar in \cite{escobar2, escobar3} motivated a version of the ADM mass and a corresponding Riemannian positive mass theorem for manifolds asymptotically modelled on the Euclidean half-space 
$$\mathbb R^n_+=\{x=(x_1,...,x_n)\in\mathbb R^n\: ;\:x_1\geq 0\}$$ 
in \cite{almaraz-barbosa-lima}. Our goal in this paper is to extend the main result in \cite{almaraz-barbosa-lima} to the case where the Riemannian metric is continuous but not smooth across a compact hypersurface. This is inspired by a similar result for the ADM mass established by Miao in \cite{miao} (see also \cite{lee-lefloch}). 
As an application, we prove a positive mass theorem for manifolds asymptotically modelled on the Euclidean quarter-space 
$$\mathbb R^n_{\mathbb L}=\{x\in\mathbb R^n\: ;\: x_1\geq 0,\, x_n\geq 0\}$$ 
where a definition of a mass quantity is introduced in \cite{almaraz-lima-mckeown}.

Let $M$ be an oriented smooth differentiable manifold with boundary $\d M$, with dimension $n\geq 3$ (where the classical positive mass theorem holds, by \cite{schoen2, schoen-yau, schoen-yau3}). Let $\Sigma\subset M$ be a compact hypersurface with boundary $\partial \Sigma = \Sigma\cap \partial M$ such that 

\begin{itemize}
\item $\Sigma$ is transversal to $\partial M$;
\item $\Sigma$ divides $M$ into two open subsets $\Omega$ and $M\backslash \overline \Omega$ of $M$ such that the closure $\overline \Omega$ is compact.
\end{itemize}
In particular, $\partial\overline\Omega=\Sigma\cup (\partial M\cap \Omega)$.
Let $g$ be a continuous Riemannian metric on $M$ satisfying
\begin{itemize}
\item $g_-:=g|_{\overline\Omega}$ is $C^{1,\alpha}(\Omega)\cap C^{2,\alpha}(\text{int}\,\Omega)$;
\item $g_+:=g|_{M\backslash\Omega}$ is $C^{1,\alpha}(M\backslash \overline\Omega)\cap C^{2,\alpha}(\text{int}\,(M\backslash \overline\Omega))$;
\item $g_-$ and $g_+$ are $C^2$ up to $\Sigma\backslash \partial\Sigma$,
\end{itemize}
for some $0<\alpha<1$. Moreover, let $\eta_g$ be the outward pointing unit normal vector to $\partial M$ and consider the mean curvatures $H_{g_-}^{\partial M}=\text{div}_{g_-}\eta_g$ and $H_{g_+}^{\partial M}=\text{div}_{g_+}\eta_g$. Let $R_{g_-}$ and $R_{g_+}$ stand for the scalar curvatures of $g_-$ and $g_+$ respectively.

Assume that $(M,g)$ is asymptotically flat with non-compact boundary in the sense of \cite{almaraz-barbosa-lima}. Namely, there exists a compact set $K\subset M$, which we will assume to contain $\Omega$, such that $M\backslash K$ is diffeomorphic to $\mathbb R^n_+$ minus a half-ball and, in the induced coordinates, $g$ satisfies 
\begin{equation}\label{decay:g}
\sum_{a,b,c,d=1}^{n}\left( 
|g_{ab}(x)-\delta_{ab}|+|x||g_{ab,c}(x)|+|x|^2|g_{ab,cd}(x)|
\right)
\leq C|x|^{-\tau},
\end{equation}
for some $C>0$ and $\tau>(n-2)/2$. Here, the comas stand for partial derivatives and 
$$
\delta_{ab}=
\begin{cases}
1 &a=b,
\\
0 &a\neq b.
\end{cases}
$$
Moreover, $R_{g_+}$ and $H_{g_+}^{\partial M}$ are integrable on $M\backslash\overline\Omega$ and $\partial M\backslash\overline\Omega$ respectively. 
We define the mass of $(M,g)$ using the same expression in  \cite{almaraz-barbosa-lima}, namely,
$$
m_{\partial M}(g)=\lim_{\rho\to\infty}
\left(
\sum_{a,b=1}^n\int_{|x|=\rho,  x_1\geq 0} \left((g_+)_{ab,b}-(g_+)_{bb,a}\right)\frac{x_a}{|x|}
+\sum_{i=1}^{n-1}\int_{|x|=\rho, \,x_1= 0} (g_+)_{in}\frac{x_i}{|x|}
\right)
$$
where we are omitting the area elements induced by the Euclidean metric. Our first main result is the following:

\begin{theorem}\label{main:thm}
Let $(M,g)$ be as above\footnote{Apart from the non-smoothness of the metric $g$ across the compact hypersurface $\Sigma$, the hypothesis on the regularity of the metric $g$ away from $\Sigma$ is weaker in the statement of Theorem \ref{main:thm} when compared to Theorem 1.3 in \cite{almaraz-barbosa-lima}. The reason is that the assumption $g\in C^\infty(M)$ made in \cite{almaraz-barbosa-lima} is unnecessary as the proofs in that paper only require $g\in C^{1,\alpha}(M)\cap C^{2,\alpha}(M\backslash \partial M)$.} and let $\nu_g$ be the unit normal vector to $\Sigma$ pointing towards the unbounded region $M\backslash\overline\Omega$ ((see Figure \ref{M})).  
Suppose that $R_{g_-}$, $R_{g_+}$, $H_{g_-}^{\partial M}$ and $H_{g_+}^{\partial M}$ are non-negative. 
Assume also that
$$
\Sigma\perp\partial M
\qquad\text{and}\qquad 
H_{g_-}^{\Sigma}\geq H_{g_+}^{\Sigma},
$$
where $H_{g_-}^{\Sigma}=\text{div}_{g_-}\nu_g$ and $H_{g_+}^{\Sigma}=\text{div}_{g_+}\nu_g$ are the mean curvatures on $\Sigma$ with respect to the regions $\overline\Omega$ and $M\backslash\Omega$ respectively. 
Then
$$m_{\partial M}(g)\geq 0.$$
If we further assume that $H_{g_-}^{\Sigma} > H_{g_+}^{\Sigma}$ at some point on $\Sigma$, then $m_{\partial M}(g)>0$.
\end{theorem}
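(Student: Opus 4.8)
The plan is to deduce the theorem from the smooth positive mass theorem of \cite{almaraz-barbosa-lima} by an approximation argument in the spirit of Miao~\cite{miao}: one mollifies $g$ across $\Sigma$, conformally deforms the mollified metric so that it becomes scalar flat with minimal boundary, applies the smooth theorem, and then lets the mollification parameter tend to zero. Concretely, for small $\delta>0$ I would construct a metric $g_\delta$ on $M$ that coincides with $g$ outside a $\delta$-neighbourhood $U_\delta$ of $\Sigma$, converges to $g$ uniformly, and near $\Sigma\setminus\partial\Sigma$ is obtained by convolving $g$ in directions adapted to $\Sigma$ with a fixed non-negative mollifier; near the corner $\partial\Sigma$ one uses the hypothesis $\Sigma\perp\partial M$ to work in Fermi-type coordinates in which $\partial M$ and $\Sigma$ are simultaneously coordinate hyperplanes, so that the mollification is carried out in directions tangent to $\partial M$ and is compatible with the boundary. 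Since $g$ is \emph{continuous} across $\Sigma$, the induced metrics on $\Sigma$ from the two sides agree, so only the normal derivative of $g$ jumps; a computation of $R_{g_\delta}$ in these coordinates shows that its leading term (of order $\delta^{-1}$) is the mollifier profile times a positive multiple of $H^{\Sigma}_{g_-}-H^{\Sigma}_{g_+}$. Hence, using $R_{g_\pm}\ge 0$ and $H^{\Sigma}_{g_-}\ge H^{\Sigma}_{g_+}$, the measures $R_{g_\delta}\,dV_{g_\delta}$ converge weakly, as $\delta\to0$, to the non-negative measure $\mu$ which equals $R_g\,dV_g$ on $M\setminus\Sigma$ plus $c_0\,(H^{\Sigma}_{g_-}-H^{\Sigma}_{g_+})\,d\sigma_\Sigma$ on $\Sigma$, with $c_0>0$; moreover $\int_{U_\delta}|R_{g_\delta}|\,dV_{g_\delta}$ stays bounded while $R_{g_\delta}^-$ tends to $0$ in $L^p(M)$ for every $p<\infty$. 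The analogous, and easier, analysis on $\partial M$ (only first derivatives of $g$ enter $H^{\partial M}$) shows that $H^{\partial M}_{g_\delta}$ stays bounded, $H^{\partial M}_{g_\delta}\,d\sigma_{g_\delta}\to H^{\partial M}_g\,d\sigma_g$, and $(H^{\partial M}_{g_\delta})^-\to0$ in $L^p(\partial M)$.

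Next I would solve, for $\delta$ small, on the asymptotically flat manifold with boundary $(M,g_\delta)$ the linear problem
\begin{equation*}
-\tfrac{4(n-1)}{n-2}\Delta_{g_\delta}u_\delta+R_{g_\delta}u_\delta=0\ \ \text{in}\ M,\qquad
\tfrac{2(n-1)}{n-2}\,\partial_{\eta_g}u_\delta+H^{\partial M}_{g_\delta}u_\delta=0\ \ \text{on}\ \partial M,\qquad u_\delta\to1\ \ \text{at infinity}.
\end{equation*}
This is a Fredholm problem of index $0$ on the standard weighted spaces; the smallness of the negative parts of $R_{g_\delta}$ and $H^{\partial M}_{g_\delta}$ makes the associated quadratic form $\frac{4(n-1)}{n-2}\int_M|\nabla u|^2+\int_M R_{g_\delta}u^2+2\int_{\partial M}H^{\partial M}_{g_\delta}u^2$ positive (via the Sobolev and trace inequalities on $(M,g_\delta)$), so the problem has a unique solution, and the same smallness yields a maximum principle forcing $u_\delta>0$. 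Comparing $u_\delta$ with the solution of the corresponding homogeneous problem for $g$ (a supersolution comparison using $R_g\ge0$, $H^{\partial M}_g\ge0$) gives a uniform bound $0<c_1\le u_\delta\le C$; together with weighted elliptic estimates this yields $u_\delta\to u_0$ with $\inf_M u_0>0$. Setting $\tilde g_\delta:=u_\delta^{4/(n-2)}g_\delta$, the manifold $(M,\tilde g_\delta)$ is asymptotically flat with non-compact boundary, has the regularity required in \cite{almaraz-barbosa-lima}, and satisfies $R_{\tilde g_\delta}\equiv0$ and $H^{\partial M}_{\tilde g_\delta}\equiv0$.

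To conclude, I would compare masses. Since $g_\delta=g$ near infinity, $m_{\partial M}(g_\delta)=m_{\partial M}(g)$, and the standard conformal computation gives $m_{\partial M}(\tilde g_\delta)=m_{\partial M}(g)-\tfrac{4(n-1)}{n-2}\Phi_\delta$, where $\Phi_\delta:=\lim_{\rho\to\infty}\int_{|x|=\rho,\,x_1\ge0}\partial_r(u_\delta-1)$ is finite because $R_g$ is integrable near infinity. Applying the divergence theorem to $u_\delta$ and using the equation and boundary condition above, $\tfrac{4(n-1)}{n-2}\Phi_\delta=\int_M R_{g_\delta}u_\delta\,dV_{g_\delta}+2\int_{\partial M}H^{\partial M}_{g_\delta}u_\delta\,d\sigma_{g_\delta}$, hence
\begin{equation*}
m_{\partial M}(g)=m_{\partial M}(\tilde g_\delta)+\int_M R_{g_\delta}u_\delta\,dV_{g_\delta}+2\int_{\partial M}H^{\partial M}_{g_\delta}u_\delta\,d\sigma_{g_\delta}\qquad\text{for every small }\delta .
\end{equation*}
By the smooth positive mass theorem (Theorem~1.3 of \cite{almaraz-barbosa-lima}, in the weaker-regularity form noted in the footnote) applied to $(M,\tilde g_\delta)$ one has $m_{\partial M}(\tilde g_\delta)\ge0$. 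Using $u_\delta\ge c_1>0$ one gets $\int_{\partial M}H^{\partial M}_{g_\delta}u_\delta\,d\sigma_{g_\delta}\ge -C\,\|(H^{\partial M}_{g_\delta})^-\|_{L^1}\to0$, and writing $R_{g_\delta}=R_{g_\delta}^+-R_{g_\delta}^-$ and using $\int_M R_{g_\delta}\,dV_{g_\delta}\to\mu(M)$ together with $R_{g_\delta}^-\to0$ in $L^1$, $\int_M R_{g_\delta}u_\delta\,dV_{g_\delta}\ge c_1\,\mu(M)-o(1)$. Therefore $m_{\partial M}(g)\ge c_1\,\mu(M)-o(1)$ for every small $\delta$, and letting $\delta\to0$, $m_{\partial M}(g)\ge c_1\,\mu(M)\ge0$. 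Finally, if $H^{\Sigma}_{g_-}>H^{\Sigma}_{g_+}$ at some point of $\Sigma$, then the corner part of $\mu$ is non-trivial, so $\mu(M)>0$ and hence $m_{\partial M}(g)>0$.

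The crux of the argument, and what I expect to be the main obstacle, is the mollification estimate of the first step: $R_{g_\delta}$ blows up like $\delta^{-1}$ near $\Sigma$, and one must show that its negative part nonetheless vanishes in the limit. This rests on a careful expansion in $\Sigma$-adapted coordinates, crucially exploiting the continuity of $g$ across $\Sigma$ (so that tangential derivatives match and only the normal one jumps) and the sign hypothesis $H^{\Sigma}_{g_-}\ge H^{\Sigma}_{g_+}$; the orthogonality $\Sigma\perp\partial M$ enters precisely to make this construction compatible with $\partial M$ near the codimension-two corner $\partial\Sigma$ and to keep $H^{\partial M}_{g_\delta}$ controlled there. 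A secondary difficulty is that $R_{g_\delta}$ is only bounded in $L^1$, not in any $L^p$ with $p>1$, so the solvability and the limiting behaviour of $u_\delta$ must be handled through weighted spaces and the near-positive maximum principle rather than by naive elliptic regularity.
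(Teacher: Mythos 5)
Your overall strategy---mollify $g$ across $\Sigma$ using $\Sigma\perp\partial M$ (via the vector field tangent to $\partial M$ and normal to $\Sigma$), conformally deform $g_\delta$, apply the smooth theorem of \cite{almaraz-barbosa-lima}, and pass to the limit---is the same as the paper's, and your analysis of the mollification (boundedness of $H^{\partial M}_{g_\delta}$, a concentrating non-negative spike in $R_{g_\delta}$ proportional to $H^\Sigma_{g_-}-H^\Sigma_{g_+}$, smallness of the negative parts) matches Proposition \ref{propo:control:curv}.

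Where you diverge is in the conformal step, and this is where there is a genuine gap. You propose to solve for $u_\delta$ using the \emph{full} $R_{g_\delta}$ and $H^{\partial M}_{g_\delta}$ so that $\widetilde g_\delta$ is scalar flat with minimal boundary, and you then need uniform two-sided bounds $0<c_1\le u_\delta\le C$. Your justification---``comparing $u_\delta$ with the solution of the corresponding homogeneous problem for $g$, a supersolution comparison using $R_g\ge 0$, $H^{\partial M}_g\ge 0$''---is not a valid argument as stated: the two equations have different metrics $g_\delta$ versus $g$ and different (discontinuous, $\delta$-dependent, unbounded-as-$\delta\to0$) potentials, so the maximum principle does not give a direct comparison between their solutions. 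In particular, $v\equiv 1$ is not a solution of the $g$-problem unless $R_g\equiv 0$ and $H^{\partial M}_g\equiv 0$. Obtaining a uniform positive lower bound on $u_\delta$ (or on the corresponding energy term) in the presence of a potential that blows up like $\delta^{-2}$ on a $\delta^2$-neighbourhood of $\Sigma$ is the genuinely delicate point; Miao handles the analogous step via a contradiction argument (his Proposition 4.2), which is a real piece of analysis and not a simple comparison.

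The paper avoids this difficulty entirely for the non-negativity part by choosing the conformal factor differently: in Proposition \ref{propo:lim:u} one solves
\[
\Delta_{g_\delta}u_\delta+c_nR^{-}_{g_\delta}u_\delta=0,\qquad
\partial u_\delta/\partial\eta_{g_\delta}-2c_nH^{-}_{g_\delta}u_\delta=0,
\]
using only the \emph{negative} parts. These vanish in $L^p$ for every $p$ as $\delta\to0$, so $u_\delta=1+w_\delta$ with $\|w_\delta\|_{L^\infty}\to0$; there is no lower-bound issue, and the resulting $\widetilde g_\delta$ has $R_{\widetilde g_\delta}=u_\delta^{-4/(n-2)}R^+_{g_\delta}\ge 0$ and $H^{\partial M}_{\widetilde g_\delta}\ge 0$ (not zero). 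One then shows $m_{\partial M}(\widetilde g_\delta)\to m_{\partial M}(g)$ (Proposition \ref{propo:conv:mass}) via the same integration-by-parts identity you wrote, but with $R^-_{g_\delta}$, $H^-_{g_\delta}$ in place of $R_{g_\delta}$, $H_{g_\delta}$, so all terms go to zero cleanly. The harder one-step deformation (full $R_{g_\delta}$, zero target curvatures) is reserved by the paper for the strict-positivity statement in Subsection \ref{subsec:positive}, and even there it is applied to $\widetilde g_\delta$, which already has non-negative curvatures, so the maximum principle applies directly. In short: your route can be made to work, but you should use the negative-part deformation for non-negativity (which avoids the uniform lower bound on $u_\delta$ altogether), and for strict positivity you must replace the supersolution comparison by a Miao-type contradiction argument; as written, the lower-bound step is unjustified.

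Two smaller points: your claim that $R_{g_\delta}$ is bounded only in $L^1$ is a problem you create by working with the full scalar curvature; the paper's $R^{-}_{g_\delta}$ is bounded in $L^\infty$ with support of measure $O(\delta)$, hence tends to zero in every $L^p$, which is what makes the operator $T_\delta$ converge to $T$ in operator norm. Also, the weak measure convergence $R_{g_\delta}\,dV_{g_\delta}\rightharpoonup\mu$ you invoke is not needed for the non-negativity part; the simpler statement $R^-_{g_\delta}\to 0$ in $L^1$ together with $m_{\partial M}(\widetilde g_\delta)\to m_{\partial M}(g)$ suffices.
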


\begin{figure}
	\includegraphics[width=\linewidth]{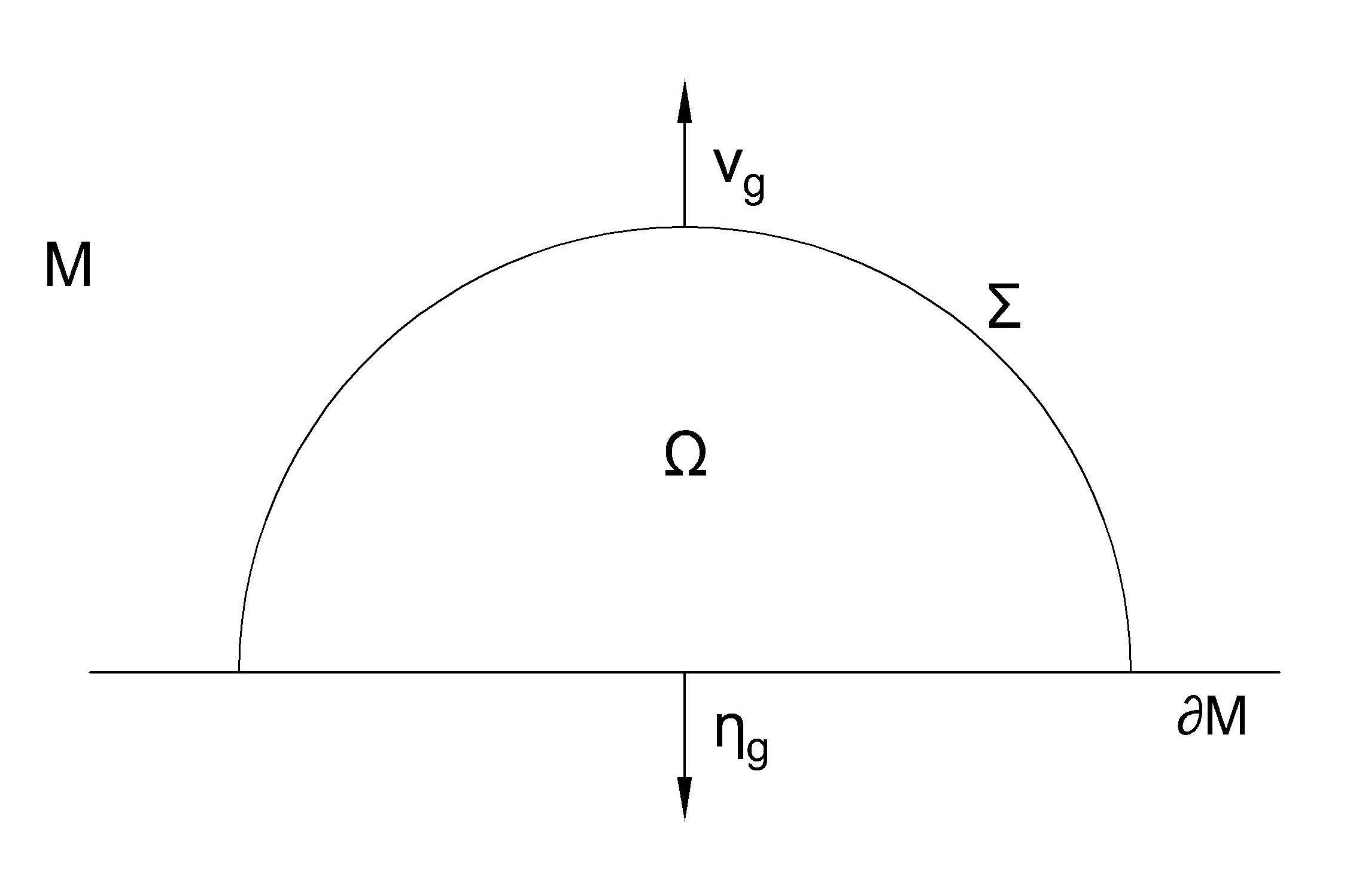}
	\caption{The region $\Omega$.}
	\label{M}
\end{figure}

Theorem \ref{main:thm} extends the positive mass theorem in \cite{almaraz-barbosa-lima}, where the case of smooth metrics is handled. As mentioned before, it is highly inspired by the results of Miao in \cite{miao} which studies the positivity of the classical ADM mass on manifolds without boundary.

The non-negativity of $R_{g_-}$, $R_{g_+}$, $H_{g_-}^{\partial M}$ and $H_{g_+}^{\partial M}$ can be viewed as dominant energy conditions for the system determined by the asymptotically flat manifold $(M,g)$. While the non-negativity of the scalar curvature is the classical dominant energy condition in General Relativity, the condition on the boundary mean curvature is the right one to be added in the presence of a non-empty boundary as observed in \cite{almaraz-lima-mari}.

An heuristic argument given in \cite[Section 2]{miao} suggests that a jump from $H^\Sigma_{g_-}$ to $H^\Sigma_{g_+}$ across $\Sigma$ (see  equation \eqref{eq:jump}) makes the scalar curvature $R_g$ to behave as a dirac delta function with support in $\Sigma$. Hence, the condition $H_{g_-}^{\Sigma}\geq H_{g_+}^{\Sigma}$ is, in some sense, related to the non-negativity of $R_g$.

The hypothesis $\Sigma\perp \partial M$ is required mainly in two parts of the manuscript. 
The first one is to mollify the metric $g$, which is only continuous across $\Sigma$, to obtain a smooth family of metrics in a small neighbourhood. 
That mollification, carried out in Section 2, is done along the integral curves of $\xi$, a vector-field near $\Sigma$ obtained in Proposition \ref{propo:vectorfield}, which is at the same time orthogonal to $\Sigma$ and tangent to $\partial M$.

The second part where  $\Sigma\perp \partial M$  is used is in the application of Theorem \ref{main:thm} to the proof of a positive mass theorem on manifolds asymptotically modelled on $\mathbb R^n_{\mathbb L}$ following the mass definition in \cite{almaraz-lima-mckeown}. This is our second main result which is the content of Theorem \ref{appl:thm} below and is done by means of a doubling argument which turns the manifold, originally modelled on $\mathbb R^n_{\mathbb L}$, into one modelled on $\mathbb R^n_+$, allowing us to use Theorem \ref{main:thm}. 

\begin{theorem}\label{appl:thm}
Let $(M,\Sigma_1,\Sigma_2, g)$ be an asymptotically flat cornered space as defined in Section \ref{sec:application}.
Let $\eta_1$ and $\eta_2$ be the outward pointing unit normal vectors to $\Sigma_1$ and $\Sigma_2$ respectively and consider the mean curvatures 
$$
H_g^{\Sigma_1}=\text{div}_g\eta_1\qquad\text{and}\qquad H_g^{\Sigma_2}=\text{div}_g\eta_2.
$$
If
$$
\Sigma_1\perp\Sigma_2,
\qquad
R_g\geq 0,
\qquad
H_g^{\Sigma_1}\geq 0,
\qquad\text{and}\qquad
H_g^{\Sigma_2}\geq 0,
$$ 
then the mass $m_{\Sigma_1,\Sigma_2}(g)$, defined by formula \eqref{mass:corner}, is non-negative.
\end{theorem}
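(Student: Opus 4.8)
The plan is to reduce the cornered positive mass theorem to Theorem \ref{main:thm} by a doubling argument across one of the two boundary hypersurfaces, say $\Sigma_2$. Concretely, I would take two isometric copies of $(M,g)$ and glue them along $\Sigma_2$ to form a new manifold $\widetilde M = M \cup_{\Sigma_2} M$. Because the original model at infinity is the quarter-space $\mathbb R^n_{\mathbb L} = \{x_1 \geq 0,\, x_n \geq 0\}$ and we double across the face $\{x_n = 0\}$, the doubled space is asymptotically modelled on the half-space $\mathbb R^n_+ = \{x_1 \geq 0\}$, so it is an asymptotically flat manifold with non-compact boundary $\widetilde{\partial M} = \partial M \cup_{\partial \Sigma_2} \partial M$ in the sense of \cite{almaraz-barbosa-lima}. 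The image of $\Sigma_2$ inside $\widetilde M$ becomes the compact hypersurface $\Sigma$ across which the doubled metric $\widetilde g$ is continuous but generally only Lipschitz: this is exactly the non-smooth situation Theorem \ref{main:thm} is designed to handle. The region $\Omega$ playing the role in Theorem \ref{main:thm} is a compact neighbourhood of $\Sigma$ in $\widetilde M$ meeting $\widetilde{\partial M}$; one arranges $K$ (hence $\Omega$) to sit away from the asymptotic region so the decay hypothesis \eqref{decay:g} carries over unchanged from $(M,g)$.

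Next I would verify the hypotheses of Theorem \ref{main:thm} for $(\widetilde M, \widetilde g)$. The scalar curvature of $\widetilde g$ on each copy equals $R_g \geq 0$, so $R_{\widetilde g_\pm} \geq 0$. The boundary $\widetilde{\partial M}$ is the double of the face $\Sigma_1$ across $\partial \Sigma_2 = \Sigma_1 \cap \Sigma_2$; since $\Sigma_1 \perp \Sigma_2$ by hypothesis, this gluing is smooth enough that $\Sigma_1$ meets $\Sigma_2$ orthogonally, guaranteeing both $\widetilde\Sigma \perp \widetilde{\partial M}$ (required in Theorem \ref{main:thm}) and that the mean curvature $H^{\widetilde{\partial M}}_{\widetilde g_\pm}$ on each side equals $H^{\Sigma_1}_g \geq 0$. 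The key point is the mean-curvature jump condition across $\Sigma = \Sigma_2$: by the symmetry of the doubling construction, the unit normal to $\Sigma_2$ in one copy is the negative of the unit normal in the other copy, so the second fundamental form of $\Sigma$ computed from the two sides are negatives of each other, whence $H^{\Sigma}_{\widetilde g_-} = -H^{\Sigma}_{\widetilde g_+} = H^{\Sigma_2}_g \geq 0 \geq -H^{\Sigma_2}_g$; thus $H^{\Sigma}_{\widetilde g_-} \geq H^{\Sigma}_{\widetilde g_+}$ holds (with equality precisely when $H^{\Sigma_2}_g \equiv 0$). Finally, the integrability of $R_{\widetilde g_+}$ and $H^{\widetilde{\partial M}}_{\widetilde g_+}$ at infinity follows from the corresponding hypothesis in the definition of an asymptotically flat cornered space.

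With all hypotheses checked, Theorem \ref{main:thm} yields $m_{\partial \widetilde M}(\widetilde g) \geq 0$. It then remains to relate $m_{\partial \widetilde M}(\widetilde g)$ to the cornered mass $m_{\Sigma_1,\Sigma_2}(g)$ defined by \eqref{mass:corner}. In the asymptotic coordinates, integrating over a hemisphere $\{|x| = \rho,\, x_1 \geq 0\}$ in $\widetilde M$ splits into two congruent integrals over the quarter-sphere $\{|x| = \rho,\, x_1 \geq 0,\, x_n \geq 0\}$ coming from the two copies of $M$, and similarly the boundary integral over $\{|x| = \rho,\, x_1 = 0\}$ splits; matching the ADM-type integrands with the terms in \eqref{mass:corner} one gets $m_{\partial \widetilde M}(\widetilde g) = 2\, m_{\Sigma_1,\Sigma_2}(g)$, so $m_{\Sigma_1,\Sigma_2}(g) \geq 0$ as claimed.

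I expect the main obstacle to be the last step: carefully verifying that the doubling identifications are compatible with the asymptotic coordinate charts so that the decay \eqref{decay:g} is genuinely preserved for $\widetilde g$, and bookkeeping the boundary and corner integrals so that the factor-of-two identification $m_{\partial \widetilde M}(\widetilde g) = 2\, m_{\Sigma_1,\Sigma_2}(g)$ comes out exactly — including checking that the extra normal-reflection terms across the doubling locus contribute nothing in the limit $\rho \to \infty$. A secondary technical point is confirming that the orthogonality $\Sigma_1 \perp \Sigma_2$ is exactly what is needed to make $\widetilde g$ a well-defined $C^0$ metric that is $C^{1,\alpha}$, resp. $C^{2,\alpha}$, on each side up to $\widetilde{\partial M}$ away from the corner, matching the regularity demanded in Theorem \ref{main:thm}.
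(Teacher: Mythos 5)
Your overall strategy of doubling across $\Sigma_2$ and reducing to Theorem \ref{main:thm} is the correct one and does match the paper's, but there is a genuine gap: you omit the conformal-flattening step (the paper's Proposition \ref{prop:conf:flat}), and without it the reduction does not work. If you double $(M,g)$ directly, the doubled metric $\widetilde g$ will generically fail to be $C^{1,\alpha}$ across \emph{all} of $\Sigma_2$, which is a \emph{non-compact} hypersurface stretching to infinity. But Theorem \ref{main:thm} requires (i) the non-smoothness to be confined to a \emph{compact} hypersurface $\Sigma$ with compact $\overline\Omega$, (ii) the asymptotic decay \eqref{decay:g} with second derivatives of $\widetilde g$ controlled at infinity, and (iii) integrability of $R_{\widetilde g_+}$ and $H^{\widetilde{\partial M}}_{\widetilde g_+}$ at infinity. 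A directly doubled metric has a singular set ($\Sigma_2$) running out to infinity, so none of these is available: you cannot take $\Sigma = \Sigma_2$ (it is not compact), and you cannot choose $\Omega$ away from the asymptotic region because the non-smoothness is not confined to any compact set. Asserting that $\widetilde g$ is ``only Lipschitz'' across $\Sigma$ and calling $\Sigma$ compact is exactly where the argument breaks.

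The paper fixes this by first replacing $g$ with a nearby metric $g_\epsilon$ that is conformally flat outside a compact set $K$ and has $R_{g_\epsilon}=H^{\Sigma_1}_{g_\epsilon}=H^{\Sigma_2}_{g_\epsilon}=0$ there, with $|m_{\Sigma_1,\Sigma_2}(g_\epsilon)-m_{\Sigma_1,\Sigma_2}(g)|\leq\epsilon$ (Proposition \ref{prop:conf:flat}). Since $\Sigma_1,\Sigma_2$ are umbilic in the Euclidean coordinates and now have vanishing mean curvature outside $K$, they are totally geodesic there, which is precisely what makes the doubled metric $\widetilde g_\epsilon$ genuinely $C^{2,\alpha}$ across $\Sigma_2$ outside $K$: the non-smooth locus is confined to the compact set $\Sigma_2\cap K$, which one then extends to the compact hypersurface $\Sigma\perp\partial\widetilde M$ required by Theorem \ref{main:thm}. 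The flattening also makes the boundary terms in \eqref{mass:corner} vanish, yielding the clean identity $m_{\partial\widetilde M}(\widetilde g_\epsilon)=2\,m_{\Sigma_1,\Sigma_2}(g_\epsilon)$ without having to chase reflection corrections in the asymptotic boundary integrals — a relation you cannot obtain directly for $g$, since the $\Sigma_2$-boundary term in \eqref{mass:corner} has no counterpart in $m_{\partial\widetilde M}$. Finally $\epsilon\to 0$ gives the theorem. Your mean-curvature jump computation $H^\Sigma_{\widetilde g_-}=H^{\Sigma_2}_g=-H^\Sigma_{\widetilde g_+}$ and the $\widetilde\Sigma\perp\partial\widetilde M$ observation coming from $\Sigma_1\perp\Sigma_2$ are correct and agree with the paper, but they are applied to $g$ rather than the flattened $g_\epsilon$, so the rest of the proof cannot go through as written.
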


The proof of Theorem \ref{main:thm} involves an approximation argument and conformal changes of metric  in order to apply the positive mass theorem in \cite{almaraz-barbosa-lima}. We first approximate the only continuous metric by some smooth metrics in a tubular neighbourhood of $\Sigma$. Direct calculations show that the scalar curvatures are bounded apart from a jump of the mean curvature on $\Sigma$, and the mean curvatures on $\partial M$ are bounded as well. We then use a conformal transformation of metric to restore non-negativity of those curvatures, which allows us to apply the positive mass theorem in \cite{almaraz-barbosa-lima} for case of smooth metrics on manifolds with non-compact boundary. Finally the non-negativity of the mass comes from the convergence of the metrics. 

This paper is organized as follows. In Section \ref{sec:smooth}, we mollify our original metric to obtain the approximating family of smooth metrics, mentioned in the previous paragraph, and derive some estimates of their scalar and boundary mean curvatures. These estimates are then used in Section \ref{sec:pf:main:thm} to obtain a family of smooth metrics with non-negative scalar and boundary mean curvatures via conformal transformations. We then present the proof of Theorem \ref{main:thm} in the same section.  In Section \ref{sec:application} of the paper, we present the proof of Theorem \ref{appl:thm}. Finally in the appendix, we collect some technical results used in Section \ref{sec:application}.


\section{Smoothing the metric across $\Sigma$}
\label{sec:smooth}
Let $(M,g)$ be as in Theorem \ref{main:thm}.
In this section, we construct a smooth family of metrics that approximate the only continuous metric $g$. In order to restore the non-negativity of scalar curvature and boundary mean curvature via a conformal change of metric in the next section, we prove some control on those curvatures in Proposition \ref{propo:control:curv} below which is the main result of this section.

In order to mollify the metric $g$ on a neighbourhood of $\Sigma$, we need the following result, which uses the orthogonality $\Sigma\perp\partial M$:

\begin{proposition}\label{propo:vectorfield}
There is a vector field $\xi$ on an open neighbourhood $U\subset M$ of $\Sigma$, with the same regularity as the metric $g$, such that 
\begin{itemize}
\item $\xi$ is tangent to $\partial M$;
\item $\xi=\nu_g$ on $\Sigma$; 
\end{itemize}
for some $0<\alpha<1$.
\end{proposition}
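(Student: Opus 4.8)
The plan is to build $\xi$ locally near $\Sigma$ and then patch the pieces together with a partition of unity, checking that the two required properties are preserved under convex combinations. First I would note that because $\Sigma$ is compact and transversal to $\partial M$, a neighbourhood of $\Sigma$ in $M$ can be covered by finitely many coordinate charts of two types: interior charts (away from $\partial\Sigma$) and boundary charts adapted to $\partial M$. In an interior chart, $\partial M$ plays no role, so I would simply take $\xi$ to be the $g$-unit normal to $\Sigma$ extended off $\Sigma$ in any convenient smooth-in-the-coordinates way (for instance, extend the conormal one-form $d\rho$, where $\Sigma = \{\rho = 0\}$, and raise the index with $g$, then normalise); this inherits exactly the regularity of $g$. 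The delicate charts are those meeting $\partial\Sigma$.

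In a boundary chart I would use Fermi-type coordinates for $\partial M$: coordinates $(x_1,\dots,x_n)$ with $\partial M = \{x_1 = 0\}$, $M = \{x_1 \geq 0\}$, chosen so that $\{x_1 = \text{const}\}$ are the level sets of the $g$-distance to $\partial M$ and $\partial_{x_1}$ is $g$-orthogonal to these level sets along $\partial M$ (so $\partial_{x_1} = -\eta_g$ on $\partial M$). Since $\Sigma \perp \partial M$, the hypersurface $\Sigma$ meets $\{x_1 = 0\}$ orthogonally; writing $\Sigma = \{\rho = 0\}$ for a defining function $\rho$ of the same regularity as $g$, orthogonality means $\nabla^g \rho$ is tangent to $\partial M$ along $\partial\Sigma$, i.e. $\partial_{x_1}\rho = 0$ on $\Sigma \cap \partial M$. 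The candidate vector field is $\xi := \nabla^g\rho / |\nabla^g\rho|_g$; by construction $\xi = \nu_g$ on $\Sigma$ (after fixing the sign of $\rho$), and $\xi$ has the regularity of $g$. The point to verify is that $\xi$ is tangent to $\partial M$, i.e. $\langle \xi, \partial_{x_1}\rangle_g = 0$ on $\{x_1 = 0\}$, equivalently $\langle \nabla^g\rho, \partial_{x_1}\rangle_g = \partial_{x_1}\rho = 0$ on all of $\partial M$ near $\partial\Sigma$ — but we only know this on $\Sigma\cap\partial M$. So I would instead first modify $\rho$: replace $\rho$ by $\tilde\rho(x) := \rho(0, x_2,\dots,x_n)$, i.e. extend the restriction $\rho|_{\partial M}$ to be constant in the $x_1$-direction. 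Then $\partial_{x_1}\tilde\rho \equiv 0$ everywhere, so $\langle\nabla^g\tilde\rho, \partial_{x_1}\rangle_g = 0$ on $\partial M$, giving tangency there; and on $\Sigma\cap\partial M$ one has $\tilde\rho = \rho = 0$ and, using $\partial_{x_1}\rho = 0$ there, $\nabla\tilde\rho = \nabla\rho$, so $\tilde\rho$ still vanishes on $\Sigma$ to first order with $\nabla^g\tilde\rho/|\nabla^g\tilde\rho|_g = \nu_g$ along $\partial\Sigma$. A small technical wrinkle is that $\tilde\rho$ need not vanish on all of $\Sigma$ away from the boundary, only on $\partial\Sigma$; this is fine because in these boundary charts I only need the construction to work near $\partial\Sigma$, and I will glue to the interior construction elsewhere.

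Having a local field $\xi_U$ on each chart $U$ that is tangent to $\partial M$ (on the charts meeting $\partial M$) and equals $\nu_g$ along $\Sigma\cap U$, I would take a smooth partition of unity $\{\chi_U\}$ subordinate to the cover and set $\xi := \sum_U \chi_U\, \xi_U$ on a (possibly shrunk) neighbourhood of $\Sigma$. Tangency to $\partial M$ is preserved because at any boundary point every $\xi_U$ appearing in the sum is tangent to $\partial M$, and the tangent space is a linear subspace. Along $\Sigma$, every $\xi_U$ in the sum equals $\nu_g$ and $\sum_U\chi_U = 1$, so $\xi = \nu_g$ there. Finally, $\xi$ is non-vanishing near $\Sigma$ (it equals $\nu_g$ on the compact set $\Sigma$, hence stays close to a unit field nearby) and inherits the regularity of $g$ since each $\xi_U$ does and the $\chi_U$ are smooth. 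The main obstacle is precisely the interplay at $\partial\Sigma$ between the two constraints "orthogonal to $\Sigma$" and "tangent to $\partial M$"; this is exactly where the hypothesis $\Sigma\perp\partial M$ is essential, and the fix above — flattening the defining function of $\Sigma$ in the normal direction to $\partial M$ — is what makes both hold simultaneously without destroying regularity.
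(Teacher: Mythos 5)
There is a genuine gap in the boundary-chart construction. After you replace $\rho$ by $\tilde\rho(x)=\rho(0,x_2,\dots,x_n)$, the zero set of $\tilde\rho$ is no longer $\Sigma$: it is the cylinder over $\partial\Sigma$ in the $x_1$-direction, which agrees with $\Sigma$ only to first order along $\partial\Sigma$. Consequently $\nabla^g\tilde\rho/|\nabla^g\tilde\rho|_g$ equals $\nu_g$ only on $\partial\Sigma$, not on $\Sigma\cap U$. You acknowledge this ("$\tilde\rho$ need not vanish on all of $\Sigma$ away from the boundary") but dismiss it, and the dismissal is where the argument breaks: when you form $\xi=\sum_U\chi_U\xi_U$, at any point $q\in\Sigma\setminus\partial\Sigma$ lying in the overlap of a boundary chart and an interior chart you get a nontrivial convex combination of $\xi_U(q)\neq\nu_g(q)$ with the correct $\nu_g(q)$, so $\xi(q)\neq\nu_g(q)$. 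There is no way to shrink the supports to avoid this overlap: the $\chi_U$ for a boundary chart must be positive on an open neighbourhood of a piece of $\partial\Sigma$, which necessarily meets $\Sigma\setminus\partial\Sigma$. The "affine invariance" you invoke for the partition of unity requires that \emph{every} $\xi_U$ with $\chi_U(q)>0$ equal $\nu_g(q)$, and your boundary $\xi_U$ do not.

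The paper's local construction (Lemma \ref{lemma:local:coord}) sidesteps this by never altering $\Sigma$: it chooses coordinates in which both $\Sigma=\{x_n=0\}$ and $\partial M=\{x_1=0\}$ are coordinate slices, takes $X=\partial_{x_n}$ minus its $g$-tangential-to-$\Sigma$ part (with the metric coefficients evaluated at the foot point $\bar x=(x_1,\dots,x_{n-1},0)\in\Sigma$, so that $X$ is the normal to $\Sigma$ extended constantly in $x_n$), and then reads off tangency to $\partial M$ directly from the vanishing of the $\partial_{x_1}$-coefficient at $x_1=0$, which is exactly where $\Sigma\perp\partial M$ enters. Because the metric is evaluated on $\Sigma$ itself, the property $X\propto\nu_g$ on all of $\Sigma\cap U_p$ holds by construction, so the partition-of-unity step is sound. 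If you want to salvage your version, the modification of the defining function must preserve the zero set $\Sigma$ exactly (e.g.\ multiply $\rho$ by a positive factor chosen so that $\partial_{x_1}(\lambda\rho)=0$ on $\partial M$), rather than flatten $\rho$ in the $x_1$-direction, which changes $\{\rho=0\}$. A secondary caveat: using genuine Fermi coordinates for $\partial M$ costs regularity when $g$ is only $C^{1,\alpha}$; you only need $g(\partial_{x_1},\partial_{x_a})=0$ for $a\geq 2$ along $\{x_1=0\}$, which can be arranged with coordinates as regular as $g$ without invoking the geodesic flow.
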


The proof of Proposition \ref{propo:vectorfield} relies on the following local result:

\begin{lemma}\label{lemma:local:coord}
Let $p\in\partial\Sigma$. There exist an open neighbourhood $U_p\subset M$ of $p$ and a vector field $\xi_p$ on that neighbourhood, with the same regularity as the metric $g$, such that
\begin{itemize}
\item $\xi_p$ is tangent to $\partial M$;
\item $\xi_p=\nu_g$ on $\Sigma\cap U_p$; 
\end{itemize}
for some $0<\alpha<1$.
\end{lemma}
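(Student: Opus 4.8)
The plan is to work in a convenient local chart adapted simultaneously to $\partial M$ and to $\Sigma$, construct the vector field there, and then note that in these coordinates the desired properties reduce to an elementary transport/ODE construction. First I would choose Fermi-type coordinates for $\partial M$ near $p$: since $\partial M$ is a smooth hypersurface, there is a smooth chart $x = (x_1, \bar{x}) = (x_1, x_2, \dots, x_n)$ on a neighbourhood $U_p$ of $p$ in which $\partial M = \{x_1 = 0\}$ and $M$ corresponds to $\{x_1 \geq 0\}$. Inside $\partial M$, the hypersurface $\partial\Sigma$ is smooth, so after a further smooth change of the $\bar{x}$-coordinates we may also arrange that $\partial\Sigma = \{x_1 = 0,\ x_n = 0\}$ locally. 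The key input now is the hypothesis $\Sigma \perp \partial M$: because $g$ is $C^2$ up to $\Sigma \setminus \partial\Sigma$ and continuous across $\Sigma$, and because $\Sigma$ meets $\partial M$ orthogonally in the metric $g$, I can additionally straighten $\Sigma$ so that $\Sigma \cap U_p = \{x_n = 0\}$ while keeping the chart smooth; the orthogonality is exactly what guarantees that this straightening is compatible with the previous normalization of $\partial M$ (i.e. one does not destroy $\partial M = \{x_1 = 0\}$ in the process). Note the chart itself is smooth even though $g$ is only continuous across $\Sigma$, since $\Sigma$ is a smooth submanifold of the smooth manifold $M$.

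Next I would define $\xi_p$ in these coordinates. On $\Sigma \cap U_p = \{x_n = 0\}$ the unit normal $\nu_g$ is a vector of the form $\nu_g = a(x)\,\partial_{x_n}$ for a positive function $a$ determined by $g$, having the same regularity as $g$ restricted to $\Sigma$ (so $C^{1,\alpha}$ up to $\partial\Sigma$ and $C^2$ on $\Sigma \setminus \partial\Sigma$, matching the hypotheses on $g_\pm$). Because $\Sigma \perp \partial M$, the vector $\nu_g$ is tangent to $\partial M$ \emph{along} $\partial\Sigma$, i.e. its $\partial_{x_1}$-component vanishes on $\{x_1 = 0, x_n = 0\}$; in our coordinates it has no $\partial_{x_1}$-component at all on $\Sigma$. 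I then extend: set $\xi_p(x_1, \bar{x}) := a(x_1, \bar{x}', 0)\,\partial_{x_n}$, i.e. extend the coefficient $a$ off $\Sigma$ by declaring it constant in the $x_n$-direction (here $\bar{x}'$ denotes $(x_2, \dots, x_{n-1})$). This $\xi_p$ has no $\partial_{x_1}$-component anywhere, hence is tangent to $\partial M = \{x_1 = 0\}$; it equals $\nu_g$ on $\Sigma \cap U_p$ by construction; and it inherits the regularity of $g$, since $a$ does.

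I expect the main obstacle to be the simultaneous straightening in the chart — specifically, verifying that one can normalize $\partial M$, $\partial\Sigma$, and $\Sigma$ all at once by a single smooth coordinate change, and this is precisely where $\Sigma \perp \partial M$ enters in an essential way. Without orthogonality, straightening $\Sigma$ to $\{x_n = 0\}$ would in general tilt $\partial M$ away from $\{x_1 = 0\}$, and then a vector field tangent to $\partial M$ could not be taken to be simply $a\,\partial_{x_n}$; one would have to correct it and the correction might not vanish on $\Sigma$, breaking the condition $\xi_p = \nu_g$ there. A secondary, purely technical point is checking that extending $a$ to be independent of $x_n$ preserves exactly the stated mixed regularity ($C^{1,\alpha}$ globally, $C^2$ away from the corner), which is immediate since translation in one coordinate does not affect Hölder or $C^k$ norms. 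Once Lemma \ref{lemma:local:coord} is in hand, Proposition \ref{propo:vectorfield} follows by a standard partition-of-unity patching: cover $\partial\Sigma$ by finitely many such $U_p$, cover $\Sigma \setminus \partial\Sigma$ by coordinate neighbourhoods where the analogous (boundaryless) construction is classical, and glue the local fields with a partition of unity subordinate to the cover, using that each local field equals $\nu_g$ on $\Sigma$ and is tangent to $\partial M$, properties preserved under convex combinations.
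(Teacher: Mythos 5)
There is a genuine gap. After choosing a \emph{smooth} chart in which $\partial M = \{x_1 = 0\}$, $\Sigma \cap U_p = \{x_n = 0\}$ and $\partial\Sigma \cap U_p = \{x_1 = x_n = 0\}$, you assert that $\nu_g = a\,\partial_{x_n}$ on $\Sigma$. That would require $\partial_{x_n}$ to be $g$-orthogonal to $\Sigma$ along $\Sigma$, i.e.\ $g(\partial_{x_n},\partial_{x_i}) = 0$ on $\{x_n=0\}$ for all $i<n$. A smooth straightening of the three submanifolds (which needs only the transversality of $\Sigma$ and $\partial M$, not their orthogonality) gives no control whatsoever over the off-diagonal components $g_{ni}$ on $\Sigma$, so in general $\nu_g$ will have nontrivial $\partial_{x_i}$-components for $i<n$. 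The hypothesis $\Sigma \perp \partial M$ only tells you that $\nu_g$ is tangent to $\partial M$ on $\partial\Sigma$, i.e.\ that the $\partial_{x_1}$-component of $\nu_g$ vanishes along $\{x_1=x_n=0\}$; it says nothing about the other tangential components, nor about what happens away from $\partial\Sigma$.

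To rescue the claim you would need a chart adapted to $g$ in which $\partial_{x_n}|_\Sigma = \nu_g$, for instance Fermi coordinates based on $\Sigma$. But that is incompatible with your other assertions: the chart then depends on $g$, so it is not smooth, and, more seriously, the $g$-geodesics emanating normally from $\partial\Sigma$ are tangent to $\partial M$ at their initial point (by $\Sigma\perp\partial M$) but need not remain inside $\partial M$ unless $\partial M$ is totally geodesic. Thus you cannot keep $\partial M = \{x_1=0\}$ in such a chart. Building a flow that is both normal to $\Sigma$ and preserves $\partial M$ is precisely what Lemma~\ref{lemma:local:coord} (and Proposition~\ref{propo:vectorfield}) is set up to produce, so invoking it in the chart construction would be circular. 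The paper instead takes an arbitrary smooth straightening chart and defines the vector field $X$ as $\partial_{x_n}$ minus an explicit correction in the $\partial_{x_i}$-directions, $i<n$, built from the metric restricted to $\{x_n=0\}$; the correction makes $X$ normal to $\Sigma$, and the orthogonality hypothesis is used only at the very end to check that the $\partial_{x_1}$-coefficient of $X$ vanishes on $\{x_1=0\}$. Your proof would need to incorporate such a correction term (or an equivalent device) to close the gap.
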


\begin{proof}
Choose a neighbourhood $U_p\subset M$ of $p$ and coordinates $\Phi=(x_1,...,x_n)$,
$$
\Phi: U_p\to\mathbb R^n_{+}
$$
satisfying
$$
\Phi(\Sigma\cap U_p)\subset\{x_n=0\},\qquad
\Phi(\partial M\cap U_p)\subset\{x_1=0\},\qquad
\Phi(\partial \Sigma\cap U_p)\subset\{x_1=x_n=0\}.
$$
Without loss of generality, assume that $\partial_{x_n}$ points towards the same direction as $\nu_g$.

If $x=(x_1,...,x_n)\in \mathbb R^n$, set $\bar x=(x_1,...,x_{n-1},0)$. For $x\in\Phi(U_p)$ define
$$
X(x)=\partial_{x_n}-\sum_{i=1}^{n-1}(\Phi_*g)_{\bar x}(\partial_{x_i}, \partial_{x_i})^{-1}(\Phi_*g)_{\bar x}(\partial_{x_n}, \partial_{x_i})\partial_{x_i}.
$$
Observe that $X$ is tangent to $\{x_1=0\}$. Indeed, $(\Phi_*g)_{\bar x}(\partial_{x_n}, \partial_{x_1})=0$ for $x_1=0$, as $\Sigma\perp \partial M$.
The lemma is proved by setting 
$$
\xi_p=\frac{1}{\|\Phi^*X\|_{g}}\Phi^*X\,.
$$
\end{proof}

\begin{proof}[Proof of Proposition \ref{propo:vectorfield}]
As $\partial\Sigma$ is compact we can choose finitely many points $p_1,...,p_k\in\partial\Sigma$ and open neighbourhoods $U_{1},...,U_{k}$ of these points in $M$, having vector fields $\xi_{1}$, ..., $\xi_{k}$, respectively, given by Lemma \ref{lemma:local:coord}.
Choose an open subset $U_0$ of $M$, disjoint of $\partial M$ and containing $\Sigma\backslash\left(\cup_{A=1}^{k}U_{A}\right)$, and a vector field $\xi_0$ on $U_0$ such that $\xi_0=\nu_g$ on $\Sigma\cap U_0$.

Now, $\{U_A\}_{A=0}^{k}$ is an open covering of a compact neighbourhood of $\Sigma$ for which we choose a partition of unity $\{\chi_A\}_{A=0}^k$. Set 
$$
\xi=\sum_{A=0}^{k}\chi_A\xi_A
$$
which clearly satisfies the required properties.
\end{proof}

Using the notation of Proposition \ref{propo:vectorfield}, let $\Phi_t$ be the integral flow of $\xi$, which is defined on $U$. Choosing $U$ smaller if necessary, we can assume that the map
$$
\Phi:\Sigma\times I\to U
$$
$$
(x,t)\mapsto \Phi_t(x)
$$
is a homomorphism, where $I=(-2\epsilon,2\epsilon)$ and $\epsilon>0$ is small. 
We identify $U$ with $\Sigma\times I$, referring to points on $U$ as $(x,t)\in \Sigma\times I$, and use in $U$ the differential structure provided by that identification, possibily inducing a new differential structure on $M$ still denoted by $M$. For each $t\in I$, set 
$$
\Sigma_t=\left\{(x,t)\: ;\:x\in\Sigma\}=\Sigma\times \{t\right\}.
$$

Similar to \cite{miao}, we proceed to construct a family of $C^2$ metrics $g_{\delta}$, for $\delta>0$ small, which gets uniformly close to $g$ as $\delta\to 0$ and coincides with $g$ outside $\Sigma\times \left(-\frac{\delta}{2},\frac{\delta}{2}\right)$.
We choose $\chi(t) \in C_c^{\infty}([-1,1])$ to be a standard mollifier on $\mathbb{R}$ such that
\begin{equation*}
	0 \leq \chi \leq 1 \quad \text{and} \quad \int_{-1}^{1} \chi(t) dt = 1.
\end{equation*}
Let $\sigma(t) \in C_c^{\infty}([- \frac{1}{2}, \frac{1}{2}])$ be another cut-off function such that
\begin{equation*}
	\sigma(t) = 
	\begin{cases} 
		\sigma(t) = \frac{1}{100}, & |t| < \frac{1}{4}, \\ 
		0 < \sigma(t) \leq \frac{1}{100}, & \frac{1}{4} < |t| < \frac{1}{2}.
	\end{cases}
\end{equation*}
Given any $0 < \delta \ll \epsilon$, let
\begin{equation}
	\sigma_{\delta}(t) = \delta^2 \sigma \left( \frac{t}{\delta} \right)
\end{equation}
and define
\begin{align}\label{def:g:delta}
	g_{\delta}(x,s) &= \int g(x,s - \sigma_{\delta}(s)t)\, \chi(t)\, dt, \quad s \in (-\epsilon, \epsilon),
	\\&= 
	\begin{cases} 
		\int g(x,t) \displaystyle\frac{1}{\sigma_{\delta}(s)} \chi \left(\frac{s - t}{\sigma_{\delta}(s)} \right)  dt, & \sigma_{\delta}(s) > 0. \\
		g(x,s), & \sigma_{\delta}(s) = 0,
	\end{cases}\notag
\end{align}
for $(x,s)\in \Sigma\times (-\epsilon, \epsilon)$ and $g_{\delta}=g$ when $(x,s)\notin \Sigma\times (-\epsilon, \epsilon).$ 
Observe that $g_\delta$ coincides with $g$ outside $\Sigma\times \left(-\frac{\delta}{2},\frac{\delta}{2}\right)$ and also that $\nabla^k g_{\delta}$ converges to $\nabla^k g$ in $L^\infty\left(\Sigma\times \left(-\frac{\delta}{2},\frac{\delta}{2}\right)\right)$ as $\delta\to 0$ for $k=0,1,2,...\,$.

\begin{remark}
Let us now point out one of the key points of our argument. In \cite{miao}, the identification of the tubular neighbourhood $U$ with $\Sigma\times I$ is isometric as it is done by means of geodesics orthogonal to $\Sigma$. 
In our case, due to the presence of the boundary $\partial M$, the neighbourhood $U$ is no longer a geometric product. Instead of geodesics, we use the integral curves of $\xi$ which are orthogonal to $\Sigma$ only at $t=0$. 
As $U$ is not a geometric product, we need to control some additional terms in the scalar curvature when compared to \cite{miao}. We also need to control the mean curvature of $\partial M$. 
\end{remark}
The control on the scalar curvature and the boundary mean curvature is the content of the next proposition.

\begin{proposition}\label{propo:control:curv}
Denote by $R_{g_{\delta}}$ the scalar curvature of $M$ and by $H^{\d M}_{g_{\delta}}$ the mean curvature of $\partial M$ with respect to the metrics $g_\delta$ constructed above. Then
\begin{align*}
	R_{g_{\delta}}(x,t) &= O(1), \quad \text{for } (x,t) \in \Sigma \times \left\{\frac{\delta^2}{100} < |t| \leq \frac{\delta}{2} \right\},  \\
	R_{g_{\delta}}(x,t) &= O(1) + \left( H_{g_-}^\Sigma(x) - H_{g_+}^\Sigma(x) \right) \frac{100}{\delta^2} \chi \left( \frac{100 t}{\delta^2} \right), \\
	& \quad \text{for } (x,t) \in \Sigma \times \left[ -\frac{\delta^2}{100}, \frac{\delta^2}{100} \right], \\
	H^{\d M}_{g_{\delta}}(x,t) &= O(1), \quad \text{for } (x,t) \in \Sigma \times \left\{0\leq |t| \leq \frac{\delta}{2} \right\},  
\end{align*}
where $O(1)$ represents quantities that are bounded by constants depending on $g$, but not on $\delta$.
\end{proposition}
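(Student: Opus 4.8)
The plan is to compute $R_{g_\delta}$ and $H^{\partial M}_{g_\delta}$ directly in the coordinates $(x,t)\in\Sigma\times I$ coming from the flow of $\xi$, and to track exactly how many derivatives of $s\mapsto\sigma_\delta(s)$ and of the mollified metric fall on the curvature expression. First I would write $g_\delta(x,s)=\int g(x,s-\sigma_\delta(s)t)\chi(t)\,dt$ and differentiate under the integral sign. The key scaling observations are: $\sigma_\delta(s)=\delta^2\sigma(s/\delta)$ is $O(\delta^2)$ with $\partial_s\sigma_\delta=O(\delta)$ and $\partial_s^2\sigma_\delta=O(1)$; and since $g$ is only $C^0$ across $\Sigma$ but $C^{1,\alpha}$ (indeed $C^2$) on each side up to $\Sigma$, the $t$-derivative $\partial_s g_\delta$ picks up a factor $(1-\sigma_\delta'(s)t)/\sigma_\delta(s)=O(\sigma_\delta(s)^{-1})=O(\delta^{-2})$ where the mollification is active, while $\partial_s^2 g_\delta=O(\delta^{-2})$ as well once one uses that $g$ is one-sided $C^1$ (so $\partial_s g$ is bounded away from the jump and the worst term comes from differentiating the $\sigma_\delta^{-1}$ prefactor, not from a genuine second derivative of $g$). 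Away from the region $|t|\le\delta^2/100$, i.e. on $\{\delta^2/100<|t|\le\delta/2\}$, one has the crucial gain that $\sigma_\delta(s)$ is comparable to $\delta^2$ only through $\sigma(s/\delta)$, but there the relevant smallness is that $\partial_s\sigma_\delta$ and $\partial_s^2\sigma_\delta$ combine with the $O(\delta^2)$ size of $\sigma_\delta$ to cancel the $\delta^{-2}$ blow-up; this is exactly the computation in Miao's paper and gives $R_{g_\delta}=O(1)$ there.

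Next, for the inner region $[-\delta^2/100,\delta^2/100]$, $\sigma_\delta\equiv\delta^2/100$ is constant, so $\sigma_\delta'=\sigma_\delta''=0$ and $g_\delta(x,s)=\int g(x,s-\tfrac{\delta^2}{100}t)\chi(t)\,dt=\tfrac{100}{\delta^2}\int g(x,t')\chi\big(\tfrac{100(s-t')}{\delta^2}\big)\,dt'$. Then $\partial_s g_\delta(x,s)=\tfrac{100}{\delta^2}\int \partial_s g(x,t')\,\chi\big(\tfrac{100(s-t')}{\delta^2}\big)\,dt'$ — wait, more carefully, one integrates by parts so that the $\delta^{-2}$ derivative of $\chi$ acts and then recombines; the net effect is that $\partial_s g_\delta$ stays bounded except that the distributional jump $[\partial_s g]=g_+-g_-$ across $t'=0$ produces, upon a second differentiation, a term $\tfrac{100}{\delta^2}\chi(\tfrac{100 s}{\delta^2})$ times the jump. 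The scalar curvature in these coordinates has the schematic form $R_{g_\delta}=g^{ab}_\delta\big(\partial^2 g_\delta + (\partial g_\delta)^2 + \text{lower}\big)$, and the only term that is not $O(1)$ is the one linear in $\partial_s^2 g_\delta$ hitting the $tt$-type components; identifying the coefficient of the jump, and using that on $\Sigma$ the mean curvature with respect to $\nu_g=\partial_t$ is $H^\Sigma_{g_\pm}=\tfrac12 g^{ij}\partial_t g_{ij}$ (the second fundamental form computed from the one-sided metrics), one gets precisely $\big(H^\Sigma_{g_-}(x)-H^\Sigma_{g_+}(x)\big)\tfrac{100}{\delta^2}\chi\big(\tfrac{100t}{\delta^2}\big)$ as the singular part, everything else being $O(1)$. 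Here I would use that $\Sigma_0=\Sigma$ is totally geodesic-adapted in the sense that $\xi=\nu_g$ exactly at $t=0$, so the cross terms $g_{ti}$ vanish to first order at $t=0$ and do not contribute a singular term.

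For the boundary mean curvature, the point is that $\xi$ is tangent to $\partial M$ (Proposition 2.2), so the flow $\Phi_t$ preserves $\partial M$, and in the coordinates $(x,t)$ the boundary $\partial M$ is $\partial\Sigma\times I$, a product in the $t$-direction. Hence $\eta_g$, the outward unit normal to $\partial M$, has no $\partial_t$-component, and $H^{\partial M}_{g_\delta}=\mathrm{div}_{g_\delta}\eta_g$ involves only one $s$-derivative of $g_\delta$ at worst — and that derivative hits only the tangential-to-$\partial M$ components, whose jump is precisely what makes $g$ merely continuous; but a single mollified derivative $\partial_s g_\delta$ on the inner region is $\tfrac{100}{\delta^2}\int\partial_s g\,\chi$, which is bounded because $\partial_s g$ (one-sided) is bounded and $\int\tfrac{100}{\delta^2}\chi(\tfrac{100\cdot}{\delta^2})=1$; the potential $\delta^{-2}$ blow-up would require two $s$-derivatives, which $H^{\partial M}_{g_\delta}$ does not contain. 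Thus $H^{\partial M}_{g_\delta}=O(1)$ on all of $\Sigma\times\{|t|\le\delta/2\}$.

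The main obstacle I anticipate is bookkeeping the non-product structure of $U$: unlike in Miao's setting the identification $U\cong\Sigma\times I$ is not isometric, the metric components $g_{ti}(x,t)$ and $g_{tt}(x,t)$ are genuine functions of both variables, and one must verify that (i) the extra terms they contribute to $R_{g_\delta}$ are $O(1)$ — which follows because $g$ is $C^2$ up to $\Sigma$ from each side and these components are continuous across $\Sigma$ with one-sided bounded derivatives, so mollifying them costs nothing beyond $O(1)$ — and (ii) that the only genuinely singular contribution still comes from the tangential block $g_{ij}$ and reproduces exactly the mean-curvature jump with the stated constant. Carefully isolating the coefficient of the delta-like bump, and checking that the cross terms $g_{ti}$ (which vanish at $t=0$ but not nearby) contribute only $O(1)$ because their $t$-derivative at $t=0$ is bounded, is the delicate part; everything else is the same $\delta$-counting as in \cite{miao}.
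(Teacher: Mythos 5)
Your proposal captures the right idea and arrives at the correct structure, but the route is genuinely different from the paper's, and there is one recurring misstatement worth flagging. The paper organizes the computation geometrically via the Gauss equation and the first variation of $H^{\Sigma_t}_{g_\delta}$: it writes $R_{g_\delta}=R_{g_\delta}^{\Sigma_t}-(|A^{\Sigma_t}_{g_\delta}|^2+(H^{\Sigma_t}_{g_\delta})^2)-2\,\partial_{\nu_t}H^{\Sigma_t}_{g_\delta}$, computes $(A^{\Sigma_t}_{g_\delta})_{ij}$ explicitly in the $(x,t)$-coordinates (including the cross terms $(g_\delta)^{nk}$, $k<n$, that arise because $U$ is not a geometric product), shows every first-order term there is $O(1)$, and isolates $(g_\delta)^{nn}(g_\delta)^{ij}(g_\delta)_{ij,nn}$ as the unique unbounded piece inside $\partial_{\nu_t}H^{\Sigma_t}_{g_\delta}$, importing Miao's formulas (24)--(25), (27)--(30) for the final identification of the jump. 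Your proposal instead expands $R_{g_\delta}$ schematically in coordinates as $g_\delta^{-1}(\partial^2 g_\delta+(\partial g_\delta)^2+\cdots)$ and tracks the order of each $s$-derivative. Both organize the same cancellations, but the paper's Gauss-equation decomposition buys a cleaner geometric identification of the coefficient (it is literally the mean-curvature jump appearing in $\partial_{\nu_t}H^{\Sigma_t}_{g_\delta}$), whereas your direct expansion puts more weight on the unstated step of showing that the only $\partial_s^2 g_\delta$ terms contributing to the singularity are those contracted by $(g_\delta)^{nn}(g_\delta)^{ij}$; the paper's treatment of the non-product cross terms $(g_\delta)^{nk}$, $k<n$, is more explicit than yours.

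The one genuine error to flag: you assert twice in the first paragraph that $\partial_s g_\delta=O(\sigma_\delta(s)^{-1})=O(\delta^{-2})$. This is wrong. Differentiating $g_\delta(x,s)=\int g(x,s-\sigma_\delta(s)t)\chi(t)\,dt$ under the integral gives
\[
\partial_s g_\delta(x,s)=\int g_{,n}\bigl(x,s-\sigma_\delta(s)t\bigr)\bigl(1-t\,\delta\,\sigma'(s/\delta)\bigr)\chi(t)\,dt,
\]
and since $g_{,n}$ is bounded one-sidedly and $1-t\delta\sigma'(s/\delta)=O(1)$, the first derivative is $O(1)$, not $O(\delta^{-2})$; the $\delta^{-2}$ apparent in the changed-variable form $\sigma_\delta^{-1}\int g\,\chi\bigl((s-t')/\sigma_\delta\bigr)$ cancels against the $O(\sigma_\delta)$ length of the support, exactly as the paper's formula (after (\ref{eq:Adelta})) shows. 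You implicitly self-correct this in the second paragraph (``the net effect is that $\partial_s g_\delta$ stays bounded''), and it is crucial that this is so: it is precisely what makes $(A^{\Sigma_t}_{g_\delta})_{ij}$, $H^{\Sigma_t}_{g_\delta}$, and $H^{\partial M}_{g_\delta}$ all $O(1)$ and confines the singular term to a genuine second $t$-derivative. Your argument for the boundedness of $H^{\partial M}_{g_\delta}$ (only one $s$-derivative can appear) is sound and is essentially what the paper does by the same explicit Christoffel-symbol computation as for $A^{\Sigma_t}_{g_\delta}$.
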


\begin{proof}
	By the Gauss equation and the first variation formula of $H^{\Sigma_t}_{g_{\delta}}$, the scalar curvature satisfies
	\begin{equation}\label{eq:jump}
	R_{g_{\delta}}(x,t)=R_{g_{\delta}}^{\Sigma_t}(x,t)-(|A^{\Sigma_t}_{{g_{\delta}}}(x,t)|^2+H^{\Sigma_t}_{g_{\delta}}(x,t)^2)-2\frac{\d}{\d \nu_t}H^{\Sigma_t}_{{g_{\delta}}}(x,t),
	\end{equation}
	where $\nu_t$ is the unit normal vector along $\Sigma_t$ pointing towards the unbounded region, $R_{g_{\delta}}^{\Sigma_t}$ is the scalar curvature and $A^{\Sigma_t}_{g_{\delta}}$ is the second fundamental form of $\Sigma_t$, all with respect to $g_\delta$. 

First of all, $R_{g_{\delta}}^{\Sigma_t}$ is bounded by constants depending only on $g$ since $g_{\delta}$ is uniformly close to $g$. Now for $A^{\Sigma_t}_{{g_{\delta}}}$, we choose coordinates such that $\partial_n=\partial_t$ and calculate it as
	$$(A^{\Sigma_t}_{g_{\delta}})_{ij}=-g_\delta(\nabla_{\d_j}\d_i, \nu_t)=-g_\delta\big(\sum_{a,b=1}^{n}\Gamma_{ij}^a\d_a, \nu^b_t\d_b\big),$$
	where $i, j=1,...,n-1$, $\Gamma_{ij}^a$ are the Christoffel symbols of the metric $g_\delta$, and 
$$
\nu_t^a=\sum_{a=1}^{n}\frac{(g_{\delta})^{na}}{\sqrt{(g_{\delta})^{nn}}}.
$$ 
After a direct computation,
\begin{align}\label{eq:Adelta}	
(A^{\Sigma_t}_{g_{\delta}})_{ij}=
&\frac{1}{2}\(\sum_{k=1}^{n-1}\frac{(g_{\delta})^{nk}}{\sqrt{(g_{\delta})^{nn}}}(g_{\delta})_{ij,k}+\sqrt{(g_{\delta})^{nn}}(g_{\delta})_{ij,n}\)
\\
&-\frac{1}{2}\(\sum_{k=1}^{n-1}\frac{(g_{\delta})^{nk}}{\sqrt{(g_{\delta})^{nn}}}((g_{\delta})_{ik,j}+(g_{\delta})_{jk,i})+\sqrt{(g_{\delta})^{nn}}((g_{\delta})_{in,j}+(g_{\delta})_{jn,i})\).\notag
\end{align}
	It follows from the definition \eqref{def:g:delta} that
	$$(g_{\delta})_{ij,k}(x,t)=\frac{\d}{\d x_k}\int g_{ij}(x,t-\sigma_{\delta}(t)s)\chi(s)ds=\int g_{ij,k}(x,t-\sigma_{\delta}(t)s)\chi(s)ds$$
	is bounded by constants depending only on $g$. Same holds for the terms $(g_{\delta})_{in,j}(x,t).$ As for $$(g_{\delta})_{ij,n}(x,t)=\frac{\d}{\d t}\int g_{ij}(x,t-\sigma_{\delta}(t)s)\chi(s)ds,$$ one calculates as in \cite[formulas (20)-(22)]{miao} to obtain for $t\in (-\epsilon, \epsilon)$,
	$$(g_{\delta})_{ij,n}(x,t)=\int g_{ij,n}(x,t-\sigma_{\delta}(t)s)\(1-s\delta\sigma'\(\frac{t}{\delta}\)\)\chi(s)ds$$
	which again is bounded by constants depending on $g$. This gives a control on $A^{\Sigma_t}_{g_{\delta}}$. A similar control holds for $H^{\Sigma_t}_{g_{\delta}}$ since $H^{\Sigma_t}_{g_{\delta}}=\sum_{i,j=1}^{n-1}(g_{\delta})^{ij}(A^{\Sigma_t}_{g_{\delta}})_{ij}$.
		Now let us look at the derivative
	\begin{equation}\label{eq:dH}
		\frac{\d}{\d \nu_t}H^{\Sigma_t}_{g_{\delta}}=\sum_{i,j=1}^{n-1}\frac{\d}{\d \nu_t}(g_{\delta})^{ij}(A^{\Sigma_t}_{g_{\delta}})_{ij}+\sum_{i,j=1}^{n-1}(g_{\delta})^{ij}\frac{\d}{\d \nu_t}(A^{\Sigma_t}_{g_{\delta}})_{ij}.
	\end{equation}
	It follows from equation \eqref{eq:Adelta} that all terms of \eqref{eq:dH} are bounded by constants depending only on $g$ except for the term 
	$$(g_{\delta})^{nn}(g_{\delta})_{ij,nn}.$$
	One can calculate similarly as in \cite[formulas (24)-(25), (27)-(30)]{miao}  to obtain 
	$$\sum_{i,j=1}^{n-1}(g_{\delta})^{nn}(g_{\delta})^{ij}(g_{\delta})_{ij,nn}(x,t)=O(1)+\left( H_{g_+}^\Sigma(x) - H_{g_-}^\Sigma(x) \right) \frac{100}{\delta^2}\chi\(\frac{100t}{\delta^2}\),$$ inside of $\Sigma\times[-\frac{\delta^2}{100}, \frac{\delta^2}{100}]$. 
	The above calculations give us the estimate for $R_{g_{\delta}}$. Now let us turn to 
	$$H^{\d M}_{g_{\delta}}=\sum_{\alpha,\beta=2}^{n}(g_{\delta})^{\alpha\beta}(A^{\partial M}_{g_{\delta}})_{\alpha\beta},$$
	where 
$$
(A^{\partial M}_{g_{\delta}})_{\alpha\beta}=-g_\delta(\nabla_{\d_{\alpha}}\d_{\beta},  \eta_{\d M}) \qquad\text{and}\qquad\eta_{\d M}=-\sum_{a=1}^{n}\frac{(g_{\delta})^{1a}}{\sqrt{(g_{\delta})^{11}}}\d_a.
$$
	Similarly as for the calculations of $H^{\Sigma_t}_{g_{\delta}}$, we can conclude that $H^{\d M}_{g_{\delta}}$ is also bounded by constants depending only on $g$. This finishes the proof of Proposition \ref{propo:control:curv}.
\end{proof}



\section{Proof of Theorem \ref{main:thm}}
\label{sec:pf:main:thm}

The proof of the first statement of Theorem \ref{main:thm}, namely that $m_{\partial M}(g)\geq 0$, consists of two steps. The first one is the solution of a linear equation aiming to find a family of smooth functions $u_\delta>0$ such that $\widetilde g_\delta=u_\delta^{\frac{4}{n-2}}g_\delta$ has non-negative scalar curvature on $M$ and non-negative mean curvature on $\partial M$. This is the content of Proposition \ref{propo:lim:u} which ensures that $m_{\partial M}(\widetilde g_\delta)\geq 0$ by \cite{almaraz-barbosa-lima}. The second step is provided by Proposition \ref{propo:conv:mass} which says that the mass of $\widetilde g_\delta$ converges to the mass of $g$ as $\delta\to 0$, thus proving the non-negativity of the latter. Finally, in Subsection \ref{subsec:positive} we prove the last statement of Theorem \ref{main:thm}.

We first fix some notations here. For any given function $f$, we define the positive and negative parts of it as $f^+$ and $f^-$. In this sense, we have $f=f^+-f^-$ and $|f|=f^++f^-$. 

For an integer $k\geq 0$, and any numbers $q\geq 1$ and $\gamma\in\mathbb R$, we define the weighted Sobolev space $L^{q}_{k,\gamma}(M)$ to be the set of functions $w$ for which the norm
\begin{equation*}
\|w\|_{L^q_{k,\gamma}(M)}=
\begin{cases}
\sum_{i=0}^{k}\left(\int_M (|r^{i-\gamma}\nabla_g^i w|)^q r^{-n}dv_g\right)^{\frac{1}{q}}, &q<\infty, \\
\sum_{i=0}^{k}\mathrm{ess\,sup}_M(r^{i-\gamma}|\nabla_g^i w|), & q=\infty,
\end{cases}
\end{equation*}
is finite. Here, $r(x)$ is any smooth positive extension of the asymptotic parameter $|x|$ to $M$. 
We also define the {\em weighted} $C^k$ {\em space} $C^{k}_\gamma(M)$ as the set of $C^k$ functions $w$ on $M$ for which the norm  
$$
\|w\|_{C^k_\gamma(M)}=\sum_{i=0}^k\sup_M r^{-\gamma+i}|\nabla^i w|
$$
is finite. Moreover, if $0<\alpha<1$, we define the {\em weighted Hölder space} $C^{k,\alpha}_{\gamma}$ as the set of functions $w\in C^{k}_{\gamma}(M)$ such that   
the norm 
$$
\|w\|_{C^{k,\alpha}_\gamma(M)}=\|w\|_{C^k_\gamma(M)}+
\sup_{x,y}\left\{\left(\min \{r(x),r(y)\}\right)^{-\gamma+k+\alpha}\frac{|\nabla^k w(x)-\nabla^k w(y)|}{|x-y|^\alpha}\right\}
$$
is finite. Here, the supreme is over all $x\neq y$ in $M$ such that $y$ is contained in a normal coordinate neighbourhood of $x$, and $\nabla^k w(y)$ is the tensor at $x$ obtained by the parallel transport along the radial geodesic from $x$ to $y$.

\subsection{Conformal deformation}

Set $c_n=(n-2)/(4(n-1))$ and also $H_{g_{\delta}}=H^{\d M}_{g_{\delta}}$ in this subsection for simplicity. We wish to find solutions $u_\delta>0$ to the system
\begin{align}\label{eq:u:delta}
	\begin{cases}
		\Delta_{g_{\delta}} u_{\delta}+c_nR^-_{g_{\delta}}u_{\delta}=0,&M,\\
		\displaystyle \frac{\d  u_{\delta}}{\d \eta_{g_\delta}}-2c_nH^-_{g_{\delta}}u_{\delta}=0, &\partial M,
	\end{cases}
\end{align}
of the form $u_\delta=1+w_\delta$ such that $w_\delta\to 0$ uniformly as $\delta\to 0$.

\begin{proposition}\label{propo:lim:u}
Let $2-n<\gamma<0$ and $q>1$. For small $\delta>0$, there exists $w_\delta\in L^{q}_{2,\gamma}(M)$ satisfying
\begin{align}\label{eq:w delta}
	\begin{cases}
		\Delta_{g_{\delta}} w_{\delta}+c_nR^-_{g_{\delta}}w_{\delta}=-c_nR^-_{g_{\delta}}, &M,\\
		\displaystyle\frac{\d  w_{\delta}}{\d \eta_{g_\delta}}-2c_nH^-_{g_{\delta}}w_{\delta}=2c_nH^-_{g_{\delta}}, &\partial M,
	\end{cases}
\end{align}
with $\lim\limits_{\delta\rightarrow 0}\|w_{\delta}\|_{L^{\infty}(M)}=0$.
\end{proposition}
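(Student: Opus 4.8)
The plan is to produce $w_\delta$ by a contraction mapping argument in the weighted space $L^q_{2,\gamma}(M)$, exploiting that the ``bad'' term in the curvature estimates of Proposition~\ref{propo:control:curv} carries a favourable sign under the hypotheses of Theorem~\ref{main:thm} and that the remaining negative curvature is supported in a shrinking neighbourhood of $\Sigma$.

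\emph{Step 1 (the data are small).} First I would combine Proposition~\ref{propo:control:curv} with the assumptions $R_{g_-},R_{g_+}\ge 0$, $H_{g_-}^{\partial M},H_{g_+}^{\partial M}\ge 0$ and $H_{g_-}^\Sigma\ge H_{g_+}^\Sigma$. Outside $\Sigma\times(-\delta/2,\delta/2)$ one has $g_\delta=g$, so $R_{g_\delta}=R_{g_\pm}\ge 0$ and $H_{g_\delta}^{\partial M}=H_{g_\pm}^{\partial M}\ge 0$ almost everywhere there; inside, the only non-$O(1)$ contribution to $R_{g_\delta}$ is $(H_{g_-}^\Sigma-H_{g_+}^\Sigma)\tfrac{100}{\delta^2}\chi(\tfrac{100t}{\delta^2})\ge 0$, while $H_{g_\delta}^{\partial M}=O(1)$. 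Hence there is a constant $C>0$, independent of $\delta$, with $\|R^-_{g_\delta}\|_{L^\infty(M)}+\|H^-_{g_\delta}\|_{L^\infty(\partial M)}\le C$, and $R^-_{g_\delta}$, $H^-_{g_\delta}$ supported in the fixed compact set $\overline{\Sigma\times(-\delta/2,\delta/2)}$ and its boundary trace, which have $n$- (resp. $(n-1)$-) dimensional volume $O(\delta)$ and sit where the weight $r$ is bounded above and below. Therefore, for every fixed $q<\infty$,
$$
\|R^-_{g_\delta}\|_{L^q_{0,\gamma-2}(M)}+\|H^-_{g_\delta}\|_{L^q_{1,\gamma-1}(\partial M)}=O(\delta^{1/q})\to 0,\qquad\delta\to 0,
$$
and, more generally, $\|R^-_{g_\delta}w\|_{L^q_{0,\gamma-2}}+\|H^-_{g_\delta}w\|_{L^q_{1,\gamma-1}}\le C\delta^{1/q}\|w\|_{L^\infty(M)}$ for any $w$.

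\emph{Step 2 (uniformly invertible linear operator).} Fix $q>n/2$, so that the weighted Sobolev embedding $L^q_{2,\gamma}(M)\hookrightarrow C^0_\gamma(M)\hookrightarrow L^\infty(M)$ holds, and $2-n<\gamma<0$. By the weighted linear elliptic theory for asymptotically flat manifolds with non-compact boundary of \cite{almaraz-barbosa-lima}, the operator
$$
\mathcal L_{g_\delta}:L^q_{2,\gamma}(M)\to L^q_{0,\gamma-2}(M)\oplus L^q_{1,\gamma-1}(\partial M),\qquad \mathcal L_{g_\delta}w=\Big(\Delta_{g_\delta}w,\ \tfrac{\partial w}{\partial\eta_{g_\delta}}\Big|_{\partial M}\Big),
$$
is Fredholm of index $0$ on this weight range, and it is injective because $\Delta_{g_\delta}w=0$, $\partial_{\eta_{g_\delta}}w=0$ together with $\gamma<0$ force $w\equiv 0$ by integration by parts; hence $\mathcal L_{g_\delta}$ is an isomorphism. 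The key point I would then establish is that $\|\mathcal L_{g_\delta}^{-1}\|$ is bounded \emph{uniformly in $\delta$}: the coefficients of the divergence-form Laplacian $\Delta_{g_\delta}$ stay uniformly bounded, uniformly continuous and uniformly elliptic as $\delta\to 0$ (only the \emph{second} derivatives of $g_\delta$ degenerate, and these do not enter), and every $g_\delta$ coincides with the fixed asymptotically flat metric $g$ near infinity, so the underlying elliptic estimates carry $\delta$-independent constants; the uniform lower bound $\|w\|_{L^q_{2,\gamma}}\le C_0\|\mathcal L_{g_\delta}w\|$ then follows from a standard blow-up/compactness argument (using $g_\delta\to g$ in $C^0$ with $\nabla g_\delta$ bounded, Rellich compactness on compact pieces, the weight at the ends, and the injectivity of $\mathcal L_g$). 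I expect this uniform invertibility to be the main obstacle.

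\emph{Step 3 (contraction and conclusion).} Define $T_\delta:L^q_{2,\gamma}(M)\to L^q_{2,\gamma}(M)$ by
$$
T_\delta(w)=\mathcal L_{g_\delta}^{-1}\Big(-c_nR^-_{g_\delta}(1+w),\ 2c_nH^-_{g_\delta}(1+w)\Big),
$$
so that $w=T_\delta(w)$ is exactly the system \eqref{eq:w delta}. By Step~2, $\|\mathcal L_{g_\delta}^{-1}\|\le C_0$; combining this with the estimate of Step~1 and $\|w\|_{L^\infty}\le C_1\|w\|_{L^q_{2,\gamma}}$ gives
$$
\|T_\delta(w_1)-T_\delta(w_2)\|_{L^q_{2,\gamma}}\le C_0C_1C\,\delta^{1/q}\|w_1-w_2\|_{L^q_{2,\gamma}},\qquad \|T_\delta(0)\|_{L^q_{2,\gamma}}\le C_0C\,\delta^{1/q}.
$$
For $\delta$ small enough that $C_0C_1C\,\delta^{1/q}<\tfrac12$, $T_\delta$ is a contraction of the whole Banach space $L^q_{2,\gamma}(M)$ and therefore has a unique fixed point $w_\delta$, with $\|w_\delta\|_{L^q_{2,\gamma}}\le 2C_0C\,\delta^{1/q}\to 0$; consequently $\|w_\delta\|_{L^\infty(M)}\le C_1\|w_\delta\|_{L^q_{2,\gamma}}\to 0$, and $u_\delta:=1+w_\delta>0$ for small $\delta$. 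Finally, the statement for a general exponent $q>1$ follows from the case just treated by elliptic regularity, since $w_\delta\in L^\infty(M)$ solves a linear equation whose right-hand side $-c_nR^-_{g_\delta}(1+w_\delta)$ is bounded and compactly supported and which is harmonic with $L^q_{2,\gamma}$-decay near infinity.
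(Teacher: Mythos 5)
Your proposal is correct in outline but takes a genuinely different route from the paper on both halves of the statement, and the trade-offs are worth noting. For existence, the paper works directly with the operator $T_\delta w=\big(\Delta_{g_\delta}w+c_nR^-_{g_\delta}w,\ \partial w/\partial\eta_{g_\delta}-2c_nH^-_{g_\delta}w\big)$ and shows $T_\delta\to T=(\Delta_g,\partial/\partial\eta_g)$ in operator norm, whence $T_\delta$ is an isomorphism for small $\delta$ and $w_\delta=T_\delta^{-1}(-c_nR^-_{g_\delta},2c_nH^-_{g_\delta})$; you instead invert only $\mathcal L_{g_\delta}=(\Delta_{g_\delta},\partial/\partial\eta_{g_\delta})$ and close by contraction, which works equally well since, as you observe, the zeroth-order coefficients are $O(\delta^{1/q})$-small in the relevant weighted norms thanks to their uniform $L^\infty$ bound (coming from the sign hypothesis and Proposition~\ref{propo:control:curv}) and $O(\delta)$-small support. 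For the $L^\infty$ conclusion the difference is more substantial: the paper multiplies \eqref{eq:w delta} by $w_\delta$, integrates, invokes Sobolev and Sobolev-trace inequalities, and then a Moser-type estimate from \cite{almaraz-sun}, whereas you take $q>n/2$ so that $L^q_{2,\gamma}\hookrightarrow C^0_\gamma\hookrightarrow L^\infty$ and read off $\|w_\delta\|_{L^\infty}\to0$ directly from the fixed-point bound; this is cleaner and avoids the iteration entirely. Two remarks on making your Step~2 precise: the boundary target space should be the fractional trace space $L^q_{1-1/q,\gamma-1}(\partial M)$, not $L^q_{1,\gamma-1}(\partial M)$; and the uniform invertibility of $\mathcal L_{g_\delta}$ need not be a ``main obstacle'' requiring blow-up -- it follows from operator-norm convergence $\mathcal L_{g_\delta}\to\mathcal L_g$, provided one is careful that, since $g$ is only continuous (not $C^1$) across $\Sigma$, $\nabla(g_\delta-g)$ does \emph{not} go to zero in $L^\infty$; it is instead bounded with support of measure $O(\delta)$, so one should estimate the first-order coefficient difference in $L^n$ (paired with $\nabla w\in L^{nq/(n-q)}$ by Sobolev) rather than $L^\infty$, yielding an $O(\delta^{1/n})$ operator norm bound. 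With that correction both your Step~2 and the paper's convergence claim $T_\delta\to T$ are justified. The final bootstrap to general $q>1$ by elliptic regularity from the compactly supported $L^\infty$ right-hand side is standard and correct.
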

\begin{proof}
Observe that the coefficients of $g$ are in the Sobolev space $W_{loc}^{1,p}(M)$ for any $p>1$ and so we can consider the operator 
$$
T:L^q_{2,\gamma}(M)\to L^q_{0,\gamma-2}(M) \times L^{q}_{1-1/q,\gamma-1} (\partial M)
$$
given by $T w=\left( \Delta_{g}w, \d w/\d\eta_{g}\right)$.
Using \cite[Proposition 2.2]{bartnik} and the doubling argument in \cite[Proposition 3.4]{almaraz-barbosa-lima} we see that $T$ is an isomorphism.
Consider also the operators
$$
T_\delta w=\left(\Delta_{g_\delta}w+c_nR^-_{g_{\delta}} w, \frac{\partial w}{\partial \eta_{g_\delta}}-2c_nH_{g_{\delta}}^-w \right).
$$

One calculates using Hölder inequality for weighted Sobolev spaces (see for example Theorem 1.2 in \cite{bartnik}) that for some constant $C>0$: 
\begin{align*}
	\|(\Delta_{g_{\delta}}-\Delta_{g})w\|_{L^q_{0,\gamma-2}(M)}\leq  C\|g_{\delta}-g\|_{L^{\infty}_{1,0}(M)}\|w\|_{L^q_{2,\gamma}(M)}\:,
\end{align*}
\begin{align*}
	\|R_{g_{\delta}}^-w\|_{L^q_{0,\gamma-2}(M)}\leq  C\|g_{\delta}-g\|_{L^{\infty}_{2,0}(M)}\|w\|_{L^q_{2,\gamma}(M)}\:,
\end{align*}
\begin{align*}
	\big\|\frac{\partial w}{\partial \eta_{g_\delta}}-\frac{\partial w}{\partial \eta_{g}}\big\|_{L^{q}_{1-1/q,\gamma-1} (\partial M)}\leq  C\|g_{\delta}-g\|_{L^{\infty}_{1,0}(M)}\| w\|_{L^q_{2,\gamma}(M)}\:,
\end{align*}
\begin{align*}
	\|H_{g_{\delta}}^-w\|_{L^{q}_{1-1/q,\gamma-1} (\partial M)}\leq 
	C\|g_{\delta}-g\|_{L^{\infty}_{2,0}(M)}\| w\|_{L^q_{2,\gamma}(M)}\:.
\end{align*}
Since $\|g_{\delta}-g\|_{L^{\infty}_{2,0}(M)}\to 0$, as $\delta\to 0$, $T_\delta$ converges to $T$ in the space of continuous operators from $L^q_{2,\gamma}(M)$ to $L^q_{0,\gamma-2}(M) \times L^{q}_{1-1/q,\gamma-1} (\partial M)$ so that  $T_\delta$ is an isomorphism for small $\delta$. This proves the first statement of Proposition \ref{propo:lim:u}.

Let us now prove that $\lim\limits_{\delta\rightarrow 0}\|w_{\delta}\|_{L^{\infty}(M)}=0$.
We multiply the equations \eqref{eq:w delta} by $w_{\delta}$ and integrate to obtain
\begin{align*}
\int_{M}(w_{\delta}\Delta_{g_{\delta}}w_{\delta}+c_nR_{g_{\delta}}^-w_{\delta}^2)&-\int_{\d M} \left(w_{\delta}\frac{\d w_{\delta}}{\d \eta_{g_\delta}}-2c_nH^-_{g_{\delta}}w_{\delta}^2\right)\\
=&-\int_{M}c_nR^-_{g_{\delta}}w_{\delta}-\int_{\d M}2c_nH_{g_{\delta}}^-w_{\delta}.
\end{align*}
Here and in the rest of this proof, we are omitting the volume and area elements with respect to $g_\delta$.
Using integration by parts and Hölder inequality we have
\begin{align}\label{ineq:w}
\int_{M}|\nabla_{g_{\delta}}w_{\delta}|^2\leq c_n&\(\int_{M}(R_{g_{\delta}}^-)^{\frac{n}{2}}\)^{\frac{2}{n}}\(\int_{M}|w_{\delta}|^{\frac{2n}{n-2}}\)^{\frac{n-2}{n}}+c_n\(\int_{M}(R_{g_{\delta}}^-)^{\frac{2n}{n+2}}\)^{\frac{n+2}{2n}}\(\int_{M}|w_{\delta}|^{\frac{2n}{n-2}}\)^{\frac{n-2}{2n}}\notag
\\
&+2c_n\(\int_{\d M}(H^-_{g_{\delta}})^{n-1}\)^{\frac{1}{n-1}}\(\int_{\d M}|w_{\delta}|^{\frac{2(n-1)}{n-2}}\)^{\frac{n-2}{n-1}}\notag
\\
&+2c_n\(\int_{\d M}(H^-_{g_{\delta}})^{\frac{2(n-1)}{n}}\)^{\frac{n}{2(n-1)}}\(\int_{\d M}|w_{\delta}|^{\frac{2(n-1)}{n-2}}\)^{\frac{n-2}{2(n-1)}}.
\end{align}
It follows from Sobolev and Sobolev trace inequalities that
$$\(\int_{M}|w_{\delta}|^{\frac{2n}{n-2}}\)^{\frac{n-2}{n}}+\(\int_{\d M}|w_{\delta}|^{\frac{2(n-1)}{n-2}}\)^{\frac{n-2}{n-1}}\leq (C_{\delta}+C'_{\delta})\int_{M}|\nabla_{g_{\delta}}w_{\delta}|^2$$
where $C_{\delta},C'_{\delta}$ are the corresponding Sobolev constants for $(M,g_{\delta})$. Therefore,
\begin{align*}
\(\int_{M}|w_{\delta}|^{\frac{2n}{n-2}}\)^{\frac{n-2}{n}}&+\(\int_{\d M}|w_{\delta}|^{\frac{2(n-1)}{n-2}}\)^{\frac{n-2}{n-1}}
\\ 
&\leq c_n(C_{\delta}+C'_{\delta})\(\int_{M}(R^-_{g_{\delta}})^{\frac{n}{2}}\)^{\frac{2}{n}}\(\int_{M}|w_{\delta}|^{\frac{2n}{n-2}}\)^{\frac{n-2}{n}}+\frac{1}{2}c_n^2(C_{\delta}+C'_{\delta})^2\(\int_{M}(R^-_{g_{\delta}})^{\frac{2n}{n+2}}\)^{\frac{n+2}{n}}
\\
&\hspace{0.4cm}+\frac{1}{2}\(\int_{M}|w_{\delta}|^{\frac{2n}{n-2}}\)^{\frac{n-2}{n}}+2c_n(C_{\delta}+C'_{\delta})\(\int_{\d M}(H^-_{g_{\delta}})^{n-1}\)^{\frac{1}{n-1}}\(\int_{\d M}|w_{\delta}|^{\frac{2(n-1)}{n-2}}\)^{\frac{n-2}{n-1}}
\\
&\hspace{0.6cm}+2c_n^2(C_{\delta}+C'_{\delta})^2\(\int_{\d M}(H^-_{g_{\delta}})^{\frac{2(n-1)}{n}}\)^{\frac{n}{n-1}}+\frac{1}{2}\(\int_{\d M}|w_{\delta}|^{\frac{2(n-1)}{n-2}}\)^{\frac{n-2}{n-1}}.
\end{align*}
Now for $\delta$ small enough one obtains
$$\(\int_{M}|w_{\delta}|^{\frac{2n}{n-2}}\)^{\frac{n-2}{n}}+\(\int_{\d M}|w_{\delta}|^{\frac{2(n-1)}{n-2}}\)^{\frac{n-2}{n-1}}\leq C \(\int_{M}(R^-_{g_{\delta}})^{\frac{2n}{n+2}}\)^{\frac{n+2}{n}}+C\(\int_{\d M}(H^-_{g_{\delta}})^{\frac{2(n-1)}{n}}\)^{\frac{n}{n-1}}.$$
Observe that the right hand side of this inequality goes to zero as $\delta\to 0$.
It then follows from \cite[Proposition A.2]{almaraz-sun} that
$$\lim_{\delta\to 0}\sup_{M}|w_{\delta}|=0.$$ 
This finishes the proof. 
\end{proof}


\subsection{Convergence of the mass} 

It follows from Proposition \ref{propo:lim:u} and the conformal transformation laws that the problem \eqref{eq:u:delta} has a solution $u_\delta=1+w_\delta$ such that $\widetilde{g_{\delta}}=u_{\delta}^{\frac{4}{n-2}}g_{\delta}$ satisfies
\begin{equation}\label{eq:Rgdelta}
R_{\widetilde g_\delta}=c_n^{-1}u_{\delta}^{-\frac{n+2}{n-2}}(-\Delta_{g_{\delta}}u_{\delta}+c_nR_{g_{\delta}}u_{\delta})=u_{\delta}^{-\frac{4}{n-2}}R_{g_{\delta}}^+\geq 0
\end{equation}
and 
\begin{equation}\label{eq:Hgdelta}
H^{\d M}_{\widetilde g_\delta}=\frac{1}{n-1}c_n^{-1}\(\frac{\d u_{\delta}}{\d \eta_{g_\delta}}+2c_nH^{\d M}_{g_{\delta}}u_{\delta}\)=\frac{1}{n-1}u_{\delta}^{-\frac{2}{n-2}}(H^{\d M}_{g_{\delta}})^+\geq 0.
\end{equation}

Moreover, $(M, \widetilde g_\delta)$ is asymptotically flat (with non-compact boundary).
Indeed, according to \cite[Proposition 3.2]{almaraz-barbosa-lima}, the space $L^q_{k,\gamma}(M)$ is embedded in an appropriate Hölder weighted space so that $w_\delta(x)=O(|x|^{\gamma})$ and $\partial w_\delta(x)=O(|x|^{\gamma-1})$ for some $2-n<\gamma<(2-n)/2$. Then the decay \eqref{decay:g} holds. The integrability of $R_{\widetilde g_\delta}$ and $H^{\d M}_{\widetilde g_\delta}$ follows from \eqref{eq:Rgdelta}, \eqref{eq:Hgdelta} and Proposition~\ref{propo:lim:u}. By the positive mass theorem in \cite{almaraz-barbosa-lima}\footnote{Although the main result in \cite{almaraz-barbosa-lima} is stated for smooth metrics, they also hold when their coefficients are in $C^{2,\alpha}$ in the interior of the manifold and $C^{1,\alpha}$ up to the boundary. See the footnote for Theorem \ref{main:thm}.}, we have $m_{\partial M}(\widetilde g_\delta)\geq 0$. The fact that $m_{\partial M}(g)\geq 0$ follows from the next proposition.

\begin{proposition}\label{propo:conv:mass}
We have
$$
\lim_{\delta\to 0} m_{\partial M}(\widetilde g_\delta)=m_{\partial M}(g).
$$
\end{proposition}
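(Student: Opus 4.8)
The plan is to compare the mass integrals of $\widetilde g_\delta$ and of $g$ directly, using that the mass of an asymptotically flat metric with non-compact boundary is computed on large coordinate spheres in the asymptotic chart, and that $\widetilde g_\delta$ and $g$ differ there only through the conformal factor $u_\delta=1+w_\delta$. Since $\Omega\subset K$ and $g_\delta=g$ outside $\Sigma\times(-\delta/2,\delta/2)\subset K$, we have $g_\delta=g$ on $M\backslash K$ for every $\delta$; hence on the asymptotic end $\widetilde g_\delta=(1+w_\delta)^{\frac{4}{n-2}}g$. First I would write $m_{\partial M}(\widetilde g_\delta)$ as a limit of flux integrals over $\{|x|=\rho,\,x_1\ge0\}$ together with the boundary term over $\{|x|=\rho,\,x_1=0\}$, in the fixed asymptotic coordinates, and expand $(\widetilde g_\delta)_{ab}=(1+w_\delta)^{\frac{4}{n-2}}g_{ab}$. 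The difference $(\widetilde g_\delta)_{ab}-g_{ab}$ and its first derivatives are controlled by $w_\delta$ and $\partial w_\delta$ times bounded quantities coming from $g$ and its derivatives.

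Next I would invoke the decay already recorded in the paragraph preceding the proposition: by \cite[Proposition 3.2]{almaraz-barbosa-lima}, $w_\delta(x)=O(|x|^\gamma)$ and $\partial w_\delta(x)=O(|x|^{\gamma-1})$ for some $2-n<\gamma<(2-n)/2$. Crucially, I would check that the $O$-constants here can be taken uniform in $\delta$: this follows because $w_\delta$ solves \eqref{eq:w delta}, the operators $T_\delta$ are uniformly invertible for small $\delta$ (shown in the proof of Proposition \ref{propo:lim:u}), and the right-hand sides $c_nR^-_{g_\delta}$, $2c_nH^-_{g_\delta}$ are supported in the fixed compact set $K$ with norms that are bounded (indeed tend to $0$), so $\|w_\delta\|_{L^q_{2,\gamma}(M)}$ is bounded uniformly in $\delta$ and the embedding into the weighted Hölder space is $\delta$-independent. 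With $2\gamma<2-n$, the extra terms in the flux integrand that involve $w_\delta$ decay like $|x|^{2\gamma-1}$, whose integral over $\{|x|=\rho\}$ is $O(\rho^{n-1}\cdot\rho^{2\gamma-1})=O(\rho^{n-2+2\gamma})\to0$ as $\rho\to\infty$; similarly the terms linear in $w_\delta$ but multiplied by $g_{ab,c}-\delta$-type factors, which are $O(|x|^{-1-\tau})$, give integrands of order $|x|^{\gamma-1-\tau}$ with $\gamma-1-\tau<-(n-1)$ after using $\tau>(n-2)/2$ and $\gamma<0$; and the terms linear in $\partial w_\delta$ produce integrands $O(|x|^{\gamma-1})$ whose sphere integral is $O(\rho^{n-2+\gamma})$, which does \emph{not} obviously vanish, so these must be handled more carefully.

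To deal with the genuinely linear-in-$\partial w_\delta$ contribution I would integrate by parts, converting the flux of $\partial w_\delta$ over the sphere into a bulk integral over $M\backslash K$ of $\Delta w_\delta$ plus lower-order terms, and use that $\Delta_{g_\delta}w_\delta=\Delta_g w_\delta=-c_nR^-_{g_\delta}w_\delta$ vanishes on $M\backslash K$; the harmonicity (to leading order) of $w_\delta$ on the end forces the leading $|x|^{\gamma}$ coefficient of $w_\delta$ to be such that this flux integral converges, and one gets that $\lim_{\rho\to\infty}$ of the offending term equals a constant multiple of the ``mass-like'' coefficient of $w_\delta$, which is itself $o(1)$ as $\delta\to0$ because $\|w_\delta\|_{L^\infty}\to0$ together with the uniform weighted bounds force the $|x|^{\gamma}$-coefficient to tend to $0$. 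Alternatively, and more cleanly, I would quote the conformal covariance of the mass: for $\widetilde g=u^{\frac{4}{n-2}}g$ with $u\to1$ at infinity and $u-1\in L^q_{2,\gamma}$, one has $m_{\partial M}(\widetilde g)=m_{\partial M}(g)+c\int_M(\text{something})$, but since here $\Delta_g u=0$ near infinity the correction is a convergent bulk integral that is continuous in $\delta$ and vanishes as $w_\delta\to0$ in $L^q_{2,\gamma}$. Either route gives $m_{\partial M}(\widetilde g_\delta)\to m_{\partial M}(g)$.

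The main obstacle is precisely the term linear in $\partial w_\delta$ in the flux integrand: naive decay estimates give an integrand of borderline order $|x|^{\gamma-1}$ whose sphere integral need not go to zero, so one cannot conclude by brute-force estimation alone. Overcoming this requires using the equation satisfied by $w_\delta$ (its harmonicity outside $K$) to either integrate by parts into a convergent bulk integral or to identify the leading asymptotic coefficient of $w_\delta$ and show it tends to $0$ with $\delta$ — in both cases leaning on the uniform-in-$\delta$ weighted estimates coming from the uniform invertibility of $T_\delta$ established in Proposition \ref{propo:lim:u}.
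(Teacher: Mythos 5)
Your proposal is essentially the paper's argument, and the ``alternative, cleaner'' route you sketch at the end is in fact exactly what the paper does. The paper writes $m_{\partial M}(\widetilde g_\delta)-m_{\partial M}(g)=m_{\partial M}(\widetilde g_\delta)-m_{\partial M}(g_\delta)$ using $g_\delta=g$ on the end, reduces the difference to $-\tfrac{4(n-1)}{n-2}\lim_\rho\int_{S_\rho^+}\partial u_\delta/\partial\mu\,d\sigma$, then multiplies the system \eqref{eq:u:delta} by $u_\delta$ and integrates over $B_\rho^+$ (a Green/energy identity on all of $M$, not just $M\backslash K$), obtaining $m_{\partial M}(g)-m_{\partial M}(\widetilde g_\delta)=\int_M(\tfrac{4(n-1)}{n-2}|\nabla_{g_\delta}u_\delta|^2-R^-_{g_\delta}u_\delta^2)\,dv_{g_\delta}-\int_{\partial M}2H^-_{g_\delta}u_\delta^2\,d\sigma_{g_\delta}$, which tends to $0$ by Proposition \ref{propo:lim:u} and \eqref{ineq:w}. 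Your primary route (integrating by parts over $M\backslash K$ and then arguing that the leading $|x|^\gamma$ coefficient of $w_\delta$ tends to $0$) would also work in principle, but it is more delicate: it produces an inner boundary term on $\partial K$ that must be tracked, and controlling the asymptotic coefficient of $w_\delta$ requires more than the $L^\infty$ convergence --- whereas the paper's identity over all of $B_\rho^+$ makes the boundary term disappear automatically via the Neumann condition and leaves only a bulk integral directly controlled by the already-established Dirichlet-energy estimate. So: correct diagnosis of the difficulty, correct remedy, and your second route coincides with the paper's.
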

\begin{proof}
Let us again set $H_{g_{\delta}}=H_{g_{\delta}}^{\d M}$ in the proof for simplicity. We define 
$$B^+_{\rho}=\{z\in\Rn\,;\:|z|<{\rho} \}, \qquad  S^+_{\rho}=\{z\in\Rn\,;\:|z|={\rho} \},$$
and $$D_{\rho}=B^+_{\rho}\cap \d\Rn=\{z\in\d\Rn\,;\:|z|<{\rho} \}.$$ It follows from the definition of mass that 
\begin{align*}
m_{\partial M}&(\widetilde g_\delta)-m_{\partial M}(g)=m_{\partial M}(\widetilde g_\delta)-m_{\partial M}(g_\delta)\\
&=\lim_{\rho\to\infty}\int_{S_\rho^+}
\left(
\partial_b (u_\delta^{\frac{4}{n-2}})(g_\delta)_{ab} 
-\partial_a (u_\delta^{\frac{4}{n-2}})(g_\delta)_{bb} 
\right)
\frac{x_a}{|x|} d\sigma\\
&=(1-n)\lim_{\rho\to \infty}\int_{S_\rho^+}\partial_a(u_\delta^{\frac{4}{n-2}})\frac{x_a}{|x|}d\sigma
=-\frac{4(n-1)}{n-2}\lim_{\rho\to \infty}\int_{S_\rho^+}\frac{\partial u_\delta}{\partial \mu} d\sigma, 
\end{align*}
where $\mu$ stands for the Euclidean outward pointing normal vector to $S_\rho^+$ and $d\sigma$ is the Euclidean area element.

On the other hand, we integrate by parts the equations \eqref{eq:u:delta} to obtain  
\begin{align*}
0&=\int_{B_\rho^+}\left( \frac{4(n-1)}{n-2}u_\delta(-\Delta_{g_\delta}u_\delta)-R^-_{g_\delta}u_{\delta}^2\right)dv_{g_\delta}\\
&\hspace{0.4cm}+\int_{D_\rho}\left( \frac{4(n-1)}{n-2}u_\delta \frac{\partial u_\delta}{\partial \eta_{g_\delta}}-2H_{g_\delta}^{-} u_{\delta}^2\right) d\sigma_{g_\delta}\\
&=\int_{B_\rho^+}\left( \frac{4(n-1)}{n-2}|\nabla_{g_\delta}u_\delta|_{g_\delta}^2-R^-_{g_\delta}u_{\delta}^2\right)dv_{g_\delta}\
-\int_{D_\rho}2H_{g_\delta}^{-}u_{\delta}^2 d\sigma_{g_\delta}\\
&\hspace{0.4cm}-\int_{S_\rho^+}\frac{4(n-1)}{n-2}u_\delta \frac{\partial u_\delta}{\partial \mu_{g_\delta}} d\sigma_{g_\delta}.
\end{align*}
Taking the limit as $\rho\to\infty$, we get
\begin{align*}
&\frac{4(n-1)}{n-2}\lim_{\rho\to\infty}\int_{S_\rho^+}u_\delta \frac{\partial u_\delta}{\partial \mu_{g_\delta}} d\sigma
\\
&=\int_{M}\left( \frac{4(n-1)}{n-2}|\nabla_{g_\delta}u_\delta|_{g_\delta}^2-R^-_{g_\delta}u_{\delta}^2\right)dv_{g_\delta}\
-\int_{\partial M}2H_{g_\delta}^{-}u_{\delta}^2 d\sigma_{g_\delta}.
\end{align*}
In particular, 
\begin{align*}
m_{\partial M}(g)-m_{\partial M}(\widetilde g_\delta)
&=\int_{M}\left( \frac{4(n-1)}{n-2}|\nabla_{g_\delta}u_\delta|_{g_\delta}^2-R^-_{g_\delta}u_{\delta}^2\right)dv_{g_\delta}\
-\int_{\partial M}2H_{g_\delta}^{-}u_{\delta}^2 d\sigma_{g_\delta}
\end{align*}
which goes to zero as $\delta\to 0$ according to Proposition \ref{propo:lim:u} and inequality \eqref{ineq:w}. 
\end{proof}

As stated just above Proposition \ref{propo:conv:mass}, that proposition implies that $m_{\partial M}(g)\geq 0$. This proves the first part of Theorem \ref{main:thm}.


\subsection{The case of strict positivity of the mass}\label{subsec:positive}

In this subsection, we assume that $H^\Sigma_{g_-}>H^\Sigma_{g_+}$ at some point on $\Sigma$. Let us prove that $m_{\partial M}(g)>0$. 

Since $H^\Sigma_{g_-}$ and $H^\Sigma_{g_+}$ are continuous on $\Sigma$, we can choose a compact subset $K\subset \Sigma$ and some $\lambda>0$ such that
\begin{equation*}
H^\Sigma_{g_-}(x)-H^\Sigma_{g_+}(x)\geq \lambda,\: \forall x\in K.
\end{equation*} 
Therefore, by Proposition \ref{propo:control:curv}, we have
\begin{equation*}
R_{g_{\delta}}^+(x,t)\geq \lambda \frac{100}{\delta^2}\chi\left(\frac{100t}{\delta^2}\right)-C_0, \:\forall (x,t)\in K\times \left[-\frac{\delta^2}{100}, \frac{\delta^2}{100}\right].
\end{equation*}

Since $R_{\widetilde{g_{\delta}}}\geq 0$, $H^{\Sigma}_{\widetilde{g_{\delta}}}\geq 0$, and $\widetilde{g_{\delta}}$ is asymptotically flat of order $\tau>0$, there exists a positive solution $v_\delta$ to the equations
\begin{align*}
	\begin{cases}
		\Delta_{\widetilde{g_{\delta}}} v_{\delta}-c_nR_{\widetilde{g_{\delta}}}v_{\delta}=0,&M,\\
		\displaystyle \frac{\d  v_{\delta}}{\d \eta_{\widetilde g_\delta}}+2c_nH^{\d M}_{\widetilde{g_{\delta}}}v_{\delta}=0, &\partial M,
	\end{cases}
\end{align*}
such that $v_\delta\to 1$ as $|x|\to \infty$.  Indeed, we write $v_\delta=1+z_\delta$ and obtain a solution $z_\delta\in C^{2,\alpha}_{-\tau}(M)$ to
\begin{align}\label{eq:v_delta}
	\begin{cases}
		\Delta_{\widetilde{g_{\delta}}} z_{\delta}-c_nR_{\widetilde{g_{\delta}}}z_{\delta}=c_nR_{\widetilde{g_{\delta}}},&M,\\
		\displaystyle \frac{\d  z_{\delta}}{\d \eta_{\widetilde g_\delta}}+2c_nH^{\d M}_{\widetilde{g_{\delta}}}z_{\delta}=-2c_nH^{\d M}_{\widetilde{g_{\delta}}}, &\partial M,
	\end{cases}
\end{align}
as follows. Define $\widetilde T_\delta w=\left(\Delta_{\widetilde{g_{\delta}}} w-c_nR_{\widetilde{g_{\delta}}}w, \d  w/\d \eta_{\widetilde g_\delta}+2c_nH^{\d M}_{\widetilde{g_{\delta}}}w\right)$ the operator
$$
\widetilde T_\delta:C^{2,\alpha}_{-\tau}(M)\to C^{0,\alpha}_{-\tau-2}(M)\times C^{1,\alpha}_{-\tau-1}(\d M).
$$
As $R_{\widetilde{g_{\delta}}}\in C^{0,\alpha}_{-\tau-2}(M)$, $H^{\Sigma}_{\widetilde{g_{\delta}}}\in C^{1,\alpha}_{-\tau-1}(\d M)$, it follows from Proposition 3.3 in ~\cite{almaraz-barbosa-lima} that the $\widetilde T_{\delta}$ are isomorphisms. 

By the maximum principle, we have that 
$$0<v_{\delta}\leq 1.$$ Now, let us define $$\widehat{g_{\delta}}=v_{\delta}^{\frac{4}{n-2}}\widetilde{g_{\delta}}.$$
Similar to previous discussions, we know that $\widehat{g_{\delta}}$ is an asymptotically flat metric with zero scalar curvature and zero mean curvature on the boundary. Moreover, the mass $m_{\d M}(\widehat{g_{\delta}})$ and $m_{\d M}(\widetilde{g_{\delta}})$ are related by 
$$m_{\d M}(\widetilde{g_{\delta}})=m_{\d M}(\widehat{g_{\delta}})+\int_{M}\left( \frac{4(n-1)}{n-2}|\nabla_{\widetilde{g_{\delta}}}v_\delta|_{\widetilde{g_{\delta}}}^2+R_{\widetilde{g_{\delta}}}v_{\delta}^2\right)dv_{\widetilde{g_{\delta}}}\
+\int_{\partial M}2H_{\widetilde{g_{\delta}}}^{\partial M}v_{\delta}^2 d\sigma_{\widetilde{g_{\delta}}},$$
where $m_{\d M}(\widehat{g_{\delta}})\geq 0$ by the positive mass theorem in \cite{almaraz-barbosa-lima}. Now, one can argue similarly as in Proposition 4.2 of \cite{miao} to show that
$$\inf_{\delta>0}\left[\int_{M}\left( \frac{4(n-1)}{n-2}|\nabla_{\widetilde{g_{\delta}}}v_\delta|_{\widetilde{g_{\delta}}}^2+R_{\widetilde{g_{\delta}}}v_{\delta}^2\right)dv_{\widetilde{g_{\delta}}}\
+\int_{\partial M}2H_{\widetilde{g_{\delta}}}^{\partial M}v_{\delta}^2 d\sigma_{\widetilde{g_{\delta}}}\right]>0. $$
This shows that $\inf_{\delta>0}m_{\d M}(\widetilde{g_{\delta}})>0$, therefore $m_{\d M}(g)>0$, and proves the second part of Theorem \ref{main:thm}.


\section{Application to the positive mass theorem on manifolds with non-compact corner}
\label{sec:application}

In this section, as an application of Theorem \ref{main:thm} we will prove Theorem \ref{appl:thm}. Let us first present some definitions and notations involving cornered spaces and related mass quantity. 

A {\it{cornered space}} \cite{mckeown} is an $n$-dimensional smooth Riemannian manifold $(M,g)$ with codimension two
corners such that 

\vspace{0.2cm}
(i) there are submanifolds with boundary $\Sigma_1\subset \partial M$ and $\Sigma_2\subset \partial M$ of the boundary $\partial M$, such that $\Gamma=\Sigma_1\cap\Sigma_2$ is the mutual boundary, and is the entire codimension-two corner of $M$, and such that $\partial M=\Sigma_1\cup\Sigma_2$;

\vspace{0.2cm}
\noindent and

(ii) the corner $\Gamma\subset M$ is a smooth submanifold of $M$.

\vspace{0.2cm}
We denote a cornered space by $(M,\Sigma_1, \Sigma_2, g)$.
Let $\eta_1$ and $\eta_2$ be the outward pointing unit normal vectors to $\Sigma_1$ and $\Sigma_2$ respectively and consider the mean curvatures  $H_g^{\Sigma_1}=\text{div}_g\eta_1$ and $H_g^{\Sigma_2}=\text{div}_g\eta_2$.

Recall that  $\mathbb R^n_{\mathbb L}=\{x=(x_1,...,x_n)\in\mathbb R^n\: ;\: x_1\geq 0,\, x_n\geq 0\}$
is the Euclidean quarter-space. 
We assume that the cornered space $(M,\Sigma_1, \Sigma_2, g)$ with dimension $n\geq 3$ is {\it{asymptotically flat (with a non-compact corner)}} in the sense that there is a compact subset $K\subset M$ such that 
$M\backslash K$ is diffeomorphic to $\{x\in\mathbb R^n_{\mathbb L}\: ;\:|x|> 1\}$, and in the induced coordinates $g$ satisfies 
$$
\sum_{a,b,c,d=1}^{n}\left( 
|g_{ab}(x)-\delta_{ab}|+|x||g_{ab,c}(x)|+|x|^2|g_{ab,cd}(x)|
\right)
\leq C|x|^{-\tau},
$$
for some $C>0$ and $\tau>(n-2)/2$, and 
$$
\sum_{a=1}^{n}\frac{x_a}{|x|}g_{1a}+\sum_{a=1}^{n}\frac{x_a}{|x|}g_{na}
$$
is integrable on $\Gamma$. Recall that the comas stand for partial derivatives.
Assume further that the scalar curvature $R_g$ is integrable on $M$ and the mean curvatures $H_g^{\Sigma_1}$ and $H_g^{\Sigma_2}$ are integrable on $\Sigma_1$ and $\Sigma_2$ respectively.
We define the mass of $(M,\Sigma_1, \Sigma_2, g)$ by
\begin{align}\label{mass:corner}
m_{\Sigma_1,\Sigma_2}(g)=\lim_{\rho\to\infty}
&\Big(
\sum_{a,b=1}^n\int_{x\in\mathbb R^n_{\mathbb L},\,|x|=\rho} (g_{ab,b}-g_{bb,a})\frac{x_a}{|x|}
\\
&+\sum_{i=2}^{n}\int_{x\in\mathbb R^n_{\mathbb L},\,|x|=\rho,\,x_1=0} g_{1i}\frac{x_i}{|x|}
+\sum_{i=1}^{n-1}\int_{x\in\mathbb R^n_{\mathbb L},\,|x|=\rho,\,x_n=0} g_{ni}\frac{x_i}{|x|}
\Big)\notag
\end{align}
where we are omitting the area elements induced by the Euclidean metric.

\begin{remark}
The definition of $m_{\Sigma_1,\Sigma_2}(g)$ is introduced in \cite{almaraz-lima-mckeown} where it is also proved that the limit in \eqref{mass:corner} exists and is finite, and it is independent of the choice of asymptotic coordinates $x=(x_1,...,x_n)$. Moreover, combined with our Theorem~\ref{appl:thm}, one can obtain the rigidity part of the positive mass theorem for our setting. We will not explore this in the current manuscript. 
\end{remark}

The rest of this section is devoted to the proof of Theorem \ref{appl:thm}. First of all, we need the following:

\begin{proposition}\label{prop:conf:flat}
	Let $(M,\Sigma_1, \Sigma_2,g)$ be an asymptotically flat manifold, with a non-compact corner, of order $\tau>(n-2)/2$ , where $R_g\geq 0$, $H^{\Sigma_1}_g\geq 0$ and $H^{\Sigma_2}_g\geq 0$. Then for any small $\epsilon>0$ there exists an asymptotic flat metric $g_\epsilon$ of order $\tau-\epsilon$ such that:

	(i)  $R_{g_\epsilon}\geq 0$, $H^{\Sigma_1}_{g_\epsilon}\geq 0$ and $H^{\Sigma_2}_{g_\epsilon}\geq 0$ everywhere, with $R_{g_\epsilon}= 0$, $H^{\Sigma_1}_{g_\epsilon}=0$ and $H^{\Sigma_2}_{g_\epsilon}=0$ outside a compact set K;
	
	(ii) $g_\epsilon$ is conformally flat outside K;
	
	(iii) $\left|m_{\Sigma_1,\Sigma_2}(g_{\epsilon})-m_{\Sigma_1,\Sigma_2}(g)\right|\leq \epsilon$.
\end{proposition}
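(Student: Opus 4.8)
\medskip\noindent\emph{Proof plan.} I would produce $g_\epsilon$ from $g$ by two conformal deformations supported near infinity: first a cut-off conformal change of the type used in Section~\ref{sec:pf:main:thm} to move the curvatures off the end, then the Schoen--Yau-type step that replaces the end by a conformally flat model. All the linear analysis would be carried out with the weighted theory for the cornered asymptotically flat manifold supplied in the appendix (the half-space analogues being \cite{almaraz-barbosa-lima,bartnik}), and the curvature computations with the conformal laws \eqref{eq:Rgdelta}--\eqref{eq:Hgdelta}.

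\smallskip\textbf{Phase 1.} Fix a cut-off $\chi_R$ equal to $1$ on $\{|x|\le R\}$, to $0$ on $\{|x|\ge 2R\}$, with $|\nabla^j\chi_R|=O(R^{-j})$, and for $R$ large solve
$$
\Delta_g u_R-c_n(1-\chi_R)R_g\,u_R=0 \text{ in } M,\qquad \frac{\partial u_R}{\partial\eta_i}+2c_n(1-\chi_R)H_g^{\Sigma_i}u_R=0 \text{ on }\Sigma_i,\qquad u_R\to 1 .
$$
Since $R_g\ge0$ and $H_g^{\Sigma_i}\ge0$, the problem is coercive and its coefficients tend to $0$ in the relevant weighted norms as $R\to\infty$, so the operator is an isomorphism for $R$ large and gives $u_R=1+w_R>0$ with $w_R\to 0$ and $w_R\in C^{2,\alpha}_{-(\tau-\epsilon)}$ (the loss $\epsilon$ only occurring at exceptional weights). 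Then $g_R:=u_R^{4/(n-2)}g$ has $R_{g_R}=u_R^{-4/(n-2)}\chi_R R_g\ge0$ and $H_{g_R}^{\Sigma_i}=\tfrac1{n-1}u_R^{-2/(n-2)}\chi_R H_g^{\Sigma_i}\ge0$, both zero on $\{|x|\ge 2R\}$, and $g_R$ is asymptotically flat of order $\tau-\epsilon$. An integration by parts as in Proposition~\ref{propo:conv:mass} gives $|m_{\Sigma_1,\Sigma_2}(g_R)-m_{\Sigma_1,\Sigma_2}(g)|\le C\big(\int_{M\setminus B_R}R_g+\sum_i\int_{\Sigma_i\setminus B_R}H_g^{\Sigma_i}\big)$, which by the assumed integrability of $R_g$ and $H_g^{\Sigma_i}$ is $\le\epsilon/2$ once $R$ is large; fix such an $R$.

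\smallskip\textbf{Phase 2.} On $\{|x|>2R\}$ the metric $g_R$ is now \emph{exactly} scalar-flat and boundary-minimal; in corner-adapted harmonic coordinates an elliptic bootstrap on the scalar-flat equation yields the expansion $g_R=\delta+c_R|x|^{2-n}\delta+O(|x|^{1-n})$ there, with $c_R$ constant (the slowest-decaying term of a scalar-flat asymptotically flat metric is necessarily pure trace). Choose $\phi=1+a_R|x|^{2-n}$ on the model quarter-space with $\tfrac4{n-2}a_R=c_R$; then $\phi$ is Euclidean-harmonic, satisfies the Neumann condition on $\{x_1=0\}$ and $\{x_n=0\}$, and $m_{\Sigma_1,\Sigma_2}(\phi^{4/(n-2)}\delta)=m_{\Sigma_1,\Sigma_2}(g_R)$. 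For $S\gg R$ and a cut-off $\theta_S$ ($1$ on $\{|x|\le S\}$, $0$ on $\{|x|\ge 2S\}$), set $\bar g=\theta_S g_R+(1-\theta_S)\phi^{4/(n-2)}\delta$; it equals $g_R$ on $\{|x|\le S\}$ and the conformally flat, scalar-flat, boundary-minimal metric $\phi^{4/(n-2)}\delta$ on $\{|x|\ge 2S\}$, while on the annulus in between $g_R-\phi^{4/(n-2)}\delta=O(|x|^{1-n})$ forces $R_{\bar g}=O(S^{-n-1})$ and $H_{\bar g}^{\Sigma_i}=O(S^{-n})$, all supported there; hence $\|R_{\bar g}^-\|_{L^1}+\sum_i\|(H_{\bar g}^{\Sigma_i})^-\|_{L^1}=O(S^{-1})$ and the corresponding $L^{n/2}$ and $L^{n-1}$ norms tend to $0$. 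The latter smallness makes $v\mapsto(\Delta_{\bar g}v+c_n R_{\bar g}^-v,\ \partial v/\partial\eta_i-2c_n(H_{\bar g}^{\Sigma_i})^-v)$ an isomorphism, just as in Proposition~\ref{propo:lim:u}, so solving it with right-hand side $(-c_n R_{\bar g}^-,\,2c_n(H_{\bar g}^{\Sigma_i})^-)$ gives $\widetilde u=1+\widetilde w>0$, $\widetilde w\to0$; set $g_\epsilon=\widetilde u^{4/(n-2)}\bar g$. Then $R_{g_\epsilon}=\widetilde u^{-4/(n-2)}R_{\bar g}^+\ge0$ and $H_{g_\epsilon}^{\Sigma_i}=\tfrac1{n-1}\widetilde u^{-2/(n-2)}(H_{\bar g}^{\Sigma_i})^+\ge0$, both vanishing on $\{|x|>2S\}$, where $g_\epsilon=(\widetilde u\phi)^{4/(n-2)}\delta$ is conformally flat — so (i) and (ii) hold with $K\supset\{|x|\le 2S\}$ and $g_\epsilon$ is asymptotically flat of order $\tau-\epsilon$. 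Finally $m_{\Sigma_1,\Sigma_2}(\bar g)=m_{\Sigma_1,\Sigma_2}(\phi^{4/(n-2)}\delta)=m_{\Sigma_1,\Sigma_2}(g_R)$ exactly (the flux at radius $\rho>2S$ is that of $\phi^{4/(n-2)}\delta$) and $|m_{\Sigma_1,\Sigma_2}(g_\epsilon)-m_{\Sigma_1,\Sigma_2}(\bar g)|=O(\|R_{\bar g}^-\|_{L^1}+\sum_i\|(H_{\bar g}^{\Sigma_i})^-\|_{L^1})=O(S^{-1})$, so for $S$ large (iii) holds.

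\smallskip\textbf{Main obstacle.} Everything delicate is in (iii). A naive interpolation in Phase 2 would make $R_{\bar g}$ only $O(|x|^{-\tau-2})$ on the radius-$S$ transition annulus, so $\|R_{\bar g}^-\|_{L^1}=O(S^{n-\tau-2})$, which fails to vanish when $\tau<n-2$; the correcting factor $\widetilde u$ then carries a $|x|^{2-n}$-term of that size and the mass is not preserved. Two ingredients resolve this: (a) in Phase 1 one must genuinely use the \emph{global} integrability of $R_g$ and $H_g^{\Sigma_i}$ (not just the decay that makes the mass finite), so that the first conformal change costs arbitrarily little mass; and (b) after Phase 1 the metric $g_R$ is exactly scalar-flat and boundary-minimal near infinity, which by elliptic bootstrap forces the expansion $g_R=\delta+c_R|x|^{2-n}\delta+O(|x|^{1-n})$ with constant, pure-trace leading term, so the purely conformal model $\phi^{4/(n-2)}\delta$ with the same mass matches it and improves the transition curvature to $O(|x|^{-n-1})$ and $\|R_{\bar g}^-\|_{L^1}$ to $O(S^{-1})$. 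Establishing that expansion in the presence of the codimension-two corner (with its Neumann boundary conditions), together with the two isomorphism statements, is what genuinely requires the appendix; the rest is the conformal transformation laws and the integration-by-parts mass identity already used in Sections~\ref{sec:smooth}--\ref{sec:pf:main:thm}.
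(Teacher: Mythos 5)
Your proposal takes a genuinely different route from the paper, and as written it contains a gap that the paper's own argument deliberately avoids.

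The paper's proof is a \emph{single} cut-and-paste-plus-conformal-fix: it sets $g_R=\chi_R\,g+(1-\chi_R)g_0$, which is already Euclidean for $|x|\geq 2R$, and then solves $\tfrac{4(n-1)}{n-2}L_Rv_R=-\gamma_R$ in $M\backslash\Gamma$ with $\tfrac{2(n-1)}{n-2}B_Rv_R=-\overline\gamma_R$ on the boundary, where $\gamma_R=R_{g_R}-\chi_R R_g$ and $\overline\gamma_R=H_{g_R}-\chi_R H_g$. Because $g_R=g$ on $\{|x|\leq R\}$ and the decay $|g-g_0|=O(|x|^{-\tau})$ gives $|\gamma_R|\lesssim R^{-\tau-2}$ on the annulus, one gets $\|\gamma_R\|_{C^{0,\alpha}_{-\tau-2+\epsilon}}\to 0$, hence (via Proposition~\ref{prop:isom}) $v_R\to 0$ in $C^{2,\alpha}_{-\tau+\epsilon}(M\backslash\Gamma)\cap C^{1,\alpha}_{-\tau+\epsilon}(M)$, and $\overline g_R=u_R^{4/(n-2)}g_R$ has $R_{\overline g_R}=u_R^{-4/(n-2)}\chi_R R_g\geq 0$, $H_{\overline g_R}^{\Sigma_i}=\tfrac{1}{n-1}u_R^{-2/(n-2)}\chi_RH_g^{\Sigma_i}\geq 0$, is conformally flat for $|x|\geq 2R$, and has mass converging to $m_{\Sigma_1,\Sigma_2}(g)$. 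No asymptotic expansion of a scalar-flat metric is ever needed: the exterior metric is \emph{prescribed} to be $g_0$, so the transition-annulus curvature is tiny in the right weighted norm by construction.

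Your Phase 1 is sound and would suffice to prove (i) and (iii) alone; the trouble is Phase 2. You invoke, near a codimension-two corner, the existence of corner-adapted harmonic coordinates and the bootstrap expansion $g_R=\delta+c_R|x|^{2-n}\delta+O(|x|^{1-n})$ for scalar-flat, boundary-minimal metrics (pure-trace leading term at order $2-n$). That expansion is not established anywhere in the paper: the appendix proves only isomorphism and a priori Schauder estimates for the Laplacian with Neumann data on the two faces (Lemmas~\ref{lemma:A1}--\ref{lemma:A4} and Proposition~\ref{prop:isom}). Producing harmonic coordinates that are simultaneously compatible with both boundary faces and the corner, and then carrying out the bootstrap with the mixed Neumann conditions to extract the pure-trace coefficient and the $O(|x|^{1-n})$ remainder, is a substantial piece of analysis of the same order as the appendix itself. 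You correctly flag in your final paragraph that this is "what genuinely requires the appendix," but the appendix as it stands does not deliver it — this is the concrete missing step. The moral is that the paper's direct interpolation with $g_0$ makes the Schoen--Yau expansion step unnecessary, which is precisely why it is the route chosen there.
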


The proof of Proposition \ref{prop:conf:flat} follows the same lines as \cite[Proposition 4.1]{almaraz-barbosa-lima} (see also  \cite[Lemma 10.6]{lee-parker}) but will be presented below for convenience. The main tool is the next proposition which plays the same role of Proposition 3.3 in \cite{almaraz-barbosa-lima} and whose proof is left to the appendix section.

\begin{proposition}\label{prop:isom}
	Let $(M,\Sigma_1, \Sigma_2,g)$ be an asymptotically flat manifold, with a non-compact corner. Fix $2-n<\gamma<0$ and let 
$$T:C^{1,\alpha}_{\gamma}(M)\cap C^{2,\alpha}_{\gamma}(M\backslash \Gamma)\rightarrow C^{0,\alpha}_{\gamma-2}(M\backslash\Gamma)\times C^{1,\alpha}_{\gamma-1}(\Sigma_1\backslash\Gamma)\times C^{1,\alpha}_{\gamma-1}(\Sigma_2\backslash\Gamma)$$  
be defined by 
$$T(u)=(-\Delta_g u+hu, \d u/\d \eta_1+h_1u, \d u/\d \eta_2),$$  
where $h\in C^{0,\alpha}_{-2-\epsilon}(M)$ and $h_1\in C^{1,\alpha}_{-1-\epsilon}(\Sigma_1)$ for some small $\epsilon>0$. If $h\geq 0$ and $h_1\geq 0$, then $T$ is an isomorphism. 
\end{proposition}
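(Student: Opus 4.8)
The plan is to mimic the structure of the corresponding isomorphism statement in \cite{almaraz-barbosa-lima} (Proposition 3.3 there), adapting it to the cornered setting by reflecting across one of the faces to reduce to a half-space problem with a single remaining boundary condition. First I would observe that the homogeneous Neumann condition $\partial u/\partial\eta_2 = 0$ on $\Sigma_2$ is the crucial one to handle: since $\Sigma_1 \perp \Sigma_2$ along $\Gamma$ (or, more precisely, since the asymptotic model is the quarter-space where the two faces meet orthogonally and the metric is $C^{1,\alpha}$ up to $\Gamma$), one can perform an even reflection of $M$ across $\Sigma_2$, obtaining an asymptotically flat manifold $\widehat M$ with non-compact boundary $\widehat\Sigma_1 = \Sigma_1 \cup_\Gamma \Sigma_1$, modelled at infinity on $\mathbb R^n_+$. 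The reflected metric $\widehat g$ is then $C^{1,\alpha}$ up to $\widehat\Sigma_1$ and $C^{2,\alpha}$ away from the doubling hypersurface; the potentials $h$ and $h_1$ extend by reflection to functions $\widehat h \geq 0$ and $\widehat h_1 \geq 0$ with the same weighted decay. Solutions to $T u = (f, \varphi_1, 0)$ on $M$ correspond bijectively to solutions of the reflected Robin problem $-\Delta_{\widehat g}\widehat u + \widehat h\,\widehat u = \widehat f$ on $\widehat M$, $\partial\widehat u/\partial\widehat\eta_1 + \widehat h_1 \widehat u = \widehat\varphi_1$ on $\widehat\Sigma_1$, restricted to reflection-symmetric data and solutions. (One must check the matching of normal derivatives along $\Gamma$, which is exactly where the orthogonality is used.)

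Next I would invoke the mapping theory on manifolds with non-compact boundary: the operator $\widehat u \mapsto (-\Delta_{\widehat g}\widehat u + \widehat h\,\widehat u,\ \partial\widehat u/\partial\widehat\eta_1 + \widehat h_1\widehat u)$ between the weighted Hölder spaces is Fredholm of index zero (this follows from \cite{bartnik} together with the doubling/boundary analysis of \cite[Proposition 3.4]{almaraz-barbosa-lima}, applied to the $C^{1,\alpha}$ metric), so it suffices to prove injectivity. For injectivity, suppose $\widehat u \in C^{1,\alpha}_\gamma(\widehat M)\cap C^{2,\alpha}_\gamma(\widehat M\setminus\widehat\Gamma)$ solves the homogeneous problem. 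Since $2-n<\gamma<0$, the decay $\widehat u = O(|x|^\gamma)$ and $\nabla\widehat u = O(|x|^{\gamma-1})$ makes the boundary terms at infinity vanish when we integrate by parts, giving
$$
\int_{\widehat M}\left(|\nabla_{\widehat g}\widehat u|^2 + \widehat h\,\widehat u^2\right)dv_{\widehat g} + \int_{\widehat\Sigma_1}\widehat h_1\,\widehat u^2\,d\sigma_{\widehat g} = 0.
$$
With $\widehat h\geq 0$ and $\widehat h_1\geq 0$ this forces $\nabla_{\widehat g}\widehat u\equiv 0$, hence $\widehat u$ is constant, and the decay at infinity forces that constant to be zero. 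Therefore $T$ is injective, and being Fredholm of index zero, it is an isomorphism.

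The main obstacle I expect is the low regularity across the doubling hypersurface $\widehat\Gamma$ (the image of $\Gamma$), together with making the integration by parts rigorous there: after reflection the metric is only $C^{1,\alpha}$ (not $C^2$) across $\widehat\Gamma$, so $\widehat u$ is only $C^{2,\alpha}$ on each side, and one needs a weak-solution / distributional argument to justify that no singular contribution appears along $\widehat\Gamma$ in the Green's identity — precisely the role played by the fact that the transmission conditions (continuity of $\widehat u$ and of its $\widehat g$-conormal derivative) hold across $\widehat\Gamma$. This is handled by working with $\widehat u \in W^{1,q}_{loc}$ weak solutions, exactly as the coefficients of $g$ lie in $W^{1,p}_{loc}$ as noted in the proof of Proposition \ref{propo:lim:u}; elliptic regularity then upgrades $\widehat u$ to the stated Hölder class away from $\widehat\Gamma$. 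A secondary technical point is verifying that the reflection genuinely produces an asymptotically flat manifold with non-compact boundary in the sense required by \cite{almaraz-barbosa-lima} — this is where the hypothesis $\Sigma_1\perp\Sigma_2$ enters a second time, ensuring the doubled asymptotic model is $\mathbb R^n_+$ rather than a cone.
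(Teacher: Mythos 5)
Your proposal takes a genuinely different route from the paper. The paper does \emph{not} reduce the cornered problem to the half-space result of \cite{almaraz-barbosa-lima} by doubling; instead, it re-derives the whole weighted elliptic package directly on the quarter-space model: an explicit Green's function on $\mathbb R^n_{\mathbb L}$ built by double reflection (Lemma \ref{lemma:A1}), weighted interior and boundary Schauder estimates on the cornered manifold (Lemma \ref{lemma:A2}), an a priori estimate without lower-order terms assuming injectivity (Lemma \ref{lemma:A3}), the Euclidean model case (Lemma \ref{lemma:A4}), and finally a continuity method showing the set of isomorphisms is open, closed and nonempty among the operators in question. Doubling across $\Sigma_2$ appears in the paper only at one isolated point — to invoke the Hopf lemma and conclude injectivity from the maximum principle. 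Your injectivity argument via integration by parts is a clean alternative to that step and works fine (the boundary contribution over $\Sigma_2$ vanishes because the $\Sigma_2$ condition is pure Neumann, and $u\in C^{1,\alpha}_\gamma(M)$ justifies the integration by parts near $\Gamma$).

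The genuine gap in your proposal is the claim that the doubled operator is Fredholm of index zero by appealing to \cite{almaraz-barbosa-lima}. After reflecting across $\Sigma_2$, the metric $\widehat g$ is only $C^{1,\alpha}$ across the \emph{entire} non-compact interior hypersurface that is the image of $\Sigma_2$ — not merely across a compact set. The Fredholm/isomorphism theory of \cite{almaraz-barbosa-lima} in weighted Hölder spaces assumes $g\in C^{1,\alpha}(M)\cap C^{2,\alpha}(M\backslash\partial M)$ (see the footnote to Theorem \ref{main:thm}); with only $C^{1,\alpha}$ regularity across an interior hypersurface extending to infinity, $\Delta_{\widehat g}\widehat u$ will not in general lie in $C^{0,\alpha}_{\gamma-2}(\widehat M)$, so the target space and the Schauder a priori estimates underpinning the Fredholm property are no longer available. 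One cannot repair this by first applying a conformal deformation to make $\Sigma_2$ totally geodesic outside a compact set (so that the doubled metric becomes $C^{2,\alpha}$ there): that deformation is exactly Proposition \ref{prop:conf:flat}, whose proof requires Proposition \ref{prop:isom}, so the argument would be circular. Thus your reduction does not save the work of establishing the elliptic estimates in the cornered setting — it relocates it — and as written the Fredholm-index-zero step is unjustified, which is precisely what the paper's Lemmas \ref{lemma:A1}--\ref{lemma:A4} and the ensuing continuity argument supply.
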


\begin{remark}
In this paper, we only use the special case where both $h$ and $h_1$ vanish in Proposition \ref{prop:isom}. 
\end{remark}

\begin{proof}[Proof of Proposition \ref{prop:conf:flat}]
Let $\chi:\R\to [0,1]$ be a smooth cut-off function such that $\chi(t)=1$ for $t\leq 1$ and $\chi(t)=0$ for $t\geq 2$. 
For $R>0$ large define $\chi_R(x)=\chi(R^{-1}r(x))$ and set $g_R=\chi_R g+(1-\chi_R)g_0$, where $g_0$ stands for the Euclidean metric. 
We will solve
\begin{equation}\label{sistema:uR}
\begin{cases}
L_{g_R}u_R=\chi_R R_g u_R&\text{in}\:M\backslash\Gamma,
\\
B_{g_R}u_R=\chi_R H_g u_R&\text{on}\:(\Sigma_1\cup\Sigma_2)\backslash\Gamma,
\end{cases}
\end{equation}
for $u_R>0$ and $R$ large enough, and check that the conformal metric $\overline g_R=u_R^{\frac{4}{n-2}}g_R$ has all the desired properties.
We write $u_R=1+v_R$ and set 
$$
L_R=-\Delta_{g_R}+\frac{n-2}{4(n-1)}\gamma_R, \quad
B_R=\frac{\d}{\d\eta_{g_R}}+\frac{n-2}{2(n-1)}\overline{\gamma}_R,
$$
where $\gamma_R=R_{g_R}-\chi_R R_g$ and $\overline{\gamma}_R=H_{g_R}-\chi_R H_g$.  
Observe that $H_g|_{\Sigma_i\backslash\Gamma}=H_g^{\Sigma_i}$, $i=1,2$. 
Thus, (\ref{sistema:uR}) is equivalent to
\begin{equation}\label{sistema:psiR}
\begin{cases}
\frac{4(n-1)}{n-2} L_Rv_R=-\gamma_R&\text{in}\:M\backslash\Gamma,
\\
\frac{2(n-1)}{n-2} B_Rv_R=-\overline{\gamma}_R&\text{on}\:(\Sigma_1\cup\Sigma_2)\backslash\Gamma.
\end{cases}
\end{equation}
For any $\e>0$ we have $\|\gamma_R\|_{C^{0,\a}_{-\tau-2+\e}(M\backslash\Gamma)}\to 0$ and  $\|\overline{\gamma}_R\|_{C^{1,\a}_{-\tau-1+\e}((\Sigma_1\cup\Sigma_2)\backslash\Gamma)}\to 0$ as $R\to \infty$. 
In what follows, we solve (\ref{sistema:psiR}) uniquely for $v_R\in C^{2,\a}_{-\tau+\e}(M\backslash\Gamma)\cap C^{1,\a}_{-\tau+\e}(M)$, with $\|v_R\|_{C^{2,\a}_{-\tau+\e}(M\backslash\Gamma)}\to 0$ and $\|v_R\|_{C^{1,\a}_{-\tau+\e}(M)}\to 0$ as $R\to \infty$. 

Fix $0<\e<\tau-\frac{n-2}{2}$.  According to Proposition \ref{prop:isom}, the operator
$$
T:C^{2,\a}_{-\tau+\e}(M\backslash\Gamma)\cap C^{1,\a}_{-\tau+\e}(M)\to C^{0,\a}_{-\tau+\e-2}(M\backslash\Gamma)\times C^{1,\a}_{-\tau+\e-1}((\Sigma_1\cup\Sigma_2)\backslash\Gamma)
$$ 
defined by $T u=(\Delta_gu,\d u/\d\eta_g)$ is an isomorphism, where $\eta_g|_{\Sigma_i\backslash\Gamma}=\eta_g^{\Sigma_i}$, $i=1,2$.
Here, we are seeing the intersection of spaces $C^{2,\a}_{-\tau+\e}(M\backslash\Gamma)$ and $C^{1,\a}_{-\tau+\e}(M)$ with the sum of their two respective norms. 
Set $\displaystyle T_Ru=(L_Ru,B_Ru)$.
It follows from the directly checked estimates 
$$
\|(\Delta_{g_R}-\Delta_g)u)\|_{ C^{0,\a}_{-\tau+\e-2}(M\backslash\Gamma)}\leq C\|g_R-g\|_{ C^{1,\a}_{0}(M\backslash\Gamma)}\|u\|_{ C^{2,\a}_{-\tau+\e}(M\backslash\Gamma)}\;,
$$
$$
\big\|\frac{\d u}{\d\eta_{g_R}}-\frac{\d u}{\d\eta_{g}}\big\|_{ C^{1,\a}_{-\tau+\e-1}((\Sigma_1\cup\Sigma_2)\backslash\Gamma)}\leq 
C\|g_R-g\|_{ C^{1,\a}_{0}(M\backslash\Gamma)}\|u\|_{ C^{2,\a}_{-\tau+\e}(M\backslash\Gamma)}\;,
$$
$$
\|\gamma_R u\|_{ C^{0,\a}_{-\tau+\e-2}(M\backslash\Gamma)}\leq 
C\|\gamma_R\|_{ C^{0,\a}_{-2}(M\backslash\Gamma)}\|u\|_{ C^{0,\a}_{-\tau+\e}(M)}\;,
$$
and
$$
\|\overline{\gamma}_R u\|_{ C^{1,\a}_{-\tau+\e-1}((\Sigma_1\cup\Sigma_2)\backslash\Gamma)}\leq 
C\|\overline{\gamma}_R\|_{ C^{1,\a}_{-1}((\Sigma_1\cup\Sigma_2)\backslash\Gamma)}\|u\|_{ C^{1,\a}_{-\tau+\e}(M)}
$$
for some constant $C>0$, that $T_R-T$ is arbitrarily small in the operator norm as $R\to\infty$. From this we conclude that $T_R$ is also an isomorphism for large $R$, which provides a unique solution $v_R$ to (\ref{sistema:psiR}). 

Now we can choose $g_\epsilon=\overline{g}_R$ for $R$ large, proving (i) and (ii).
It is easy to prove that $\overline g_R\to g$ in the space of asymptotically flat metrics with order $\tau-\epsilon$ in the sense that 
$$\|\overline{g}_R-g\|_{C^{1,\alpha}_{-\tau+\epsilon}}(M)\to 0,$$ and 
$$\|R_{\overline{g}_R}-R_g\|_{L^1(M)}+\|H_{\overline{g}_R}-H_g\|_{L^1(\d M)}\to 0 $$ as $R\to\infty$, so that the property (iii) also holds.
\end{proof}

Now we are ready to prove Theorem \ref{appl:thm} by using a doubling argument.

Observe that, under the asymptotic coordinates, $\Sigma_1$ and $\Sigma_2$ correspond to the hypersurfaces $\{x_1=0\}$ and $\{x_n=0\}$, respectively. Hence, we can assume that $\Sigma_1$ and $\Sigma_2$ are umbilic outside $K$ with respect to the conformally flat metric $g_\epsilon$. Since $\Sigma_1$ and $\Sigma_2$ are umbilic satisfying $H^{\Sigma_1}_{g_\epsilon}=H^{\Sigma_2}_{g_\epsilon}=0$ outside $K$, it follows that $\Sigma_1$ and $\Sigma_2$ are totally geodesic outside $K$ with respect to $g_\epsilon$.

The idea now will be to double $M$ along $\Sigma_1$ or $\Sigma_2$ to obtain a manifold in the setting of Theorem \ref{main:thm}. Due to notation convenience, we choose to double along $\Sigma_2$. Precisely, we consider the manifold $\widetilde M=M\times \{0,1\}/\sim$, where $(p,0)\sim (p,1)$ if and only if $p\in\Sigma_2$, and the metric $\widetilde g_\epsilon\left([(p,j)]\right)=g_\epsilon(p)$ for all $p\in M$, $j=0,1$. We claim that $\widetilde g_\epsilon$ is of class $C^{2,\alpha}$ on $\widetilde M\backslash \widetilde K$, where $\widetilde K$ is the doubling of $K$, i.e., $\widetilde K=\left\{[(p,j)]\,;\:p\in K, j=0,1\right\}$.

Before proceeding, we shall clarify the topological smooth structure given to $\widetilde M$ near $\Gamma=\Sigma_1\cap\Sigma_2$. Let $p\in\Gamma$ and consider any local coordinates 
$$
\Phi:U\subset M\to\mathbb R^n_{\mathbb L},\:\:\Phi(p)=0,
$$
where $U$ is an open neighbourhood of $p$. Choosing $U$ maybe smaller, we can assume that
$$
\Phi(U)=\{z\in \mathbb R^n_{\mathbb L}\,;\:|z|<1\}.
$$
A coordinate chart on $\widetilde M$ around the equivalence class $[(p,0)]=[(p,1)]\in\widetilde M$ is obtained in the following way. We define
$$
\Psi: \widetilde U\to\mathbb \{z\in \mathbb R^n_+\,;\:|z|<1\}
$$
on $\widetilde U=\left\{[(q,j)]\in\widetilde M\,;\:q\in U, \:j=0,1 \right\}$ by 
$$
\Psi([q,0])=\Phi(q)\qquad\text{and}\qquad \Psi([q,1])=\widetilde{\Phi(q)},
$$
where we are denoting $\widetilde z=(z_1,...,z_{n-1}, -z_n)$ for $z=(z_1,...,z_n)$.
With that structure, $\widetilde M$ is a smooth manifold with (smooth) boundary $\partial\widetilde M$ given by two copies of $\Sigma_1$ glued along its boundary $\Gamma$.

Let us prove that $\widetilde{g}_\epsilon\in C^{2,\alpha}\left(\widetilde M\backslash(\partial\widetilde M\cup(\Sigma_2\cap K))\right)\cap  C^{1,\alpha}(\widetilde M\backslash(\Sigma_2\cap K))$. 
We fix a chart $\Phi$ as above such that the coordinates are Fermi with respect to $\Sigma_2$ and the domain of $\Phi$ is contained in $M\backslash K$. It means that the coordinate curves $t\rightarrow \Phi^{-1}(x_1,...,x_{n-1},t)$ are unit geodesics of $M$ orthogonal to $\Sigma_2$. The fact that $\Sigma_1$ is totally geodesic outside $K$ ensures that those curves remain tangent to $\Sigma_1$ when $x_1=0$. Thus in the coordinates induced by $\Phi$, $(g_\epsilon)_{an}=\delta_{an}$, $a=1,...,n$, and the fact that $\Sigma_2$ is totally geodesic implies that $(g_\epsilon)_{ij,n}=0$ on $x_n=0$ for $i,j=1,..., n-1$. 
As one can check, this ensures that $g$ is of class $C^{2,\alpha}$ across the interior of $\Sigma_2$ outside $K$. This implies the claimed regularity for $\widetilde{g}_\epsilon$.

We claim that $m_{\partial\widetilde M}(\widetilde g_\epsilon)\geq 0$ so we need to make sure the hypotheses of Theorem \ref{main:thm} are fulfilled. 
Extend the hypersurface $\Sigma_2\cap K$ to a compact hypersurface $\Sigma$ with boundary $\partial \Sigma= \Sigma\cap\partial M$ in such a way that $\Sigma$ is orthogonal to $\partial M$ (see Figure \ref{picture}). The open subset $\Omega\subset \widetilde M$ satisfying $\partial \Omega=\Sigma\cup (\Omega\cap\partial M)$ is such that  
$$
\widetilde g^-_\epsilon:=\widetilde g_\epsilon|_\Omega\in C^{1,\alpha}(\Omega)\cap C^{2,\alpha}(\text{int}\,\Omega),
$$
$$
\widetilde g^+_\epsilon:=\widetilde g_\epsilon|_{M\backslash\overline\Omega}\in C^{1,\alpha}(M\backslash\overline\Omega)\cap C^{2,\alpha}(\text{int}\,(M\backslash\overline\Omega)),
$$
and $\widetilde g^-_\epsilon$ and $\widetilde g^+_\epsilon$ are $C^2$ up to $\Sigma\backslash \partial\Sigma$.

In order to verify that $H^\Sigma_{\widetilde g^-_\epsilon}\geq H^\Sigma_{\widetilde g^+_\epsilon}$, observe that 
$$
H^\Sigma_{\widetilde g^-_\epsilon}\Big |_{\Sigma_2\cap K}=H^{\Sigma_2}_{g_\epsilon}\Big |_{\Sigma_2\cap K}\geq 0
$$
and 
$$
H^\Sigma_{\widetilde g^+_\epsilon}\Big |_{\Sigma_2\cap K}=-H^{\Sigma_2}_{g_\epsilon}\Big |_{\Sigma_2\cap K}.
$$
On the other hand, since $\widetilde g_\epsilon$ is $C^{2,\alpha}$ across $\Sigma$ outside $K$, $H^\Sigma_{\widetilde g^-_\epsilon}$ and $H^\Sigma_{\widetilde g^+_\epsilon}$ coincide on that region.

\begin{figure}
  \includegraphics[width=\linewidth]{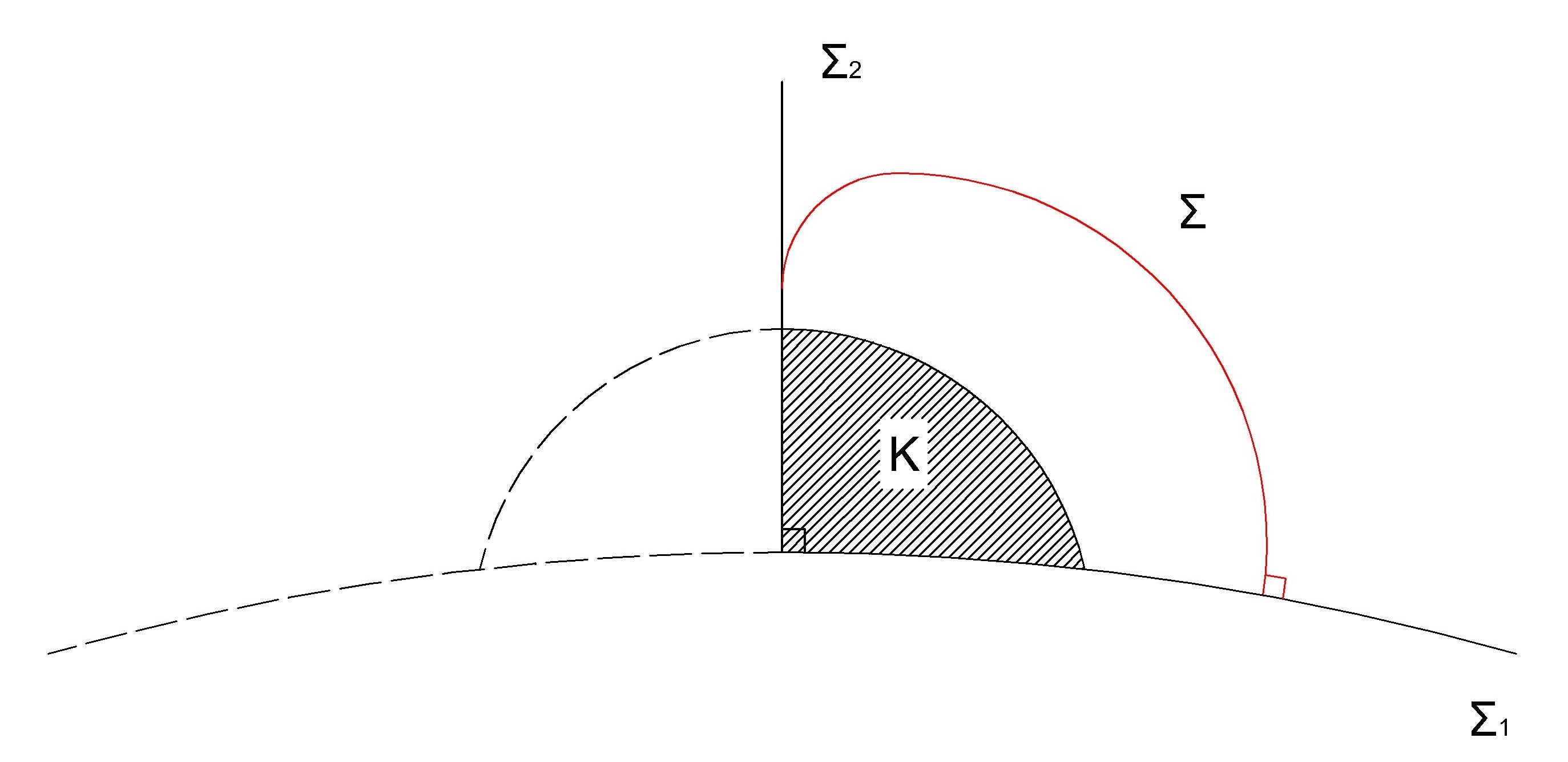}
  \caption{The hypersurface $\Sigma$.}
  \label{picture}
\end{figure}

So we conclude that $m_{\partial M}(\widetilde g_\epsilon)\geq 0$. 
To finish the proof of Theorem \ref{appl:thm} we need to relate the mass $m_{\Sigma_1, \Sigma_2}(g_\epsilon)$ of $M$ with the mass $m_{\partial M}(\widetilde g_\epsilon)$ of $\widetilde M$.
Since $\Sigma_1$ and $\Sigma_2$ are conformally Euclidean outside a compact set with respect to $g_\epsilon$, the last two terms in \eqref{mass:corner} vanish so that
$$
m_{\Sigma_1,\Sigma_2}(g_\epsilon)=\lim_{\rho\to\infty}
\sum_{a,b=1}^n\int_{x\in\mathbb R^n_{\mathbb L},\,|x|=\rho} ((g_\epsilon)_{ab,b}-(g_\epsilon)_{bb,a})\frac{x_a}{|x|}.
$$
On the other hand,
$$
m_{\partial\widetilde M}(\widetilde g_\epsilon)=\lim_{\rho\to\infty}
\sum_{a,b=1}^n\int_{x\in\mathbb R^n_{+},\,|x|=\rho} ((\widetilde g_\epsilon)_{ab,b}-(\widetilde g_\epsilon)_{bb,a})\frac{x_a}{|x|},
$$
and clearly $m_{\partial\widetilde M}(\widetilde g_\epsilon)=2m_{\Sigma_1,\Sigma_2}(g_\epsilon)$. This proves Theorem \ref{appl:thm}.


\appendix


\section{Appendix}

In this appendix, we give a proof of Proposition \ref{prop:isom} which is inspired by the case of manifolds with smooth boundary in \cite[Appendix A]{almaraz-barbosa-lima}. It builds on a series of lemmas for which we only give the critical parts of proofs.

Let us first state an existence and regularity result which is needed in the proof of the next lemmas. Let $(N,\Sigma_1',\Sigma_2', g')$ be a compact cornered space (see Section \ref{sec:application}). Denote by $\Gamma'$ the corner $\Sigma_1'\cap\Sigma_2'$ and assume that $\Sigma_1'$ and $\Sigma_2'$ are orthogonal along $\Gamma'$.  
Set $Lu=\Delta_g+h$ and $B_iu=\partial u/\partial \eta_i+h_iu$, $i=1,2$, where $h\in C^{0,\alpha}(N)$ and $h_i\in C^{1,\alpha}(\Sigma_i')$ for some $0<\alpha<1.$
Consider the problem
\begin{equation}\label{eq:cpct:problem}
	\begin{cases}
		Lu=f,&N\backslash\Gamma',
		\\
		B_1u=f_1, &\Sigma_1'\backslash\Gamma',
		\\
		B_2u=f_2, &\Sigma_2'\backslash\Gamma',
	\end{cases}
\end{equation}
where $f\in C^{0,\alpha}(N\backslash\Gamma')$ and $f_i\in C^{1,\alpha}(\Sigma_i'\backslash\Gamma')$.
\begin{proposition}\label{propo:exist:cpct}
	Suppose there is no non-trivial solution $u$ to \eqref{eq:cpct:problem} with $f=0$ and $f_i=0$, $i=1,2$. Then, given $f\in C^{0,\alpha}(N\backslash\Gamma')$ and $f_i\in C^{1,\alpha}(\Sigma_i'\backslash\Gamma')$, $i=1,2$, there is a solution $u\in C^{2,\alpha}(N\backslash \Gamma')\cap C^{1,\alpha}(N)$ to \eqref{eq:cpct:problem} and there is $C=C(N,\Sigma'_1,\Sigma'_2, g')$ satisfying
$$
\|u\|_{C^{2,\alpha}(N\backslash\Gamma')}+\|u\|_{C^{1,\alpha}(N)}\leq
C(\|f\|_{C^{0,\alpha}(N\backslash\Gamma')}+\|f_1\|_{C^{1,\alpha}(\Sigma'_1\backslash\Gamma')}+\|f_2\|_{C^{1,\alpha}(\Sigma'_2\backslash\Gamma')}).
$$
\end{proposition}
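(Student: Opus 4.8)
The plan is to reduce the mixed boundary-value problem \eqref{eq:cpct:problem} to a standard Fredholm-type statement for second-order elliptic operators with oblique (Neumann-type) boundary conditions, handling the codimension-two corner $\Gamma'$ by a local reflection/folding argument across the two orthogonal faces. First I would recall that on a compact manifold with smooth boundary the operator $(L,B)$ with $Bu=\partial_\eta u+hu$ is Fredholm of index zero between the appropriate Hölder spaces; this is classical (Agmon--Douglis--Nirenberg, or the boundary Schauder theory as used in \cite{almaraz-barbosa-lima}). The only genuinely new feature is the corner $\Gamma'=\Sigma_1'\cap\Sigma_2'$, so the work concentrates in a neighbourhood of $\Gamma'$, while away from $\Gamma'$ the interior and boundary Schauder estimates are already available.

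Near a point $p\in\Gamma'$ I would use the orthogonality hypothesis $\Sigma_1'\perp\Sigma_2'$ along $\Gamma'$ to choose local coordinates (Fermi-type with respect to both faces) in which, near $p$, the domain looks like $\mathbb R^n_{\mathbb L}=\{x_1\ge 0,\,x_n\ge 0\}$ with $\Sigma_i'$ corresponding to the two coordinate hyperplanes and the metric satisfying $g_{1a}=\delta_{1a}$, $g_{na}=\delta_{na}$ to first order on the respective faces; this is exactly the kind of coordinates exploited in Section~\ref{sec:application} for the doubling argument. One then reflects first across $\{x_n=0\}$ (doubling $\Sigma_2'$) and then across $\{x_1=0\}$ (or reflects simultaneously in both variables), extending $u$, $f$, $f_1$, $f_2$, and the coefficients $h,h_i$ evenly in $x_n$ and in $x_1$. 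Because the Neumann-type conditions $B_iu=f_i$ are compatible with even reflection (the normal derivative changes sign, matching the reflected normal), the reflected data and coefficients remain of class $C^{0,\alpha}$, resp.\ $C^{1,\alpha}$, and the reflected function solves a uniformly elliptic equation with only a Neumann condition on one face (or no boundary at all after the double reflection) on a neighbourhood with smooth boundary. Applying the standard interior/boundary Schauder estimates to the reflected problem and restricting back gives the local estimate $\|u\|_{C^{2,\alpha}}\lesssim \|f\|_{C^{0,\alpha}}+\|f_1\|_{C^{1,\alpha}}+\|f_2\|_{C^{1,\alpha}}+\|u\|_{C^0}$ near $\Gamma'$; patching with the away-from-$\Gamma'$ estimates via a partition of unity yields the global a priori bound $\|u\|_{C^{2,\alpha}(N\backslash\Gamma')}+\|u\|_{C^{1,\alpha}(N)}\lesssim \|f\|+\|f_1\|+\|f_2\|+\|u\|_{C^0}$.

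With the a priori estimate in hand, existence follows by the usual Fredholm argument. One sets up the bounded linear map $\mathcal L:u\mapsto(Lu,B_1u,B_2u)$ between the stated Banach spaces, shows it is semi-Fredholm using the estimate together with the compact embedding $C^{1,\alpha}\hookrightarrow C^0$ to absorb the $\|u\|_{C^0}$ term (so the kernel is finite-dimensional and the range is closed), and then either invokes the method of continuity — deforming $h,h_i$ to $0$, where $\mathcal L$ is the model oblique problem whose Fredholm index is zero — or directly quotes that such boundary-value problems have index zero. The hypothesis that the homogeneous problem has only the trivial solution then forces surjectivity, and the estimate with $\|u\|_{C^0}$ removed (again absorbing it by a standard contradiction/compactness argument using injectivity) gives the claimed bound with constant $C=C(N,\Sigma_1',\Sigma_2',g')$.

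The main obstacle I anticipate is verifying that the reflection across the two faces genuinely preserves the Hölder regularity of the \emph{coefficients} and produces a problem to which off-the-shelf Schauder theory applies — in particular checking that, in the chosen corner coordinates, the metric components (hence the coefficients of $\Delta_g$) extend to $C^{0,\alpha}$ functions after even reflection, which is where the orthogonality $\Sigma_1'\perp\Sigma_2'$ and the right choice of Fermi-type coordinates are essential (otherwise cross terms like $g_{1n}$ would reflect oddly and destroy continuity). A secondary technical point is the compatibility of the two boundary conditions at $\Gamma'$ itself: one must make sure the double reflection is consistent (reflecting in $x_1$ then $x_n$ gives the same extension as reflecting in $x_n$ then $x_1$), which again follows from orthogonality and the product-like structure of the coordinates near $\Gamma'$. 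Once these local points are settled, the global argument is routine and parallels \cite[Appendix A]{almaraz-barbosa-lima}.
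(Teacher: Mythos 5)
Your reflection approach is a genuinely different route from the paper's. The paper disposes of Proposition~\ref{propo:exist:cpct} in one line, citing Lieberman's Schauder theory for oblique derivative problems in Lipschitz domains \cite{lieberman2} (which handles quarter-space corners and discontinuous boundary conormals directly) together with an embedding from \cite{lieberman} to extract the $C^{1,\alpha}(N)$ control up to $\Gamma'$. You instead propose to remove the corner by two successive even reflections and then apply smooth-domain Schauder theory; this is more self-contained and in the same spirit as the doubling arguments the paper uses elsewhere (the Green's function in Lemma~\ref{lemma:A1}, the maximum-principle step in the proof of Proposition~\ref{prop:isom}). If it works, it even yields unweighted $C^{2,\alpha}$ control near $\Gamma'$, which is slightly stronger than what is claimed.

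There is, however, a gap in the reflection step that your ``main obstacle'' paragraph circles around but does not actually hit. Fermi coordinates indeed kill the off-diagonal \emph{second-order} coefficients $g^{1a}$ and $g^{na}$ on the respective faces, so the principal part reflects to (Lipschitz, hence $C^{0,\alpha}$) coefficients; worrying about $g_{1n}$ is worrying about the wrong term. What does \emph{not} reflect to a continuous coefficient is the first-order part of $\Delta_g$. In Fermi coordinates relative to $\Sigma_2'$, the coefficient of $\partial_n$ in $\Delta_g$ equals $\partial_n\log\sqrt{\det g}$, which is (up to sign) the mean curvature of the level sets $\{x_n=\text{const}\}$ and is generically nonzero on $\Sigma_2'$. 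For the evenly reflected $u$ to satisfy the reflected equation, this coefficient must be extended \emph{oddly}, producing a genuine jump across $\{x_n=0\}$; the same happens across $\{x_1=0\}$ after the second reflection. So the reflected operator does not have $C^{0,\alpha}$ lower-order coefficients, and ``off-the-shelf Schauder theory'' cannot be applied verbatim to the reflected problem.

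This gap is repairable but nontrivial, and the repair should be made explicit if you want the argument to be complete. Once the Neumann data on $\Sigma_2'$ have been normalized to zero by subtraction, $\partial_n\tilde u$ vanishes on $\{x_n=0\}$, and the product of a bounded, piecewise-$C^{0,\alpha}$ coefficient with a $C^{0,\alpha}$ function vanishing on the interface is again $C^{0,\alpha}$; hence $\tilde b^n\partial_n\tilde u$ can be moved to the right-hand side, after which Schauder estimates for the equation with the remaining (genuinely $C^{0,\alpha}$) coefficients apply. But this requires an a priori $C^{1,\alpha}$ bound on $\tilde u$ to even get started — for which one would invoke $W^{2,p}$ theory (which tolerates merely measurable bounded first-order coefficients and Lipschitz principal part) and Sobolev embedding — and then a bootstrap. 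None of this is automatic, and without it the phrase ``apply standard Schauder estimates to the reflected problem'' does not land.
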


\begin{proof}
	See for example Theorem 3.2 in \cite{lieberman2} together with the embedding result on the first page of \cite{lieberman}.
\end{proof}

Now let us introduce the lemmas to be used.
	\begin{lemma}\label{lemma:A1} Set $\Gamma=\{x\in\partial \mathbb R^n_{\mathbb L}\,;\:x_1=x_n=0\}$. 
		If $2-n<\gamma<0$, there exists $C=C(n)>0$ such that for all $u\in C^2_{\gamma}(\R^n_{\mathbb L}\backslash\Gamma)\cap C^1_{\gamma}(\R^n_{\mathbb L})$, we have 
		$$\|u\|_{C_{\gamma}^0(\R^n_{\mathbb L})}\leq C\(\|\Delta u\|_{C_{\gamma-2}^0(\R^n_{\mathbb L}\backslash\Gamma)}+\|\d u/\d x_n\|_{C_{\gamma-1}^0(\d \R^n_{\mathbb L}\cap \{x_n= 0\})}+\|\d u/\d x_1\|_{C_{\gamma-1}^0(\d \R^n_{\mathbb L}\cap \{x_1= 0\})}\).$$
	\end{lemma}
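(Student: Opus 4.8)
The plan is to establish this weighted $C^0$ estimate by a scaling-and-blowup (contradiction) argument, which is the standard route for such inequalities on $\mathbb R^n_{\mathbb L}$ and fits the pattern used for manifolds with smooth boundary in \cite[Appendix A]{almaraz-barbosa-lima}. Suppose the estimate fails; then there is a sequence $u_k\in C^2_{\gamma}(\mathbb R^n_{\mathbb L}\backslash\Gamma)\cap C^1_{\gamma}(\mathbb R^n_{\mathbb L})$ with $\|u_k\|_{C^0_\gamma(\mathbb R^n_{\mathbb L})}=1$ but with the right-hand side tending to $0$. Pick points $x_k$ where $r(x_k)^{-\gamma}|u_k(x_k)|\geq 1/2$, and rescale: set $\lambda_k=|x_k|$ and $v_k(y)=\lambda_k^{-\gamma}u_k(\lambda_k y)$, so that $|y_k|=1$ for $y_k=x_k/\lambda_k$ and $|v_k(y_k)|\ge 1/2$. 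The homogeneity of the Laplacian and of the normal derivatives under this scaling, together with the weights $\gamma$, $\gamma-1$, $\gamma-2$, is arranged precisely so that $\|\Delta v_k\|_{C^0_{\gamma-2}}$, $\|\partial v_k/\partial x_n\|_{C^0_{\gamma-1}}$ and $\|\partial v_k/\partial x_1\|_{C^0_{\gamma-1}}$ all still go to $0$, while $\|v_k\|_{C^0_\gamma}\le 1$.

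Next I would extract a limit. Up to a subsequence, $y_k\to y_\infty$ with $|y_\infty|=1$. Two cases arise depending on whether the sequence $y_k$ stays a definite distance from the corner $\Gamma$ or approaches it; in either case, interior and boundary elliptic estimates (Proposition \ref{propo:exist:cpct} applied on compact pieces, or standard Schauder estimates away from $\Gamma$) give local $C^{2,\alpha}$ bounds on $v_k$ on compact subsets of $\mathbb R^n_{\mathbb L}\backslash\Gamma$ and $C^{1,\alpha}$ bounds up to $\Gamma$, so a subsequence converges in $C^2_{loc}(\mathbb R^n_{\mathbb L}\backslash\Gamma)\cap C^1_{loc}(\mathbb R^n_{\mathbb L})$ to a limit $v_\infty$. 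The limit satisfies $\Delta v_\infty=0$ on $\mathbb R^n_{\mathbb L}\backslash\Gamma$, $\partial v_\infty/\partial x_n=0$ on $\{x_n=0\}$, $\partial v_\infty/\partial x_1=0$ on $\{x_1=0\}$, obeys the growth bound $|v_\infty(y)|\le C|y|^{\gamma}$, and is nonzero since $|v_\infty(y_\infty)|\ge 1/2$.

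To reach a contradiction I would use a Liouville-type argument: reflect $v_\infty$ across $\{x_n=0\}$ and then across $\{x_1=0\}$ (Neumann data makes both reflections $C^1$ and harmonic, using that the two hyperplanes are orthogonal) to obtain a harmonic function on all of $\mathbb R^n$ with $|v_\infty(y)|\le C|y|^{\gamma}$. Since $\gamma<0$, a harmonic function on $\mathbb R^n$ decaying at infinity is identically $0$ (e.g.\ by the mean value property or by a standard gradient estimate letting the ball radius go to infinity), contradicting $v_\infty(y_\infty)\neq 0$. A subtlety is the behaviour near $\Gamma$: one must check $v_\infty$ has no singularity there that survives reflection — this follows because the $C^1_\gamma$ bound up to $\Gamma$ is preserved in the limit and $\gamma>2-n$ rules out a nontrivial singular harmonic contribution supported on the codimension-two set $\Gamma$ (removable singularity across a set of codimension $\ge 2$).

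The main obstacle I expect is the analysis at the corner $\Gamma$: one needs the local regularity theory (Proposition \ref{propo:exist:cpct}) to be strong enough to pass to the limit up to $\Gamma$, and one needs the removable-singularity argument across the codimension-two corner after the double reflection, which is where the hypothesis $\gamma>2-n$ is used. Away from $\Gamma$ everything is classical; the corner is the only genuinely delicate point, and it is exactly the reason the statement is phrased with $C^2_\gamma$ away from $\Gamma$ but only $C^1_\gamma$ up to it.
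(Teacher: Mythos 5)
Your blow-up/Liouville proof is a genuinely different route from the paper's. The paper gives a \emph{direct, constructive} argument: it builds an explicit Green's function on $\mathbb R^n_{\mathbb L}$ by a double reflection,
$$G(x,y)=\psi(x,y)+\psi(x,y'),\qquad \psi(x,y)=|x-y|^{2-n}+|x-\widetilde y|^{2-n},$$
(with $\widetilde y$, $y'$ the reflections of $y$ across $\{x_n=0\}$ and $\{x_1=0\}$ respectively), checks that $G$ is harmonic in $x$ away from $y$ with vanishing normal derivative on both boundary faces, writes Green's representation formula for $u(y)$ as a sum of volume and boundary integrals against $\Delta u$, $\partial u/\partial x_1$, $\partial u/\partial x_n$, and then uses the elementary convolution estimate $\int_{\mathbb R^n_{\mathbb L}}|x-y|^{2-n}|x|^{\gamma-2}dx+\int_{\partial\mathbb R^n_{\mathbb L}}|x-y|^{2-n}|x|^{\gamma-1}d\sigma\le C(n)|y|^\gamma$ (valid exactly because $2-n<\gamma<0$) to conclude. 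Your argument, by contrast, is indirect: normalize, choose near-extremal points, rescale, extract a limiting harmonic function with Neumann data on both faces, reflect twice to get a harmonic function on $\mathbb R^n$, and invoke a Liouville theorem. Both work. The paper's method is shorter and makes the role of the hypothesis $2-n<\gamma<0$ completely transparent in a single integral inequality; your method is more flexible (it would survive a perturbation of the metric, which the Green's-function argument would not without modification), but it needs more machinery: interior and corner compactness, removable singularities across the codimension-two corner, and Liouville.

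One technical point you should tighten: to extract a convergent subsequence from $v_k$, you cannot get \emph{local} $C^{2,\alpha}$ bounds from the available data, since you only control $\Delta v_k$ in (weighted) $C^0$, not $C^{0,\alpha}$. You should instead invoke interior and boundary $W^{2,p}$ estimates (and the corresponding corner $W^{2,p}$ theory for the orthogonal wedge, where the opening angle $\pi/2\le\pi$ keeps the estimates valid), which together with Sobolev embedding give uniform local $C^{1,\alpha}$ bounds up to $\Gamma$; this is enough compactness to pass to the limit and to make the reflected function $C^1$, hence removes the singularity along $\Gamma$ as you intend. Alternatively one could note that it is only $C^1_{loc}$ convergence that is actually needed. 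Also, Proposition \ref{propo:exist:cpct} as stated is a Fredholm-alternative/existence result for the global boundary value problem on a compact cornered space, not an a priori interior estimate; cutting a compact piece out of $\mathbb R^n_{\mathbb L}$ introduces an extra boundary where you have no data, so you would have to formulate the local estimate near the corner directly (via the cited work of Lieberman), rather than citing that proposition literally. Neither issue is fatal, but both need to be addressed if the proof is to be written out.
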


\begin{proof}
For $x=(x_1,...,x_n)$ and $y=(y_1,...,y_n)$ in $\mathbb R^n$ we set $\widetilde y=(y_1,...,y_{n-1},-y_n)$ and $y'=(-y_1,y_2,...,y_n)$, and define
$$
G(x,y)=\psi(x,y)+\psi(x,y')\,,
\qquad 
\psi(x,y)=|x-y|^{2-n}+|x-\widetilde y|^{2-n}.
$$ 
Observe that for any $x,y\in\mathbb R^n_{\mathbb L}$ with $x\neq y$ we have $\Delta_x G(x,y)=0$, 
$$
\frac{\partial}{\partial x_1}G(x,y)=0, \:\text{if}\:x_1=0
$$
and
$$
\frac{\partial}{\partial x_n}G(x,y)=0, \:\text{if}\:x_n=0.
$$
We proceed as in \cite[Lemma A.1]{almaraz-barbosa-lima} by using Green's formula to arrive at
\begin{align}\label{eq:Green}
(2-n)\omega_{n-1}u(y)=&\int_{x\in\mathbb R^n_{\mathbb L}}G(x,y)\Delta u(x)\,dx
\\
&\hspace{0.2cm}+\int_{x\in\mathbb\partial R^n_{\mathbb L},\, x_1=0}G(x,y)\frac{\partial u}{\partial x_1}(x)\,d\sigma(x)\notag
\\
&\hspace{0.4cm}+\int_{x\in\mathbb\partial R^n_{\mathbb L},\, x_n=0}G(x,y)\frac{\partial u}{\partial x_n}(x)\,d\sigma(x),\notag
\end{align}
where $\omega_{n-1}$ is the area of the unit sphere in $\mathbb R^n$.
Since $2-n<\gamma$, we can use the fact that
$$
\int_{x\in\mathbb R^n_{\mathbb L}}|x-y|^{2-n}|x|^{\gamma-2}dx
+\int_{x\in\mathbb\partial R^n_{\mathbb L}}|x-y|^{2-n}|x|^{\gamma-1}d\sigma(x)\leq C(n)|y|^\gamma
$$
for any $y\in\mathbb R^n$, so it follows from \eqref{eq:Green} that 
$$
|y|^{-\gamma}|u(y)|\leq C\|\Delta u\|_{C^0_{\gamma-2}\left(\mathbb R^n_{\mathbb L}\backslash\Gamma\right)}
+C\left\|\frac{\partial u}{\partial x_1}\right\|_{C^0_{\gamma-1}\left(\mathbb\partial R^n_{\mathbb L},\, x_1=0\right)}
+C\left\|\frac{\partial u}{\partial x_n}\right\|_{C^0_{\gamma-1}\left(\mathbb\partial R^n_{\mathbb L},\, x_n=0\right)}
$$
which proves the lemma.
\end{proof}

\begin{lemma}\label{lemma:A2}
	Let $(M,\Sigma_1,\Sigma_2,g)$ be an asymptotically flat manifold, with a non-compact corner $\Gamma$, as defined in Section \ref{sec:application} and $\gamma\in \R$. Then the following holds: \\
	(a) There exists $C=C(M,\Sigma_1,\Sigma_2,g,\gamma)>0$ such that if $u\in C^{1,\alpha}_{\gamma}(M\backslash\Gamma)$, $\Delta_g u\in C^{0,\alpha}_{\gamma-2}(M\backslash\Gamma)$ and $\d u/ \d \eta_i\in C^{1,\alpha}_{\gamma-1}(\Sigma_i\backslash\Gamma)$, $i=1,2$, then $u\in C^{1,\alpha}_{\gamma}(M)\cap C^{2,\alpha}_{\gamma}(M\backslash \Gamma)$ and we have 
	$$\|u\|_{C^{2,\alpha}_{\gamma}(M\backslash \Gamma)}+\|u\|_{C^{1,\alpha}_{\gamma}(M)}
		\leq C\(\|\Delta_g u\|_{C^{0,\alpha}_{\gamma-2}(M\backslash\Gamma)}+\sum_{i=1}^{2}\|\d u/\d \eta_i\|_{C^{1,\alpha}_{\gamma-1}(\Sigma_i\backslash\Gamma)}+\|u\|_{C^{0}_{\gamma}(M)}\).$$ \\
	(b) Assume that $g$ coincides with the Euclidean metric outside a compact set and $2-n<\gamma <0$. Then there exists $C=C(M, \Sigma_1, \Sigma_2, g,\gamma)>0$ and a compact set $K\subset M$ such that for all $u\in C^{1,\alpha}_{\gamma}(M)\cap C^{2,\alpha}_{\gamma}(M\backslash \Gamma)$ 
$$
\|u\|_{C^{0}_{\gamma}(M)}\leq C\(\|\Delta_g u\|_{C^{0}_{\gamma-2}(M\backslash\Gamma)}+\sum_{i=1}^{2}\|\d u/\d \eta_i\|_{C^{0}_{\gamma-1}(\Sigma_i\backslash\Gamma)}+\|u\|_{C^{1}(K)}\).
$$ 
In particular, if $u\in C^{1,\alpha}_{\gamma}(M\backslash\Gamma)$, then $u\in C^{1,\alpha}_{\gamma}(M)\cap C^{2,\alpha}_{\gamma}(M\backslash \Gamma)$ and we have 
$$
\|u\|_{C^{2,\alpha}_{\gamma}(M\backslash \Gamma)}+\|u\|_{C^{1,\alpha}_{\gamma}(M)}\leq C\(\|\Delta_g u\|_{C^{0,\alpha}_{\gamma-2}(M\backslash\Gamma)}+\sum_{i=1}^{2}\|\d u/\d \eta_i\|_{C^{1,\alpha}_{\gamma-1}(\Sigma_i\backslash\Gamma)}+\|u\|_{C^{1}(K)}\).
$$ 
\end{lemma}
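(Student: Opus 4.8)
The plan is to mimic the classical Schauder / weighted-norm machinery of \cite[Appendix A]{almaraz-barbosa-lima}, adapted to the presence of the codimension-two corner $\Gamma$, where the full second-order regularity is not claimed.

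For part (a), I would first establish the interior and boundary Schauder estimates in a scale-invariant form. Cover $M$ by the fixed compact piece and the asymptotic end. On the end, for each large $R$, rescale the annulus $\{R\le |x|\le 2R\}$ to a unit-size region by $x\mapsto x/R$; the rescaled metric is uniformly close (in $C^{1,\alpha}$) to the Euclidean metric, and on the part of the annulus meeting $\Sigma_1$ or $\Sigma_2$ we are in the half-space oblique-derivative setting, while away from $\Gamma$ the corner is not seen. Apply Proposition \ref{propo:exist:cpct} (or the cited \cite{lieberman2, lieberman}) on these model domains to get, on each rescaled region,
$$
\|u\|_{C^{2,\alpha}}+\|u\|_{C^{1,\alpha}}\le C\left(\|\Delta_g u\|_{C^{0,\alpha}}+\sum_{i=1}^2\|\partial u/\partial\eta_i\|_{C^{1,\alpha}}+\|u\|_{C^0}\right),
$$
with $C$ independent of $R$. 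Scaling back and summing the weighted contributions over a dyadic decomposition of the end — taking care that near $\Gamma$ we only claim the $C^{1,\alpha}_\gamma(M)$ bound, which is all that survives the rescaling there — yields the stated global weighted estimate. The compact part is handled directly by Proposition \ref{propo:exist:cpct}.

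For part (b), the point is to absorb the $\|u\|_{C^0_\gamma(M)}$ term using the decay $2-n<\gamma<0$. I would argue by contradiction: if the estimate with $\|u\|_{C^1(K)}$ on the right fails for every $C$ and every compact $K$, one gets a sequence $u_j$ with $\|u_j\|_{C^0_\gamma(M)}=1$ while the right-hand side (with $K$ exhausting $M$) tends to $0$. Using part (a) to get uniform $C^{2,\alpha}_\gamma$ bounds on compacta and Arzelà–Ascoli, pass to a limit $u_\infty$ solving $\Delta_g u_\infty=0$ in $M\setminus\Gamma$ with $\partial u_\infty/\partial\eta_i=0$ on $\Sigma_i\setminus\Gamma$ and $u_\infty=O(|x|^\gamma)$, $u_\infty\to 0$ near $\Gamma$ in the sense that it extends continuously there; since $g$ is Euclidean outside a compact set, the maximum principle — together with the harmonicity reflected across the totally geodesic faces $\{x_1=0\}$ and $\{x_n=0\}$, so that $u_\infty$ becomes a genuine harmonic function on all of $\mathbb R^n$ decaying like $|x|^\gamma$ — forces $u_\infty\equiv 0$, contradicting the normalization (the $C^1(K)\to 0$ input ensures the sup of $|u_\infty|$ is not attained on the compact part either). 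This gives the interpolated estimate; combining it with part (a) (bounding $\|u\|_{C^0_\gamma(M)}$ and then feeding back) yields the final displayed inequality with $\|u\|_{C^1(K)}$ in place of $\|u\|_{C^0_\gamma(M)}$.

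The main obstacle I anticipate is the bookkeeping at the corner $\Gamma$: one must be honest that near $\Gamma$ only $C^{1,\alpha}_\gamma(M)$ regularity is available (not $C^{2,\alpha}$), so the rescaling argument in part (a) and the compactness/limit argument in part (b) must be organized so that the corner contributes only through the weaker norm, while the full $C^{2,\alpha}_\gamma(M\setminus\Gamma)$ bound is obtained only on regions staying a definite distance from $\Gamma$ and then patched. A secondary technical point is verifying that the Green's function comparison of Lemma \ref{lemma:A1} (which is the flat-model input for the corner) interfaces correctly with the oblique-derivative Schauder theory away from the corner; but since both ingredients are already in place in the excerpt, this should be routine.
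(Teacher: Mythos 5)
Your part (a) is essentially the paper's own argument: identify the end with $\{x\in\mathbb R^n_{\mathbb L}\,;\,|x|>1\}$, rescale dyadic annuli $A_R=\{R<|x|<4R\}$ to unit size, apply the corner Schauder estimate of Proposition~\ref{propo:exist:cpct} there (which already encodes the dichotomy of full $C^{2,\alpha}$ away from $\Gamma$ and only $C^{1,\alpha}$ up to $\Gamma$), scale back by $R^{-\gamma}$, and sum; the compact part is treated directly. That is the same decomposition and the same key ingredient the paper uses.

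Part (b), however, you take by compactness/contradiction, and this has a genuine gap. You normalize $\|u_j\|_{C^0_\gamma(M)}=1$ while the right-hand side tends to $0$, extract a locally uniformly convergent subsequence $u_j\to u_\infty$, and show $u_\infty\equiv 0$. But nothing in your argument prevents the weighted supremum $\sup_M r^{-\gamma}|u_j|$ from being achieved at points escaping to infinity; local uniform convergence on compacta then gives no control on that supremum, so $u_\infty\equiv 0$ is not a contradiction. Ruling out this escape of mass is exactly what requires a quantitative a priori estimate far out on the end, and that is the whole purpose of Lemma~\ref{lemma:A1}. The paper (following \cite[Lemma A.2]{almaraz-barbosa-lima}) therefore proves (b) \emph{directly}: since $g$ is Euclidean outside a compact set, one applies the flat-model Green's function estimate of Lemma~\ref{lemma:A1} to $\theta u$, where $\theta$ is a cut-off that is $\equiv 1$ outside a compact set and whose commutators $[\Delta,\theta]u$, $[\partial/\partial\eta_i,\theta]u$ are supported on a compact set and controlled by $\|u\|_{C^1(K)}$; together with part (a) on the compact part this gives the stated inequality outright, no contradiction needed. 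You relegate Lemma~\ref{lemma:A1} to a "secondary technical point"; it is in fact the main engine of part (b). Your reflection observation (doubling $u_\infty$ across the flat totally geodesic faces to get a decaying harmonic function on $\mathbb R^n$) is correct, but only bites after one knows $u_\infty$ is a nontrivial limit, which is precisely what is missing.
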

\begin{proof}
We  first identify $M$ outside a compact set with $\{x\in\mathbb R^n_{\mathbb L}\,;\:|x|>1\}$ by means of asymptotic coordinates $(x_1,...,x_n)$ and set $A=\{x\in M\,;\:1<|x|<4\}$ and $\widetilde A=\{x\in M\,;\:2<|x|<3\}$. Let $0\leq \chi\leq 1$ be a cut-off function satisfying $\chi\equiv 1$ in $\widetilde A$ and $\chi\equiv 0$ in $M\backslash A$. Assume without loss of generality that $\partial\chi/\partial \eta_{g}=0$. 

We set $u_R(x)=u(Rx)$ for $x\in A$ and define a metric $g_R$ on $A$ by 
$(g_R)_{ij}(x)=g_{ij}(Rx)$ in the asymptotic coordinates. 
It follows from elliptic regularity (similar to Proposition \ref{propo:exist:cpct}) that $u\in C_{loc}^{2,\alpha}(M\backslash\Gamma)\cap C_{loc}^{1,\alpha}(M)$ and 
$$
\|\chi u_R\|_{C^{2,\alpha}(A\backslash\Gamma)}+\|\chi u_R\|_{C^{1,\alpha}(A)}\leq
C\|\Delta_{g_R}(\chi u_R)\|_{C^{0,\alpha}(A\backslash\Gamma)}+C\|\partial (\chi u_R)/\partial\eta_{g_R}\|_{C^{1,\alpha}(A\cap((\Sigma_1\cup\Sigma_2)\backslash\Gamma))}
$$
for some $C=C(M,\Sigma_1,\Sigma_2,g)$. Observe that 
$$
\|u_R\|_{C^{1,\alpha}(\widetilde A)}\leq \|\chi u_R\|_{C^{1,\alpha}(A)}, \qquad
\|u_R\|_{C^{2,\alpha}(\widetilde A\backslash\Gamma)}\leq \|\chi u_R\|_{C^{2,\alpha}(A\backslash\Gamma)}, 
$$
$$
\|\Delta_{g_R}(\chi u_R)\|_{C^{0,\alpha}(A\backslash\Gamma)}\leq C\|\Delta_{g_R} u_R\|_{C^{0,\alpha}(A\backslash\Gamma)}+C\|u_R\|_{C^{1,\alpha}(A\backslash\Gamma)}
$$
and
$$
\|\partial (\chi u_R)/\partial\eta_{g_R}\|_{C^{1,\alpha}(A\cap((\Sigma_1\cup\Sigma_2)\backslash\Gamma)))}\leq C\|\partial u_R/\partial\eta_{g_R}\|_{C^{1,\alpha}(A\cap((\Sigma_1\cup\Sigma_2)\backslash\Gamma)))},
$$
so that 
\begin{align*}
\|u_R\|_{C^{2,\alpha}(\widetilde A\backslash\Gamma)}+\|u_R\|_{C^{1,\alpha}(\widetilde A)}
\leq
C\|\Delta_{g_R} u_R\|_{C^{0,\alpha}(A\backslash\Gamma)}&+C\|\partial u_R/\partial\eta_{g_R}\|_{C^{1,\alpha}(A\cap((\Sigma_1\cup\Sigma_2)\backslash\Gamma)))}
\\
&+C\|u_R\|_{C^{1,\alpha}(A\backslash\Gamma)}.
\end{align*}
Expanding this in terms of $C^0$ norms, multiplying by $R^{-\gamma}$ and rewriting the result in terms of $u$ and $g$, we get
\begin{align*}
\|u\|_{C_{\gamma}^{2,\alpha}(\widetilde A_R\backslash\Gamma)}+\|u\|_{C_{\gamma}^{1,\alpha}(\widetilde A_R)}
\leq
C\|\Delta_{g} u\|_{C_{\gamma-2}^{0,\alpha}(A_R\backslash\Gamma)}&+C\|\partial u/\partial\eta_{g}\|_{C_{\gamma-1}^{1,\alpha}(A_R\cap((\Sigma_1\cup\Sigma_2)\backslash\Gamma)))}
\\
&+C\|u\|_{C_{\gamma}^{1,\alpha}(A_R\backslash\Gamma)}.
\end{align*}
The proof now proceeds similarly to \cite[Lemma A.2]{almaraz-barbosa-lima}.
\end{proof}

\begin{lemma}\label{lemma:A3}
	Let $(M,\Sigma_1,\Sigma_2,g)$ be as in Lemma \ref{lemma:A2} and consider the operators $L=\Delta_g+h$ and $B_i=\d/\d\eta_i+h_i$, $i=1,2$, where $h\in C^{0,\alpha}_{-2-\epsilon}(M)$, $h_i\in C^{1,\alpha}_{-1-\epsilon}(\Sigma_i)$, for some $\epsilon>0$ small and $0<\alpha<1$. If $2-n<\gamma<0$, we define by $T(u)=(Lu, B_1u, B_2 u)$ the operator
	$$T:C^{1,\alpha}_{\gamma}(M)\cap C^{2,\alpha}_{\gamma}(M\backslash \Gamma)\rightarrow C^{0,\alpha}_{\gamma-2}(M\backslash \Gamma)\times C^{1,\alpha}_{\gamma-1}(\Sigma_1\backslash\Gamma)\times C^{1,\alpha}_{\gamma-1}(\Sigma_2\backslash\Gamma).$$  
	If $T$ is injective then there holds 
	\begin{equation}
		\|u\|_{C^{2,\alpha}_{\gamma}(M\backslash \Gamma)}+\|u\|_{C^{1,\a}_{\gamma}(M)}\leq C\(\|Lu\|_{C^{0,\alpha}_{\gamma-2}(M\backslash\Gamma)}+\sum_{i=1}^{2}\|B_iu\|_{C^{1,\alpha}_{\gamma-1}(\Sigma_i\backslash\Gamma)}\),
	\end{equation}
for all $u\in C^{1,\alpha}_{\gamma}(M)\cap C^{2,\alpha}_{\gamma}(M\backslash \Gamma)$ and some $C=C(M,g,\gamma,h,h_1, h_2)>0$.
\end{lemma}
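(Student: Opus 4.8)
The plan is to argue by contradiction, combining the local a priori estimate of Lemma~\ref{lemma:A2}(a), the model estimate on the flat quarter-space from Lemma~\ref{lemma:A1}, and the hypothesized injectivity of $T$.

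The first step is a reduction: I would show that it suffices to prove the weaker estimate
\begin{equation*}
\|u\|_{C^0_\gamma(M)}\leq C\(\|Lu\|_{C^{0,\alpha}_{\gamma-2}(M\backslash\Gamma)}+\sum_{i=1}^2\|B_iu\|_{C^{1,\alpha}_{\gamma-1}(\Sigma_i\backslash\Gamma)}\).
\end{equation*}
Indeed, writing $\Delta_g u=Lu-hu$ and $\d u/\d\eta_i=B_iu-h_iu$ and using that $h\in C^{0,\alpha}_{-2-\epsilon}(M)$, $h_i\in C^{1,\alpha}_{-1-\epsilon}(\Sigma_i)$ carry an extra decay factor $r^{-\epsilon}$, one splits $h$ and $h_i$ into a part supported in $\{r>R_0\}$ of arbitrarily small weighted norm (absorbed into the left-hand side) plus a compactly supported part (handled by an interpolation inequality of the form $\|u\|_{C^{0,\alpha}(K)}\leq\theta\|u\|_{C^{1}(K)}+C_\theta\|u\|_{C^0(K)}$ on a fixed compact $K\supset\Gamma$). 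Feeding this into Lemma~\ref{lemma:A2}(a) and absorbing yields
\begin{equation*}
\|u\|_{C^{2,\alpha}_\gamma(M\backslash\Gamma)}+\|u\|_{C^{1,\alpha}_\gamma(M)}\leq C\(\|Lu\|_{C^{0,\alpha}_{\gamma-2}(M\backslash\Gamma)}+\sum_{i=1}^2\|B_iu\|_{C^{1,\alpha}_{\gamma-1}(\Sigma_i\backslash\Gamma)}+\|u\|_{C^0_\gamma(M)}\),
\end{equation*}
so the lemma follows once the displayed $C^0_\gamma$ bound is in hand.

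To prove the $C^0_\gamma$ bound, suppose it fails: there are $u_k\in C^{1,\alpha}_\gamma(M)\cap C^{2,\alpha}_\gamma(M\backslash\Gamma)$ with $\|u_k\|_{C^0_\gamma(M)}=1$ and $\|Lu_k\|_{C^{0,\alpha}_{\gamma-2}}+\sum_i\|B_iu_k\|_{C^{1,\alpha}_{\gamma-1}}\to0$. By the second displayed estimate the $u_k$ are bounded in $C^{1,\alpha}_\gamma(M)\cap C^{2,\alpha}_\gamma(M\backslash\Gamma)$. Choose $x_k\in M$ with $r(x_k)^{-\gamma}|u_k(x_k)|\geq\tfrac12$. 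If $r(x_k)$ stays bounded along a subsequence, then, since on each fixed compact set the weighted norms are equivalent to the usual ones, a diagonal Arzel\`a--Ascoli argument over an exhaustion of $M$ gives a subsequential limit $u\in C^{1,\alpha}_\gamma(M)\cap C^{2,\alpha}_\gamma(M\backslash\Gamma)$ (by lower semicontinuity of the weighted norms) with $Lu=0$ on $M\backslash\Gamma$, $B_iu=0$ on $\Sigma_i\backslash\Gamma$, and $u(x_\infty)\neq0$ at the limit point $x_\infty$; this contradicts the injectivity of $T$. If instead $R_k:=r(x_k)\to\infty$, I would rescale in asymptotic coordinates, setting $\tilde u_k(z)=R_k^{-\gamma}u_k(R_kz)$ and $(\tilde g_k)_{ij}(z)=g_{ij}(R_kz)$ on fixed annular regions in $\mathbb{R}^n_{\mathbb{L}}$; since $g$ is asymptotically flat the $\tilde g_k$ converge to the Euclidean metric in $C^2_{loc}$, the rescaled lower-order coefficients and the rescaled data tend to $0$, and $|\tilde u_k|\leq C|z|^\gamma$ with $|\tilde u_k|\geq\tfrac12$ at a point at distance $\approx1$ from the origin. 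Interior and boundary Schauder estimates then produce a limit $u_\infty$ on $\mathbb{R}^n_{\mathbb{L}}\setminus\{0\}$ solving $\Delta u_\infty=0$, $\d u_\infty/\d x_1=0$ on $\{x_1=0\}$, $\d u_\infty/\d x_n=0$ on $\{x_n=0\}$, with $|u_\infty|\leq C|z|^\gamma$ and $u_\infty\not\equiv0$; since $2-n<\gamma<0$, Lemma~\ref{lemma:A1} forces $u_\infty\equiv0$, a contradiction. This establishes the $C^0_\gamma$ bound, hence the lemma.

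The step I expect to be the main obstacle is the bookkeeping in the first reduction --- checking that the lower-order terms $hu$, $h_iu$ and the transition-metric errors are genuinely absorbable --- together with the care needed near the corner $\Gamma$, where $u$ is only $C^{1,\alpha}$: one must verify that the compactness step and the passage to the limit in the boundary conditions on $\Sigma_1\setminus\Gamma$ and $\Sigma_2\setminus\Gamma$ are legitimate up to (but not on) $\Gamma$, and that the rescaled annuli near infinity either avoid $\Gamma$ or, when they meet it, converge to the flat orthogonal-corner model to which Lemma~\ref{lemma:A1} applies. The rescaling argument itself is the same as in the boundaryless and single-boundary settings and should carry over once Lemma~\ref{lemma:A1} is available.
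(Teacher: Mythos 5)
Your proof is correct in spirit but takes a genuinely different route from the paper. The paper's argument replaces $\Delta_g,\,\partial/\partial\eta_g$ with the corresponding operators for a family of cut-off metrics $g_R=\theta_R g_0+(1-\theta_R)g$ which are Euclidean outside $K_{2R}$; it then invokes the \emph{last} assertion of Lemma~\ref{lemma:A2} (part (b), for metrics Euclidean at infinity) to obtain a bound whose error term is $\|u\|_{C^{1,\alpha}(K_{R'})}$ on a \emph{fixed compact set}, estimates the operator differences $Lu-\Delta_{g_R}u$ and $Bu-\partial u/\partial\eta_{g_R}$ (capturing both the metric perturbation and the lower-order terms $h,h_i$ at once) as a small-in-$R$ multiple plus a compactly supported term, absorbs, and then runs a single injectivity-plus-Arzel\`a--Ascoli contradiction on $K_{R'}$. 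You instead appeal to Lemma~\ref{lemma:A2}(a) for the original metric, absorb $h,h_i$ by a tail/compact split and interpolation, and so end up with a global $\|u\|_{C^0_\gamma(M)}$ error term; this forces a two-case contradiction argument with a genuine blow-down at infinity, rescaling to $\mathbb R^n_{\mathbb L}\setminus\{0\}$ and applying Lemma~\ref{lemma:A1}. Both routes are sound; the paper's avoids the rescaling-at-infinity step entirely because the cut-off metric pins the peak to a fixed compact set, whereas your version uses Lemma~\ref{lemma:A1} directly rather than implicitly through Lemma~\ref{lemma:A2}(b). One point your proposal glosses over: your blow-down limit $u_\infty$ is defined on $\mathbb R^n_{\mathbb L}\setminus\{0\}$ with $|u_\infty|\le C|z|^\gamma$, while Lemma~\ref{lemma:A1} is stated for $u\in C^2_\gamma(\mathbb R^n_{\mathbb L}\backslash\Gamma)\cap C^1_\gamma(\mathbb R^n_{\mathbb L})$, i.e.\ defined across the origin. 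You therefore need a removable-singularity step --- double across the two orthogonal faces to get a harmonic function on a punctured ball in $\mathbb R^n$ with growth $O(|z|^\gamma)$, $\gamma>2-n$, and invoke B\^ocher's theorem --- before Lemma~\ref{lemma:A1} is legitimately applicable. This is standard but should be stated explicitly.
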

\begin{proof}
The proof is similar to \cite[Lemma A.3]{almaraz-barbosa-lima} and we outline it here. As in the proof of Lemma \ref{lemma:A2}, we identify $M$ outside a compact set with $\{x\in\mathbb R^n_{\mathbb L}\,;\:|x|>1\}$ by means of asymptotic coordinates $(x_1,...,x_n)$ and consider the Euclidean metric $g_0$. 
Define a smooth function $r(x)$, $x\in M$, satisfying $r(x)=|x|$ for $|x|>1$, and $0< r(x)\leq 1$ otherwise. For $R\geq 1$, set 
$$
K_R=\{x\in M\,;\:r(x)\leq R\}
$$ 
and consider a smooth cut-off function $0\leq\theta\leq 1$ satisfying $\theta\equiv 1$ in $M\backslash K_{2}$  and $\theta\equiv 0$ in $K_1$. Since the support of $\theta$ is contained in $M\backslash K_1$, it makes sense to define $\theta_R$, 
for $R\geq 1$,  by $\theta_R(x)=\theta(R^{-1}x)$ which has support in $M\backslash K_R$.
Also, define the metric $g_R$ on $M$ by 
$$
g_R=\theta_Rg_0 +(1-\theta_R)g,
$$
so that $g_R=g_0$ in $M\backslash K_{2R}$. 

By the last assertion in  Lemma \ref{lemma:A2} there exists $C>0$ such that
\begin{align}\label{lemma3:2}
\|u\|_{C^{2,\a}_{\gamma}(M\backslash\Gamma)}+\|u\|_{C^{1,\a}_{\gamma}(M)}
\leq
C\|\Delta_{g_R}u\|_{C^{0,\a}_{\gamma-2}(M\backslash\Gamma)}+
C\|\d u/\d\eta_{g_R}\|_{C^{1,\a}_{\gamma-1}(M\backslash\Gamma)}
+C\|u\|_{C^{1}(K_{R'})}
\end{align}
for some large $R'\geq 2R$.
As in \cite[Lemma A.3]{almaraz-barbosa-lima} we  estimate 
\begin{align}\label{lemma3:5}
\|Lu-\Delta_{g_R}u\|_{C^{0,\a}_{\gamma-2}(M\backslash\Gamma)}
&\leq
C\|u\|_{C^{0,\a}(K_{2R}\backslash\Gamma)}+C(R^{-\tau}+R^{-\e})\|u\|_{C^{2,\a}_{\gamma}(M\backslash\Gamma)}
\end{align}
and
\begin{eqnarray}\label{lemma3:8}
\Big\|Bu-\frac{\d u}{\d\eta_{g_R}}\Big\|_{C^{1,\a}_{\gamma-1}((\Sigma_1\cup\Sigma_2)\backslash\Gamma)}
& \leq & 
C\|u\|_{C^{1,\a}(K_{2R}\cap((\Sigma_1\cup\Sigma_2)\backslash\Gamma))} +\\
& & \quad +C(R^{-\tau}+R^{-\e})\|u\|_{C^{2,\a}_{\gamma}((\Sigma_1\cup\Sigma_2)\backslash\Gamma)}\notag
\end{eqnarray}
so that (\ref{lemma3:2}), (\ref{lemma3:5}) and (\ref{lemma3:8}) lead to 
\begin{eqnarray*}
\|u\|_{C^{2,\a}_{\gamma}(M\backslash\Gamma)}+\|u\|_{C^{1,\a}_{\gamma}(M)}
&\leq  &
C\|Lu\|_{C^{0,\a}_{\gamma-2}(M\backslash\Gamma)}+C(R^{-\tau}
+R^{-\e})(\|u\|_{C^{2,\a}_{\gamma}(M\backslash\Gamma)}+\|u\|_{C^{1,\a}_{\gamma}(M)})
\\
& &\quad +C\|Bu\|_{C^{1,\a}_{\gamma-1}((\Sigma_1\cup\Sigma_2)\backslash\Gamma)}+C\|u\|_{C^{1,\a}(K_{R'})}.
\end{eqnarray*}
Hence, if we choose  $R$  large we obtain
\begin{align}\label{lemma3:9}
\|u\|_{C^{2,\a}_{\gamma}(M\backslash\Gamma)}+\|u\|_{C^{1,\a}_{\gamma}(M)}
&\leq
C\|Lu\|_{C^{0,\a}_{\gamma-2}(M\backslash\Gamma)}+C\|Bu\|_{C^{1,\a}_{\gamma-1}((\Sigma_1\cup\Sigma_2)\backslash\Gamma)}+C\|u\|_{C^{1,\a}(K_{R'})}\,.
\end{align}

The rest of the proof of Lemma \ref{lemma:A3} follows by a contradiction argument using the injectivity assumption and (\ref{lemma3:9}).
\end{proof}

\begin{lemma}\label{lemma:A4}
Let $(M,g)$ be an asymptotically flat manifold as in Lemma \ref{lemma:A3}. If $2-n<\gamma<0$ consider the operator 
	$$T:C^{1,\alpha}_{\gamma}(M)\cap C^{2,\alpha}_{\gamma}(M\backslash \Gamma)\rightarrow C^{0,\alpha}_{\gamma-2}(M\backslash \Gamma)\times C^{1,\alpha}_{\gamma-1}(\Sigma_1\backslash \Gamma)\times C^{1,\alpha}_{\gamma-1}(\Sigma_2\backslash \Gamma)$$  
	given by $T(u)=\(\Delta_g u,\d u/ \eta_1, \d u/ \eta_2\)$. If $g$ coincides with the Euclidean metric outside a compact subset of $M$ then $T$ is an isomorphism. 
\end{lemma}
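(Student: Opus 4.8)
The plan is to split the statement into two parts, injectivity and surjectivity of $T$, and to reduce surjectivity to the model problem on $\mathbb{R}^n_{\mathbb L}$ with the Euclidean metric, which the proof of Lemma~\ref{lemma:A1} essentially solves already. Throughout I would apply Lemma~\ref{lemma:A3} with $h\equiv h_1\equiv h_2\equiv 0$ (these are admissible, being trivially in the required weighted spaces), so that the a priori estimate there applies verbatim to the operator in Lemma~\ref{lemma:A4} as soon as injectivity is known.

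For injectivity, suppose $u\in C^{1,\alpha}_{\gamma}(M)\cap C^{2,\alpha}_{\gamma}(M\backslash\Gamma)$ satisfies $\Delta_g u=0$ in $M\backslash\Gamma$ and $\partial u/\partial\eta_i=0$ on $\Sigma_i\backslash\Gamma$, $i=1,2$. Since $\gamma<0$, we have $u(x)\to 0$ as $|x|\to\infty$, so it suffices to rule out a positive interior maximum. In the interior this is the strong maximum principle; along $\Sigma_i\backslash\Gamma$ it is excluded by the Hopf boundary-point lemma together with the homogeneous Neumann condition; and at a point of the corner $\Gamma$ one uses $\Sigma_1\perp\Sigma_2$: in suitable local coordinates the configuration is modelled on a quarter-space, so even reflection successively across the two faces — which is compatible with the Neumann conditions and preserves enough regularity — reduces the corner case to the interior one (equivalently, a convex corner carries no boundary maximum for this mixed problem). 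Hence $\sup_M u\le 0$, and applying this to $-u$ as well gives $u\equiv 0$. The same argument shows that $T$ is injective for every asymptotically flat metric that equals the Euclidean metric outside a compact set.

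For surjectivity I would first record that on $\mathbb{R}^n_{\mathbb L}$ with the Euclidean metric the kernel $G(x,y)$ from the proof of Lemma~\ref{lemma:A1} has vanishing normal derivative on both faces $\{x_1=0\}$ and $\{x_n=0\}$, so it is the Neumann Green's function of the quarter-space; the identity \eqref{eq:Green} then shows that integrating the prescribed volume datum and boundary data against $G$ produces a solution lying in the required weighted spaces, with norm controlled by Lemma~\ref{lemma:A1}. Thus the model operator is an isomorphism. For general $(M,g)$ with $g$ Euclidean outside a compact set $K$, identify the end of $M$ isometrically with the end of $\mathbb{R}^n_{\mathbb L}$; patching the model solution operator on the end with the interior solution operator provided by Proposition~\ref{propo:exist:cpct} (after the standard modification handling its hypothesis, or via the cut-off scheme already used in Lemma~\ref{lemma:A3}) yields an approximate inverse for $T$ whose error is supported in a fixed bounded region and is therefore compact. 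Hence $T$ is Fredholm of index $0$, and together with the injectivity established above this forces $T$ to be an isomorphism. Alternatively one may run the method of continuity along $g_t=(1-t)g_0+tg$, all of which are asymptotically flat and Euclidean outside $K$, using uniform injectivity from the maximum principle and the closed-range property from Lemma~\ref{lemma:A3} to keep the set of $t$ with $T_{g_t}$ an isomorphism both open and closed.

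The main obstacle is the corner $\Gamma$: one must verify that the maximum principle, the elliptic (Schauder-type) regularity needed to land in $C^{2,\alpha}_\gamma(M\backslash\Gamma)\cap C^{1,\alpha}_\gamma(M)$, and the Green's-function estimates all persist across the codimension-two edge where the two Neumann faces meet at a right angle. The orthogonality $\Sigma_1\perp\Sigma_2$ is precisely what makes the local model a quarter-space, so that the reflection trick works and the explicit kernel $G$ is available; away from $\Gamma$ everything is routine boundary elliptic theory as in \cite[Appendix A]{almaraz-barbosa-lima}.
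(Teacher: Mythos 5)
Your overall structure --- injectivity via the maximum principle with even reflection across the two right-angled Neumann faces, and surjectivity via the explicit Green's function on $\mathbb{R}^n_{\mathbb{L}}$ combined with the compact theory of Proposition~\ref{propo:exist:cpct} --- is what the paper intends (its own ``proof'' is a one-line pointer to~\cite[Lemma~A.4]{almaraz-barbosa-lima}), and your injectivity discussion is sound. But the surjectivity half has a genuine gap: a two-sided parametrix modulo compact errors yields that $T$ is Fredholm, not that its index is zero. Asserting ``Fredholm of index~$0$'' requires a separate argument --- for instance constancy of the index over the indicial-root-free interval $(2-n,0)$ combined with formal self-adjointness at the self-dual weight $\gamma=(2-n)/2$, or else a direct surjectivity proof. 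The most economical direct route, given the injectivity you already have and the a priori estimate of Lemma~\ref{lemma:A3}, is to note that the range of $T$ is closed and then show it is dense by testing any annihilating functional against compactly supported data and running the same maximum-principle argument for the formally self-adjoint problem; alternatively one can solve on a compact exhaustion using Proposition~\ref{propo:exist:cpct} and pass to the limit with the uniform weighted bounds from Lemmas~\ref{lemma:A1}--\ref{lemma:A3}.

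Your ``alternatively run the method of continuity along $g_t=(1-t)g_0+tg$'' does not work as stated: the Euclidean metric $g_0$ is only defined on the end of $M$ through the chart at infinity, not on the compact core, so the interpolation is meaningless there and you have no base point. The paper handles the analogous difficulty in Proposition~\ref{prop:isom} by deforming instead through the metrics $g_R=\theta_R g_0+(1-\theta_R)g$, which are Euclidean only outside a compact set --- and it invokes Lemma~\ref{lemma:A4} precisely to know that each such $g_R$-operator is an isomorphism. Appealing to that deformation inside the proof of Lemma~\ref{lemma:A4} itself would be circular.
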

\begin{proof}
The proof is similar to \cite[Lemma A.4]{almaraz-barbosa-lima} making use of Proposition \ref{propo:exist:cpct}.
\end{proof}

\begin{proof}[Proof of Proposition~\ref{prop:isom}]
First observe that all the operators $T$ as in the proposition are injective, as we can see by applying the maximum principle.
Indeed, due to the Neumann boundary condition, a doubling argument along $\Sigma_2$ allows one to use the Hopf lemma for the resulting manifold. 
Let $\widetilde{\mathcal{C}}$ be the set of all these operators and let $\mathcal{C}\subset\widetilde{\mathcal{C}}$ be the subset of isomorphisms. We consider $\widetilde{\mathcal{C}}$ with the operator norm  topology.

It follows from the Implicit Function Theorem that $\mathcal{C}$ is open in $\widetilde{\mathcal{C}}$. We will  prove that it is also closed.
We set $X=C^{0,\a}_{\gamma-2}(M\backslash \Gamma)$,  $Y_i=C^{1,\a}_{\gamma-1}(\Sigma_i\backslash \Gamma)$, $i=1,2$ and consider $X\times Y_1\times Y_2$ with the norm 
$$
\|(f,\bar{f_1},\bar{f_2})\|_{X\times Y_1\times Y_2}=\|f\|_X+\|\bar{f_1}\|_{Y_1}+\|\bar{f_2}\|_{Y_2}.
$$

Let $T_j\in \mathcal{C}$  be a sequence converging to some $T\in\widetilde{\mathcal{C}}$ under the operator norm $\|\,\|_{op}$. We shall prove that $T$ is surjective. 
Given $(f,\bar{f_1},\bar{f_2})\in X\times Y_1\times Y_2$ we must find $u\in C^{1,\alpha}_{\gamma}(M)\cap C^{2,\a}_{\gamma}(M\backslash\Gamma)$ such that $T(u)=(Lu,B_1u,B_2u)=(f,\bar{f_1},\bar{f_2})$. 

Let us write $T_j=(L_j,B_{1,j},B_{2,j})$. By hypothesis, there exists $u_j\in C^{1,\alpha}_{\gamma}(M)\cap C^{2,\a}_{\gamma}(M\backslash\Gamma)$ satisfying $(L_j u_j,B_{1,j}u_j,B_{2,j}u_j)=(f,\bar{f_1},\bar{f_2})$, so that, by Lemma \ref{lemma:A3}, there exists $C>0$ such that
$$
\|u_j\|_{ C^{2,\a}_{\gamma}(M\backslash\Gamma)}+\|u_j\|_{C^{1,\a}_{\gamma}(M)}\leq C\|(f,\bar{f_1},\bar{f_2})\|_{X\times Y_1\times Y_2},
$$
for all $j$.

In particular,  $u_j$ is uniformly bounded in $C^{1,\alpha}_{\gamma}(M)\cap C^{2,\a}_{\gamma}(M\backslash\Gamma)$. 
If we choose $\a_1\in (0,\a)$ and $\gamma_1\in(\gamma, 0)$, it follows from \cite[Lemma 3]{chaljub-choquet} that, by eventually passing to a subsequence, we may assume that $u_j\to u$  in $C^{1,\a_1}_{\gamma_1}(M)\cap C^{2,\a_1}_{\gamma_1}(M\backslash\Gamma)$, for some $u\in C^{1,\alpha}_{\gamma}(M)\cap C^{2,\a}_{\gamma}(M\backslash\Gamma)$. 
We just need to prove that $L_j u_j\to Lu$ in $C^0(M\backslash\Gamma)$ and  $B_{i,j} u_j\to B_iu$ in $C^0(\Sigma_i\backslash \Gamma)$, $i=1,2$, to conclude that $(Lu,B_1u,B_2u)=(f,\bar{f_1},\bar{f_2})$.

Observe that $\|T-T_j\|_{op}\to 0$ implies that $\|L-L_j\|_{op}\to 0$ and $\|B-B_{i,j}\|_{op}\to 0$, $i=1,2$. We also have
\begin{align}\label{propo1:1}
\|L_j u_j-Lu\|_{C^0(M\backslash\Gamma)}\leq \|L(u_j-u)\|_{C^0(M\backslash\Gamma)}+\|(L_j-L) u_j\|_{C^0(M\backslash\Gamma)}\,.
\end{align}
The first term on the right-hand side of (\ref{propo1:1}) converges to zero because $u_j\to u$ in $C^{1,\alpha_1}_{\gamma_1}(M)\cap C^{2,\a_1}_{\gamma_1}(M\backslash\Gamma)$. As for the  second one, 
$$
\|(L_j-L) u_j\|_{C^0(M\backslash\Gamma)}
\leq 
\|(L_j-L) u_j\|_{C^{0,\a}_{\gamma-2}(M\backslash\Gamma)}
\leq \|L_j-L\|_{op}\|u_j\|_{C^{1,\alpha}_{\gamma}(M)\cap C^{2,\a}_{\gamma}(M\backslash\Gamma)}\to 0\,,
$$
since $\|u_j\|_{C^{1,\alpha}_{\gamma}(M)\cap C^{2,\a}_{\gamma}(M\backslash\Gamma)}$ is uniformly bounded. This proves that $\|L_j u_j-Lu\|_{C^0(M\backslash\Gamma)}\to 0$. The proof that $\|B_{i,j} u_j-B_iu\|_{C^0(\Sigma_i\backslash \Gamma)}\to 0$, $i=1,2$, is similar, which  proves that $\mathcal{C}$ is closed in $\widetilde{\mathcal{C}}$.

Finally, we need to prove that $\widetilde{\mathcal{C}}$ is connected and contains an isomorphism. Using the notation in the proof of Lemma \ref{lemma:A3}, we consider the family of metrics $g_R$ for $R\geq 1$, and observe that the operators of the form $(-\Delta_{g_R}, \d/\d \eta_{1,g_R}, \d/\d \eta_{2,g_R})$ are isomorphisms, according to Lemma \ref{lemma:A4}. We set $\displaystyle L^t=-\Delta_{g_{(1-t)^{-1}}}+th$, $B_1^t=\d/\d\eta_{1,g_{(1-t)^{-1}}}+t\bar{h_1}$,  $B_2^t=\d/\d\eta_{2,g_{(1-t)^{-1}}}$, and define $T^t=(L^t,B^t_1,B^t_2)$ for $t\in [0,1)$ and $T^1=T$. Then $\{T^t\}_{t\in [0,1]}$ is a continuous family  of operators in $\widetilde{\mathcal{C}}$ connecting $(-\Delta_{g_1}, \d/\d \eta_{1,g_1},\d/\d \eta_{2,g_1})$ to $T$. This finishes the proof of Proposition \ref{prop:isom}.


\end{proof}


\bigskip\noindent
{\bf{Acknowledgement.}} Part of this work was carried out during the first author's visit to Nanjing University of Science and Technology in the summer of 2024. He is very grateful to the hospitality of the School of Mathematics and Statistics. The authors would want to also thank Huiyan for the nice figures.

\bigskip\noindent
\textsc{S\'ergio Almaraz\\
Instituto de Matem\'atica e Estat\' istica, \\
Universidade Federal Fluminense\\
Rua Prof. Marcos Waldemar de Freitas S/N,
Niter\'oi, RJ,  24.210-201, Brazil}\\
e-mail: {\bf{sergioalmaraz@id.uff.br}}

\bigskip\noindent
\textsc{Shaodong Wang\\
School of Mathematics and Statistics,\\
Nanjing University of Science and Technology\\
Nanjing, 210094, People’s Republic of China} \\
e-mail: {\bf{shaodong.wang@mail.mcgill.ca}}


\begin{thebibliography}{}
\bibitem{almaraz-barbosa-lima}
S\'{e}rgio Almaraz, Ezequiel Barbosa, and Levi~Lopes de~Lima.
\newblock A positive mass theorem for asymptotically flat manifolds with a
non-compact boundary.
\newblock {\em Comm. Anal. Geom.}, 24(4):673--715, 2016.


\bibitem{almaraz-lima-mari}
S\'ergio Almaraz, Levi~Lopes de~Lima, and Luciano Mari.
\newblock Spacetime positive mass theorems for initial data sets with
non-compact boundary.
\newblock {\em Int. Math. Res. Not. IMRN}, (4):2783--2841, 2021.

\bibitem{almaraz-lima-mckeown}
S\'ergio Almaraz, Levi Lopes de Lima, and Stephen McKeown. The mass of asymptotically flat manifolds with non-compact corners. In preparation.

\bibitem{almaraz-sun}
S\'{e}rgio Almaraz and Liming Sun.
\newblock Convergence of the {Y}amabe flow on manifolds with minimal boundary.
\newblock {\em Ann. Sc. Norm. Super. Pisa Cl. Sci. (5)}, 20(3):1197--1272,
2020.

\bibitem{arnowitt-deser-misner}
R.~Arnowitt, S.~Deser, and C.~W. Misner.
\newblock The dynamics of general relativity.
\newblock In {\em Gravitation: {A}n introduction to current research}, pages
227--265. Wiley, New York-London, 1962.

\bibitem{bartnik}
Robert Bartnik.
\newblock The mass of an asymptotically flat manifold.
\newblock {\em Comm. Pure Appl. Math.}, 39(5):661--693, 1986.

\bibitem{chaljub-choquet}
Alice Chaljub-Simon and Yvonne Choquet-Bruhat.
\newblock Probl\`emes elliptiques du second ordre sur une vari\'et\'e{}
euclidienne \`a{} l'infini.
\newblock {\em Ann. Fac. Sci. Toulouse Math. (5)}, 1(1):9--25, 1979.

\bibitem{escobar2}
Jos\'{e}~F. Escobar.
\newblock Conformal deformation of a {R}iemannian metric to a scalar flat
metric with constant mean curvature on the boundary.
\newblock {\em Ann. of Math. (2)}, 136(1):1--50, 1992.

\bibitem{escobar3}
Jos\'{e}~F. Escobar.
\newblock The {Y}amabe problem on manifolds with boundary.
\newblock {\em J. Differential Geom.}, 35(1):21--84, 1992.

\bibitem{lee-lefloch}
Dan~A. Lee and Philippe~G. LeFloch.
\newblock The positive mass theorem for manifolds with distributional
curvature.
\newblock {\em Comm. Math. Phys.}, 339(1):99--120, 2015.

\bibitem{lee-parker}
John~M. Lee and Thomas~H. Parker.
\newblock The {Y}amabe problem.
\newblock {\em Bull. Amer. Math. Soc. (N.S.)}, 17(1):37--91, 1987.

\bibitem{lieberman}
Gary~M. Lieberman.
\newblock Intermediate {S}chauder estimates for oblique derivative problems.
\newblock {\em Arch. Rational Mech. Anal.}, 93(2):129--134, 1986.

\bibitem{lieberman2}
Gary~M. Lieberman.
\newblock Oblique derivative problems in {L}ipschitz domains. {II}.
{D}iscontinuous boundary data.
\newblock {\em J. Reine Angew. Math.}, 389:1--21, 1988.

\bibitem{mckeown}
Stephen~E. McKeown.
\newblock Exponential map and normal form for cornered asymptotically
hyperbolic metrics.
\newblock {\em Trans. Amer. Math. Soc.}, 372(6):4391--4424, 2019.

\bibitem{miao}
Pengzi Miao.
\newblock Positive mass theorem on manifolds admitting corners along a
hypersurface.
\newblock {\em Adv. Theor. Math. Phys.}, 6(6):1163--1182, 2002.

\bibitem{schoen1}
Richard Schoen.
\newblock Conformal deformation of a {R}iemannian metric to constant scalar
curvature.
\newblock {\em J. Differential Geom.}, 20(2):479--495, 1984.

\bibitem{schoen2}
Richard~M. Schoen.
\newblock Variational theory for the total scalar curvature functional for
{R}iemannian metrics and related topics.
\newblock In {\em Topics in calculus of variations ({M}ontecatini {T}erme,
	1987)}, volume 1365 of {\em Lecture Notes in Math.}, pages 120--154.
Springer, Berlin, 1989.

\bibitem{schoen-yau}
Richard Schoen and Shing~Tung Yau.
\newblock On the proof of the positive mass conjecture in general relativity.
\newblock {\em Comm. Math. Phys.}, 65(1):45--76, 1979.

\bibitem{schoen-yau3}
Richard Schoen and Shing-Tung Yau.
\newblock Positive scalar curvature and minimal hypersurface singularities.
\newblock In {\em Surveys in differential geometry 2019. {D}ifferential
	geometry, {C}alabi-{Y}au theory, and general relativity. {P}art 2}, volume~24
of {\em Surv. Differ. Geom.}, pages 441--480. Int. Press, Boston, MA, [2022]
\copyright 2022.

\bibitem{witten}
Edward Witten.
\newblock A new proof of the positive energy theorem.
\newblock {\em Comm. Math. Phys.}, 80(3):381--402, 1981.


\end{thebibliography}
\end{document}